\definecolor {processblue}{cmyk}{0.96,0,0,0}
\numberwithin{equation}{section}
\newcommand\undermat[2]{
  \makebox[0pt][l]{$\smash{\underbrace{\phantom{%
    \begin{matrix}#2\end{matrix}}}_{\text{$#1$}}}$}#2}
\newtheorem{theorem}{Theorem}[section]
\newtheorem{proposition}[theorem]{Proposition}
\newtheorem{lemma}[theorem]{Lemma}
\newtheorem{conjecture}[theorem]{Conjecture}
\newtheorem{corollary}[theorem]{Corollary}
\newtheorem{definition}[theorem]{Definition}
\theoremstyle{remark}
\newtheorem{example}[theorem]{Example}
\theoremstyle{remark}
\newtheorem{remark}[theorem]{Remark}
\DeclareMathOperator{\Proj}{Proj}
\DeclareMathOperator{\cohom}{H}
\newcommand{\PP}{\mathbb{P}}
\newcommand{\Z}{\mathbb{Z}}
\newcommand{\C}{\mathcal{C}}
\newcommand{\Om}{\Omega}
\newcommand{\om}{\omega}
\newcommand{\ka}{\kappa}
\newcommand{\g}{\gamma}
\newcommand{\al}{\alpha}
\newcommand{\be}{\beta}
\newcommand{\Span}{\text{span}}
\newcommand{\im}{\text{im}}
\newcommand{\rank}{\text{rank}}
\newcommand{\rk}{\text{rk}}
\newcommand{\ord}{\text{ord}}
\newcommand{\steven}[1]{\todo[color=green!60]{SG: #1}}
\title{$a$-numbers of cyclic degree $p^2$ covers of the projective line}
\author[Dang]{Huy Dang}
\address[Dang]{National Center for Theoretical Sciences, Mathematics Division, No. 1, Sec. 4, Roosevelt Rd., Taipei City 106, Taiwan Room 503, Cosmology Building, National Taiwan University}
\email{huydang1130@ncts.ntu.edu.tw}
\author[Groen]{Steven R. Groen}
\address[Groen]{Department of Mathematics, 
         Lehigh University, 17 Memorial Drive E, 
         Bethlehem, Pennsylvania 18018}
\email{stg323@lehigh.edu}
\date{September 2023}
\pgfplotsset{compat=1.18}
\begin{document}

\begin{abstract}
We investigate the $a$-numbers of $\mathbb{Z}/p^2\mathbb{Z}$-covers in characteristic $p>2$ and extend a technique originally introduced by Farnell and Pries for $\mathbb{Z}/p\mathbb{Z}$-covers. As an application of our approach, we demonstrate that the $a$-numbers of ``minimal'' $\mathbb{Z}/9\mathbb{Z}$-covers can be deduced from the associated branching datum.
\end{abstract}

\maketitle

\section{Introduction}

Consider an algebraically closed field $k$ of characteristic $p > 2$, and let $\pi: Y \rightarrow X$ be a Galois branched cover of smooth, projective, connected curves. A prevalent research direction is to investigate to what extent invariants of $Y$ are determined by the branching datum of the cover $\pi$. See Section~\ref{secASWcovers} for a precise definition of the branching datum. For instance, if $Y$ is an elliptic curve, then the branching datum of $\pi$ does not determine whether $Y$ is ordinary or supersingular.

Of particular interest is the case when $p$ divides $ \lvert G \rvert$, i.e. when the cover is wildly ramified. Then the ramification of $\pi$ is more complicated, due to the additional structure of the filtration of higher ramification groups. In this paper we restrict to the case when $G$ is a cyclic $p$-group, and we write $G = \Z/p^n\Z$.

Some well-known invariants for curves in characteristic $p$ include the genus, $p$-rank, and $a$-number. Of particular interest to us is the $a$-number, which is defined by 
$$a_Y := \text{dim} \left( \text{ker}\left( \C: \cohom^0(Y,\Om_Y^1) \to \cohom^0(Y,\Om_Y^1) \right) \right),$$ 
where $\C$ is the $p^{-1}$-linear \emph{Cartier operator}. The size of $a_Y$ measures the failure of a curve to be \emph{ordinary}. The genus of $Y$ is determined by the branching datum of $\pi$ via the Riemann-Hurwitz formula \cite[IV. Corollary 2.4]{Hartshorne}. Similarly, since $G$ is a $p$-group, the Deuring-Shafarevich formula \cite[Theorem 4.1]{SubraoDS} dictates what the $p$-rank of $Y$ is in terms of the branching datum of $\pi$. In contrast, the $a$-number of $Y$ is not determined by the branching datum of $\pi$. Understanding the possibilities for the $a$-number of a curve has implications in the Schottky problem, as it helps illuminate which Abelian varieties are isomorphic to the Jacobian of a curve.

In recent years, there has been a growing interest in studying which $a$-numbers can occur for certain classes of curves. See, for instance, \cite{Voloch1988}, \cite{Johnston}, \cite{Elkin07hyperellipticcurves}, \cite{ELKIN20111}, \cite{Prieshyp2}, \cite{MR3090462}, \cite{Friedlander_2013}, \cite{Frei}, \cite{MPWSuzuki}, \cite{MR4113776}, \cite{booher_cais_2023}, \cite{2024arXiv240713966B} and \cite{booher2024higheranumbersmathbfzptowerscounting}. 

There has been particular interest in cyclic covers of $p$-power degree in characteristic $p$. The authors of \cite{MR3090462} focused on the special case where $X=\mathbb{P}^1$ and $G=\mathbb{Z}/p\mathbb{Z}$. They derived a formula for the $a$-number of $Y$ when the ramification jumps divide $p-1$. In particular, in this case the $a$-number is determined by the branching datum of $\pi$. Booher and Cais expanded the study in \cite{MR4113776} to a general base curve $X$ and established bounds for $a_Y$ based on $a_X$ and the branching datum of $\pi$. In \cite{booher_cais_2023}, the authors consider the (higher) $a$-numbers in towers of $\Z/p^n\Z$-covers of a base curve $X$, which in key cases is the projective line. In particular, they formulate far-reaching conjectures for the behavior of the $a$-numbers as a function of $n$ and provide theoretical and experimental evidence for these conjectures.

In recent works \cite{2024arXiv240713966B} and \cite{booher2024higheranumbersmathbfzptowerscounting}, parallel research on the $a$-numbers of $\mathbb{Z}/p^n\Z$-covers is conducted using a $p$-adic analytic approach, which differs from the method employed in this paper. Their method yields strong general results, but is limited to covers branched at a single point.

In this paper, we investigate covers of $\PP^1$ with Galois group $G=\Z/p^2\Z$ whose conductors are ``minimal'' (see Definition \ref{defnminimalcyclic}). This is analogous to the condition that the conductors divide $p-1$, as assumed in \cite{MR3090462}. Conjecture~\ref{conjnumberminimal} predicts that the $a$-number of a minimal $\Z/p^n\Z$-cover of $\mathbb{P}^1$ is determined by its branching datum. Extending the approach in \cite{MR3090462}, we assign to each basis differential $\omega$ of $\cohom^0(Y, \Om_Y^1)$ a \emph{key term} $\kappa(\C(\om))$, which is non-zero in $\C(\om)$. Using this, we prove a conditional lower bound for the rank of the Cartier operator, which is equivalent to giving an upper bound on the $a$-number. In this theorem $c_\omega$ is an element of $k$ associated to $\omega$ via the formula in Lemma~\ref{lemktcom}.

\begin{theorem} 
(Theorem~\ref{thmrkCgeqK}) Let $K$ be the set of key terms and let $H$ be the set of basis differentials that have a key term. Assume that for each $\om' \in K$, there is an $\om\in H$ such that $\ka(\C(\om))=\om'$ and $c_\om \neq 0$. Then we have $\rk(\C) \geq \#K$.
\end{theorem}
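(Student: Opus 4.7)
The plan is to exhibit $\#K$ linearly independent elements in the image of $\C$, which immediately yields $\rk(\C) \geq \#K$. For each $\om' \in K$ the hypothesis supplies a preimage $\om_{\om'} \in H$ with $\ka(\C(\om_{\om'})) = \om'$ and $c_{\om_{\om'}} \neq 0$; I would collect these into the family $\mathcal{F} = \{\C(\om_{\om'}) : \om' \in K\}$ of cardinality $\#K$ (the targets $\om'$ are distinct elements of $K$, which guarantees the assignment is injective). The problem then reduces to showing that $\mathcal{F}$ is linearly independent in $\cohom^0(Y,\Om_Y^1)$.

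To prove independence I would expand each $\C(\om_{\om'})$ in the chosen basis of $\cohom^0(Y,\Om_Y^1)$ and form the square matrix $M$ indexed by $K \times K$, whose $(\om',\om'')$-entry is the coefficient of $\om''$ in $\C(\om_{\om'})$. By construction of the key term, and in view of Lemma~\ref{lemktcom}, the diagonal entry $M_{\om',\om'}$ is precisely $c_{\om_{\om'}}$, which is non-zero by assumption. It therefore suffices to verify that $M$ is non-singular, for then the rows of $M$ — hence the elements of $\mathcal{F}$ — are linearly independent.

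The natural route is to exhibit a triangular structure. I would equip the basis differentials with the ordering inherited from how $\ka$ is defined (most likely by the exponent of the chosen local parameter appearing in the basis description), so that the key term $\ka(\C(\om))$ is an extremal non-vanishing component of $\C(\om)$. Under such an ordering, whenever $\om'' \in K$ is strictly larger than $\om'$ in the ordering, $\om''$ does not appear in $\C(\om_{\om'})$, so $M$ becomes triangular with the non-zero diagonal already identified, giving $\det M \neq 0$.

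The main obstacle will be pinning down this ordering and verifying the extremality property of $\ka$, since $\ka$ is defined as a particular (non-zero) term of $\C(\om)$ rather than tautologically as a maximum, so the triangularity claim ties directly into the combinatorial description of the Cartier action on the basis of $\cohom^0(Y,\Om_Y^1)$ for a $\Z/p^2\Z$-cover. If a clean triangularity is unavailable, the fallback is to argue directly on a hypothetical vanishing relation $\sum_{\om' \in K} a_{\om'}\, \C(\om_{\om'}) = 0$: choose $\om^* \in K$ maximal among those with $a_{\om^*} \neq 0$, and show that the $\om^*$-component of the left-hand side reduces to $a_{\om^*}\, c_{\om_{\om^*}}$, which is non-zero — the desired contradiction.
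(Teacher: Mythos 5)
Your strategy is the paper's strategy: pick one representative $\om_{\om'}\in H$ per key term $\om'\in K$ with $c_{\om_{\om'}}\neq 0$, form the $K\times K$ minor of the Cartier--Manin matrix, and show it is triangular with diagonal entries $c_{\om_{\om'}}$ (Lemma~\ref{lemktcom}, and Lemma~\ref{lemcompoleh} for $P\in B_2\setminus B_1$). So the approach is essentially identical; what your sketch leaves open is exactly the content of the lemmas the paper proves beforehand, and the way you propose to fill it would not quite work as stated.

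Two concrete points. First, the ordering is not ``by the exponent of the chosen local parameter,'' and the needed property is not that $\ka(\C(\om))$ is an extremal term of $\C(\om)$ itself. The paper's order $\prec$ (Definition~\ref{defprec}) compares the \emph{source} differentials via weighted exponent data such as $v+d_P(a_1+pa_2)$ (resp.\ $v+e_Pa_2$), with branch-point and tie-breaking clauses, and the triangularity comes from Lemma~\ref{lemktzero}: if $\om'\prec\om$, then the coefficient of $\ka(\C(\om))$ vanishes in $\C(\om')$ --- a cross-term statement proved case by case in Lemmas~\ref{lemktzeroPneqQ1}--\ref{lemktzeroP=Qh3}, and the real work behind the theorem. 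Second, your fallback (pick $\om^*$ maximal with $a_{\om^*}\neq 0$) silently assumes the order is total on the chosen representatives; $\prec$ is only a partial order on $W$, and two differentials are incomparable precisely when they share a key term (Lemma~\ref{lemsamekeyterms}, Remark~\ref{rmkincomparableiffsamekeyterm}). This is exactly why selecting one representative per key term makes $\prec$ linear on that subset, which the paper invokes and your argument needs to make the maximal-element step valid. With those ingredients cited, your outline becomes the paper's proof; without them, the triangularity claim is unsupported.
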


If we furthermore assume $p=3$, we prove this upper bound is sharp, resulting in the following theorem.

\begin{theorem} \label{thmp=3intro}
(Theorem~\ref{thmp=3}) The $a$-number of a minimal $\mathbb{Z}/9\mathbb{Z}$-cover in characteristic $3$ is determined by its branching datum.
\end{theorem}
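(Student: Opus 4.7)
The plan is to combine Theorem \ref{thmrkCgeqK}, which provides a conditional lower bound on $\rk(\C)$, with a matching upper bound specialized to $p = 3$. Since $a_Y = g_Y - \rk(\C)$ and the genus $g_Y$ of a $\Z/9\Z$-cover of $\PP^1$ is determined by the branching datum via the Riemann--Hurwitz formula, it suffices to show that $\rk(\C)$ is also determined by the branching datum.

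First I would verify the hypothesis of Theorem \ref{thmrkCgeqK} when $p = 3$: namely, for each key term $\om' \in K$ there exists $\om \in H$ with $\ka(\C(\om)) = \om'$ and $c_\om \neq 0$. By Lemma \ref{lemktcom}, $c_\om$ has an explicit form, and in the minimal $\Z/9\Z$ case the possible shapes of basis differentials are sufficiently restricted that this formula can be analyzed directly. Most likely one checks that $c_\om$ is a product of binomial coefficients modulo $3$ whose nonvanishing is forced by the minimality hypothesis on the conductors. This step gives $\rk(\C) \geq \#K$.

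Next I would prove the matching upper bound $\rk(\C) \leq \#K$. The natural approach is to show that basis differentials $\om \notin H$ lie in $\ker(\C)$, and that the assignment $\om \mapsto \ka(\C(\om))$ from $H$ to $K$ produces at most $\#K$ linearly independent images, exploiting the triangular structure guaranteed by the definition of a key term. Combined with the lower bound this forces $\rk(\C) = \#K$. Since $K$ is constructed combinatorially from the explicit basis of $\cohom^0(Y, \Om_Y^1)$ associated to the branching datum, $\#K$ depends only on that datum, and hence $a_Y = g_Y - \#K$ is determined by the branching datum.

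The main obstacle will be the nonvanishing of $c_\om$ in characteristic $3$. Since the analogous statement for general $p$ is only Conjecture \ref{conjnumberminimal}, the proof must exploit features specific to $p = 3$; my expectation is that the narrow range of exponents appearing in the Artin--Schreier--Witt equations of a minimal $\Z/9\Z$-cover reduces the nonvanishing check to a finite collection of explicit binomial identities in $\mathbb{F}_3$. A secondary technical point will be showing that differentials outside $H$ genuinely lie in $\ker(\C)$ rather than merely having small image, which likely requires a careful bookkeeping of the exponents of the local parameters involved in the two-level ASW presentation of the cover.
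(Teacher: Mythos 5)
Your lower-bound step coincides with the paper's: for $p=3$ the hypothesis of Theorem~\ref{thmrkCgeqK} is automatic, since $\be_P(v)\in\{0,1,2\}$ means either $\be_P(v)=0\leq a_1$ or $\be_P(v)\geq p-2$, and Lemma~\ref{lemcomnon-zero} then gives $c_\om\neq 0$. So the nonvanishing you single out as ``the main obstacle'' is already settled in Section~\ref{Sec:Keyterms} and requires no new binomial identities special to characteristic $3$.

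The genuine gap is in your upper bound. You propose to show that basis differentials outside $H$ lie in $\ker(\C)$ and that the triangular structure forces $\rk(\C)\leq \#K$; neither claim is correct. Differentials without a key term are in general not killed: already in the case $d_\infty=1$ one has $\C(y_1\,dx)=-\sum_{Q}\C(f_Q(x_Q)\,dx)$, a typically non-zero element of $\Span_k\{x_Q\,dx\}$, and similarly $\C(y_1x_P^2\,dx)$, $\C(y_1^2x_P^2\,dx)$, $\C(y_2x_P^2\,dx)$, etc.\ are non-zero. The triangularity of Lemma~\ref{lemktzero} only yields $\rk(\C)\geq\#K$; it cannot give the reverse inequality, because $\im(\C)$ is \emph{not} contained in $\Span_k K$ --- Remarks~\ref{rmk:imagenotspan} and~\ref{rmk:p=3imagenotkeyterm} show both inclusions between $\im(\C)$ and $\Span_k K$ fail even for $p=3$. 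Moreover $\ka$ is not injective (Remark~\ref{rmkincomparableiffsamekeyterm}), so you must also explain why two differentials sharing a key term contribute only $1$ to the rank. The paper's proof of Theorem~\ref{thmp=3} replaces your kernel claim with a case-by-case containment argument (Sections~\ref{subsec:P=infd=1} through~\ref{subsec:eP=2} and the proof itself): for each branch-point type it computes $\C$ of every differential lacking a key term and of each member of a duplicated-key-term pair, and verifies that these images already lie in the span of images of other, previously accounted-for differentials (using identities such as $\C(x_Q\,dx)=x_Q\,dx$ and $\C(x_Q^4\,dx)=x_Q^2\,dx$ when $d_Q=2$). That bookkeeping of image containments, not kernel membership or an upper-triangular bound, is what pins $\rk(\C)$ to exactly $\#K$ and yields $a_{Y_2}=3n_1+7n_2+3n_4$.
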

See Theorem \ref{thmp=3} for a precise formula of $a_{Y_2}$ in terms of the branching datum.

\subsection{Outline}
The rest of the paper is organized as follows. In Section \ref{secbackground}, we provide background information on Artin-Schreier-Witt theory and the ramification of cyclic covers of the projective line in characteristic $p$. Additionally, we review the Cartier-Manin matrix and its relationship with the $a$-number of a curve. We also discuss the motivation behind our question and make comparisons with existing results in this section.

Section \ref{Sec:Keyterms} introduces the concept of key terms for $\mathbb{Z}/p^2\mathbb{Z}$-covers. The key terms represent leading entries of the Cartier-Manin matrix, generalizing the concept introduced in \cite{MR3090462} for $\mathbb{Z}/p\mathbb{Z}$-covers. It is shown in Theorem~\ref{thmrkCgeqK} that, under certain conditions, the number of key terms is a lower bound or the rank of the Cartier-Manin matrix, as differentials with a key term contribute to the rank. This yields a lower bound on the $a$-number.

Finally, in Section \ref{sec:p=3}, we demonstrate that the provided upper bound is sharp when $p=3$, by showing the remaining differentials do not contribute to the rank of the Cartier operator. This results in Theorem~\ref{thmp=3intro}.

\subsection{Comparison to Farnell-Pries}
As mentioned, in \cite{MR3090462} a formula is proved for the $a$-number of $\mathbb{Z}/p\mathbb{Z}$-covers $X \to \mathbb{P}^1$, assuming all the conductors divide $p-1$. In order to do so, a subset $H$ of the basis $W$ of the space of regular differentials is constructed. To differentials $\omega \in H$ a \emph{key term} $\kappa(\C(\om))$ is assigned. Moreover, Farnell and Pries define an order $\prec$ on $W$. They prove the following properties:
\begin{enumerate}
    \item The map $\kappa$ is injective. \label{item:inj}
    \item The coefficient of $\kappa(\C(\om))$ is non-zero in $\C(\om)$. \label{item:non-0}
    \item If $\om' \prec \om$, then the coefficient of $\kappa(\C(\om))$ is zero in $\C(\om')$. \label{item:0}
    \item $\C(\cohom^0(X,\Omega_X^1)) = \text{span}_k \{ \C(H) \}$. \label{item:surj}
\end{enumerate}
Together, these imply that the rank of $\C$ equals the cardinality of $H$.

The goal of the current paper is to extend this technique to $\mathbb{Z}/p^2\mathbb{Z}$-covers $Y_2 \to Y_1 \to \mathbb{P}^1$. The appropriately modified key terms are introduced in Definition~\ref{defkeyterm}. The analysis at a branch point $P$ works differently depending on whether $Y_1 \to \mathbb{P}^1$ is branched at $P$. In this context, \eqref{item:inj} no longer holds; it is possible for two differentials to have the same key term (see Remark~\ref{rmkincomparableiffsamekeyterm}). This is not a major issue, as one counts the cardinality of $K=\kappa(H)$ rather than $H$. As for \eqref{item:non-0}, a formula for the coefficient of $\kappa(\C(\om))$ is given in Lemma~\ref{lemktcom}. Although we suspect this is never zero, we are only able to prove this in specific cases, which is recorded in Lemma~\ref{lemcomnon-zero}. The analogue of \eqref{item:0} is proved in Lemma~\ref{lemktzero}. The differentials that have the same key term are incomparable with respect to the order $\prec$. These together suffice for the conditional upper bound on $a_{Y_2}$ that is proved in Theorem~\ref{thmrkCgeqK}. The condition of this upper bound is immediately satisfied if $p=3$. It is shown in Theorem~\ref{thmp=3} that the upper bound is sharp, meaning that \eqref{item:surj} is also satisfied when $p=3$.

\subsection{Acknowledgements}
The authors thank Jeremy Booher, Bryden Cais, Joe Kramer-Miller,  Rachel Pries, and Damiano Testa for helpful discussions. We would also like to thank the anonymous referees for useful suggestions.

\section{Background}
\label{secbackground}

\subsection{Artin-Schreier-Witt theory}
\label{secASWcovers}
According to the Artin-Schreier-Witt theory (as discussed in references such as \cite{MR1878556}), for any field $K$ with characteristic $p$, the following group isomorphism holds:
\begin{equation*}
    \cohom^1(G_K, \mathbb{Z}/p^n\mathbb{Z}) \cong W_n(K)/\wp (W_n(K)).
\end{equation*}
In the above equation, $G_K$ represents the absolute Galois group of $K$, $W_n(K)$ denotes the ring of length-$n$ Witt vectors over $K$, and $\wp(\underline{x}) = F(\underline{x}) - \underline{x}$ corresponds to the Artin-Schreier-Witt isogeny. We say two Witt vectors $\underline{f}$ and $\underline{g}$ in $W_n(K)$ belong to the same Artin-Schreier-Witt class if there exists a Witt vector $\underline{h} \in W_n(K)$ such that $\underline{f} = \underline{g} + \wp(\underline{h})$.

Therefore, any $\mathbb{Z}/p^n\mathbb{Z}$-cover $\phi_n: Y_n \xrightarrow{} \mathbb{P}^1$ can be represented by an Artin-Schreier-Witt equation of the form
        \begin{equation*}
            \wp(y_1, \ldots, y_n)=(f_1, \ldots, f_n),
        \end{equation*}
where $(f_1, \ldots, f_n) \in W_n(k(x))$ and $f_1 \not\in \wp(k(x))$. 
\begin{remark}
    For $1 \le i<n$, the unique $\mathbb{Z}/p^i$-sub-cover $\phi_i: Y_i \rightarrow \mathbb{P}^1$ of $\phi_n$ is given by the Artin-Schreier-Witt equation:
\begin{equation*}
    \wp(y_1, \ldots, y_i)=(f_1, \ldots, f_i).
\end{equation*}
\end{remark}

Alternatively, the cover $\phi_n$ can be expressed as a series of Artin-Schreier extensions, which is advantageous for computational purposes, as demonstrated later in this paper.
\begin{proposition}[{\cite{MR2577662}}]
\label{proptowerAS}
The $\mathbb{Z}/p^n\mathbb{Z}$-cover associated with $(f_1, \ldots, f_n) \in W_n(K)$ can be represented by the following system of equations: 
    \begin{equation}
    \label{eqntowerAS}
        \begin{split}
            y_1^p-y_1 & = f_1 \\
            y_2^p-y_2 & =g_1(y_1)+f_2 \\
            & \vdots \\
            y_n^p-y_n& = g_{n-1}(y_1, \ldots, y_{n-1}) + f_n,
        \end{split}
    \end{equation}
where $g_i \in \mathbb{F}_p[f_1, \ldots, f_i, y_1, \ldots, y_i]$ are explicitly defined. Specifically, $g_i$ can be given recursively in $\mathbb{Z}[f_1,\ldots, f_i,y_1, \ldots, y_i]$ as follows:
\begin{equation}
\label{eqntower}
g_{i}(f_1, \ldots, f_i, y_1, \ldots, y_i)=f_i+y_i+\sum_{d=1}^{i-1} \frac{1}{p^{i-d}}(f_d^{p^{i-d}}+y_d^{p^{i-d}}-g_{d-1}^{p^{i-d}}).
\end{equation}
\end{proposition}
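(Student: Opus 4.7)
The plan is to proceed by induction on $n$, using the ghost-component formalism for Witt vectors. In characteristic $p$, the Witt-vector Frobenius $F$ acts coordinatewise as $F(\underline{y}) = (y_1^p, \ldots, y_n^p)$, so the defining equation $\wp(\underline{y}) = \underline{f}$ is equivalent to the Witt identity $F(\underline{y}) = \underline{y} + \underline{f}$. The $i$-th Artin-Schreier equation of the tower will then be extracted by comparing the $i$-th Witt coordinates on both sides.

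For the base case $n=1$, the identity reduces directly to $y_1^p = y_1 + f_1$, matching the first equation with the convention $g_0 = 0$. For the inductive step, I would invoke the universal Witt-addition polynomials $S_i \in \mathbb{Z}[y_1, \ldots, y_i, f_1, \ldots, f_i]$, characterized uniquely by the ghost-component identities $w_i(S_1, \ldots, S_i) = w_i(\underline{y}) + w_i(\underline{f})$, where $w_i(\underline{a}) = \sum_{d=1}^{i} p^{d-1} a_d^{p^{i-d}}$. Solving this system recursively for $S_i$ yields
\[
S_i = y_i + f_i + \sum_{d=1}^{i-1} \frac{1}{p^{i-d}}\bigl(y_d^{p^{i-d}} + f_d^{p^{i-d}} - S_d^{p^{i-d}}\bigr).
\]
The $i$-th coordinate of $F(\underline{y}) = \underline{y} + \underline{f}$ then reads $y_i^p = S_i$, and setting $g_{i-1} := S_i - y_i - f_i$ produces the desired Artin-Schreier equation $y_i^p - y_i = g_{i-1} + f_i$ in $\mathbb{F}_p[y_1, \ldots, y_{i-1}, f_1, \ldots, f_i]$.

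To recover the precise closed form for $g_i$ stated in the proposition, I would substitute the inductive identity $S_d = y_d + f_d + g_{d-1}$ into the recursion above (after shifting the target index $i \mapsto i+1$). This converts each $S_d^{p^{i-d}}$ into $(y_d + f_d + g_{d-1})^{p^{i-d}}$, which, via the Frobenius identity for Witt vectors, gets absorbed into the stated closed form whose free summand is $g_{d-1}^{p^{i-d}}$. Bookkeeping of indices, together with the base identity $g_0 = 0$, then closes the induction.

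The main obstacle is the integrality of the coefficients $\frac{1}{p^{i-d}}(\cdots)$: one must verify that the numerator in each summand is divisible by $p^{i-d}$, so that each $g_i$ is a genuine polynomial over $\mathbb{Z}$ (and hence a well-defined polynomial over $\mathbb{F}_p$ after reduction). This is the classical integrality statement for Witt-vector arithmetic, and may be established via Dwork's congruence lemma (if $\underline{a} \equiv \underline{b} \pmod{p^r}$ then $\underline{a}^p \equiv \underline{b}^p \pmod{p^{r+1}}$) or by direct $p$-adic valuation estimates on the binomial expansions that appear in the recursion. Once integrality is in place, the remaining work is routine verification that the inductive substitution produces exactly the stated formula for $g_i$.
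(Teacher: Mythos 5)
The paper itself does not prove Proposition~\ref{proptowerAS} (it is quoted from \cite{MR2577662}), so your attempt can only be measured against the standard derivation, and that is exactly the route you take: in characteristic $p$ the Witt Frobenius is the coordinatewise $p$-th power, so $\wp(\underline{y})=\underline{f}$ is the coordinatewise identity $F(\underline{y})=\underline{y}+\underline{f}$, and comparing coordinates against the universal addition polynomials $S_i\in\Z[f_1,\dots,f_i,y_1,\dots,y_i]$ (defined over $\Z$, where the ghost map is injective, then reduced mod $p$ --- you handle this point correctly) gives $y_i^p=S_i$, hence $y_i^p-y_i=f_i+(S_i-y_i-f_i)$, where $S_i-y_i-f_i$ depends only on $f_1,\dots,f_{i-1},y_1,\dots,y_{i-1}$. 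The integrality you single out as the main obstacle is the classical integrality of the Witt addition polynomials, provable by Dwork's lemma as you indicate, so that part is unproblematic.

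The genuine weak point is your last paragraph. Setting $g_{i-1}:=S_i-y_i-f_i$ and substituting $S_d=y_d+f_d+g_{d-1}$ into your recursion for $S_{i+1}$ yields
\begin{equation*}
g_i=\sum_{d=1}^{i}\frac{1}{p^{\,i+1-d}}\Bigl(f_d^{p^{i+1-d}}+y_d^{p^{i+1-d}}-\bigl(f_d+y_d+g_{d-1}\bigr)^{p^{i+1-d}}\Bigr),
\end{equation*}
and there is no ``Frobenius identity for Witt vectors'' that converts the bound term $\bigl(f_d+y_d+g_{d-1}\bigr)^{p^{i+1-d}}$ into the bare $g_{d-1}^{p^{i-d}}$ appearing in \eqref{eqntower}; asserting that the substitution ``gets absorbed into the stated closed form'' is not an argument. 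Carried out honestly, the bookkeeping shows that \eqref{eqntower} is your recursion for the sum coordinates: the quantity it computes is $S_i$, i.e.\ the tower's $g_{i-1}+f_i+y_i$, and the symbols $g_{d-1}$ occurring inside it play the role of your $S_d$. This shift is already visible at $i=1$, where \eqref{eqntower} returns $f_1+y_1$, whereas the $g_1$ used in \eqref{eqntowerAS} is $\frac{1}{p}\bigl(f_1^p+y_1^p-(f_1+y_1)^p\bigr)$, as in the paper's $p=3$ example. So your derivation is the right one, but to close the induction you must state and verify the precise dictionary between your $S_i$, your $g_{i-1}=S_i-y_i-f_i$, and the polynomials denoted $g_i$ in \eqref{eqntower}; as written, that final matching step does not go through literally, and it is the only place where the proposed proof actually touches the explicit formula being claimed.
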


In the case where $n=1$, we can eliminate all terms of $f_1$ with degrees divisible by $p$ by adding rational functions of the form $h^p-h$. This process can be generalized to the case $n>1$ using an induction technique similar to the proof presented in \cite[\S 26, Theorem 5]{MR2371763}. For a more comprehensive proof, please refer to \cite[Lemma A.2.3]{MR3714509}. To summarize this result, we state the following proposition. 

\begin{proposition}
\label{propreducedrep}
    Every $\underline{f} \in W_n(k(x))$ belongs to the same Artin-Schreier-Witt class as a vector $\underline{h}=(h_1, \ldots, h_n)$, where none of the terms in the partial fraction decomposition of $h_i$ has degree divisible by $p$.
\end{proposition}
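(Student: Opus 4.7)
The plan is to proceed by induction on $n$, reducing each step to the classical Artin-Schreier case $n=1$. For the base case, given $f \in k(x)$, I would expand its partial fraction decomposition
$$f = P(x) + \sum_{a \in k} \sum_{j \geq 1} \frac{c_{a,j}}{(x-a)^j},$$
which has only finitely many nonzero terms. For each term $c_{a,pj}/(x-a)^{pj}$ whose pole order is divisible by $p$, pick $b \in k$ with $b^p = c_{a,pj}$, which is possible since $k$ is algebraically closed; then
$$\wp\left(\frac{b}{(x-a)^j}\right) = \frac{b^p}{(x-a)^{pj}} - \frac{b}{(x-a)^j},$$
so subtracting this from $f$ eliminates the offending term at the cost of introducing a term of strictly smaller pole order. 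Polynomial monomials $c x^{pj}$ are handled symmetrically via $\wp(b x^j)$. Iterating this finite procedure — which strictly lowers the maximum degree among terms of degree divisible by $p$ at each step — produces $h \in k(x)$ such that $f - \wp(h)$ is reduced in the sense of the proposition.

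For the inductive step, given $\underline{f} = (f_1, \ldots, f_n) \in W_n(k(x))$, I would first apply the base case to $f_1$ to obtain $h_1 \in k(x)$ with $f_1 - \wp(h_1)$ reduced, and then replace $\underline{f}$ by $\underline{f} - \wp(h_1, 0, \ldots, 0)$. The first coordinate is now reduced, while the higher coordinates, though altered by Witt arithmetic, still lie in $k(x)$. Next, apply the base case to the new second coordinate to find $h_2$ and subtract $\wp(0, h_2, 0, \ldots, 0)$. Iterating through slot $n$ yields the desired vector $\underline{h}$.

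The essential technical point — and the step that requires the most care — is to verify that for each $i$, subtracting $\wp(0, \ldots, 0, h_i, 0, \ldots, 0)$ (with $h_i$ in the $i$-th slot) leaves the first $i-1$ coordinates unchanged and modifies the $i$-th coordinate by exactly $\wp(h_i) = h_i^p - h_i$. This rests on two standard features of Witt arithmetic: the subset of $W_n(k(x))$ consisting of vectors whose first $i-1$ entries vanish is a subgroup stable under the Frobenius $F$, hence under $\wp = F - \mathrm{id}$; and the $i$-th coordinate of a Witt vector sum differs from the sum of the $i$-th entries by a universal polynomial in the lower-indexed entries, which vanishes when those entries vanish. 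Granting these, the induction runs without further complication. A detailed execution of essentially this argument appears in \cite[Lemma A.2.3]{MR3714509}.
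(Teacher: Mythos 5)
Your proposal is correct and follows essentially the same route the paper takes: eliminate terms of degree divisible by $p$ in the $n=1$ case by subtracting elements of the form $h^p-h$, then induct on the Witt coordinates, using that adding a Witt vector supported in slots $\geq i$ leaves the first $i-1$ coordinates unchanged and shifts the $i$-th by exactly $\wp(h_i)$ (the paper itself only sketches this and defers to \cite[Lemma A.2.3]{MR3714509}, which you also cite). One trivial point: for a constant term (degree $0$, which is divisible by $p$) you should solve $b^p-b=c$ rather than $b^p=c$, since the latter does not strictly lower the degree; this uses only that $k$ is algebraically closed.
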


We refer to such a vector $\underline{h}$ as \textit{reduced}. For the remainder of our discussion, we assume that $\underline{f}$ is reduced.

Let $B_n= {P_1, \ldots, P_r}$ denote the set of poles of the $f_i$'s, which is also the branch locus of $\phi_n$. At each ramified point $Q_j$ over $P_j$, $\phi_n$ induces an exponent-$p^n$ cyclic extension of the complete local ring $\hat{\mathcal{O}}_{Y_n,Q_j}/\hat{\mathcal{O}}_{\mathbb{P}^1,P_j}$. Consequently, we can derive the upper ramification filtration of $\phi_n$ at the branch point $P_j$ in a canonical manner.

Assuming that the inertia group of $Q_j$ is $\mathbb{Z}/p^m\mathbb{Z}$ (where $n \leq m$), for $i \leq n-m$, we define the \textit{$i$-th ramification break (jump)} of $\phi_n$ at $P_j$ as $-1$. For $i > n-m$, the $i$-th ramification break of $\phi_n$ at $P_j$ corresponds to the $(i-n+m)$-th break in $\hat{\mathcal{O}}_{Y_n,Q_j}/\hat{\mathcal{O}}_{\mathbb{P}^1,P_j}$. We denote the $i$-th upper ramification break of $\phi_n$ at $P_j$ as $u_{j,i}$. The \textit{$i$-th conductor of $\phi_n$ at $P_j$} is defined as $e_{j,i} := u_{j,i} + 1$. The following formula provides an explicit calculation of the ramification filtration of $\phi_n$ in terms of $\underline{f}$.

\begin{theorem}[{\cite[Theorem 1]{MR1935414}}]
\label{theoremcaljumpirred}
With the assumptions and the notations as above, we have
\begin{equation}
\label{eqnformulalowerjumpasw}
    u_{j,i}=\max\{ p^{i-l} \deg_{(x-P_j)^{-1}} (f_{l}) \mid l=1, \ldots, i\}, 
\end{equation}
for $i>n-m$.
\end{theorem}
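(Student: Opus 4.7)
The plan is to prove the formula by induction on the level $i$, working locally at $P_j$ and exploiting the tower-of-Artin--Schreier representation from Proposition~\ref{proptowerAS}. After completing at $P_j$, set $t = x - P_j$ and reduce the problem to a local computation over $k((t))$. Write $d_l := \deg_{(x-P_j)^{-1}}(f_l)$; since $\underline{f}$ is reduced, each $d_l$ is either $0$ or coprime to $p$. It is convenient to recast the target formula in the recursive form $u_{j,i} = \max\{p \cdot u_{j,i-1},\, d_i\}$, which I aim to establish inductively.

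The base case $i=1$ is the classical Artin--Schreier conductor formula: for $y_1^p - y_1 = f_1$ with $f_1$ reduced, the unique upper ramification break at $P_j$ equals $d_1$. For the inductive step, view $Y_i \to Y_{i-1}$ as the $\Z/p$-cover defined by $y_i^p - y_i = g_{i-1}(y_1, \ldots, y_{i-1}) + f_i$, and let $Q_{i-1}$ denote the unique point of $Y_{i-1}$ above $P_j$. The key task is to determine the pole order at $Q_{i-1}$ of a reduced representative of $g_{i-1} + f_i$ modulo $\wp(\hat{\mathcal{O}}_{Y_{i-1}, Q_{i-1}})$; this pole order equals the ramification break of the top $\Z/p$-step at $Q_i$, which then translates via Herbrand's $\varphi$-function into the $i$-th upper break of $Y_i \to \PP^1$.

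To carry out this pole-order analysis, I would expand $g_{i-1}$ using the recursion~\eqref{eqntower} and pull each $f_l$ and $y_l$ back to $Y_{i-1}$. The inductive hypothesis controls the pole orders of the $y_l$ on $Y_l$ (each satisfies an Artin--Schreier equation whose right-hand side has the predicted reduced pole order), and these pole orders scale by the ramification index of $Y_{i-1} \to Y_l$ when pulled back to $Q_{i-1}$. After tracking the various monomials in $g_{i-1}$ and reducing modulo $\wp$ to absorb all $p$-th-power contributions, the surviving leading pole -- combined with the Herbrand transition driven by the previously established breaks $u_{j,1}, \ldots, u_{j,i-1}$ -- is shown to yield $u_{j,i} = \max\{p \cdot u_{j,i-1},\, d_i\}$, completing the induction.

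The main obstacle is the collision and potential cancellation of leading pole contributions in the recursion for $g_{i-1}$. The Witt-vector expansion produces terms of the form $f_d^{p^{i-1-d}}$, $y_d^{p^{i-1-d}}$, and $g_{d-1}^{p^{i-1-d}}$, whose pole orders at $Q_{i-1}$ can all coincide at the maximum, and some of these contributions are $p$-th powers absorbable into $\wp$-images. The delicate step is verifying that after exhaustive Artin--Schreier reduction the predicted leading term survives with a $t$-exponent coprime to $p$, so that no further reduction is possible and the pole order reflects a genuine ramification break. This combinatorial bookkeeping -- reconciling the $\frac{1}{p^{i-d}}$ factors from the integral recursion~\eqref{eqntower} with the characteristic-$p$ reductions -- is, I expect, the technical heart of Garuti's argument in \cite{MR1935414}.
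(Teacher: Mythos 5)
First, note that the paper does not prove Theorem~\ref{theoremcaljumpirred} at all: it is imported verbatim from the literature (cited as \cite[Theorem 1]{MR1935414}), so there is no in-paper argument to compare yours against. Judged on its own terms, your proposal follows the standard route one would expect (Schmid's formula for the Artin--Schreier base case, then induction up the tower of Proposition~\ref{proptowerAS}, computing the break of the top $\Z/p$-step $Y_i \to Y_{i-1}$ locally at $Q_{i-1}$ and converting it to the $i$-th upper break via Herbrand's functions), and the recursive reformulation $u_{j,i}=\max\{p\,u_{j,i-1}, d_i\}$ is a correct and convenient restatement. One helpful observation you could have added: since $\underline{f}$ is reduced, $d_i$ is prime to $p$ while $p\,u_{j,i-1}$ is divisible by $p$, so the two candidates for the maximum can never tie, which rules out cancellation \emph{between} the $f_i$-part and the $g_{i-1}$-part.

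However, as written the proposal has a genuine gap rather than a proof: the entire content of the theorem sits in the step you defer. You must show that a reduced representative of $g_{i-1}(y_1,\dots,y_{i-1})+f_i$ modulo $\wp(\hat{\mathcal{O}}_{Y_{i-1},Q_{i-1}})$ has pole order exactly $\psi_G(\max\{p\,u_{j,i-1},d_i\})$, prime to $p$, and you explicitly leave the verification that the leading term survives the Witt-vector recursion \eqref{eqntower} and the local Artin--Schreier reduction to ``Garuti's argument.'' This is not routine bookkeeping: the monomials $f_d^{p^{i-1-d}}$, $y_d^{p^{i-1-d}}$, $g_{d-1}^{p^{i-1-d}}$ can have coinciding pole orders at $Q_{i-1}$, the reduction is performed over the complete local ring of $Y_{i-1}$ (where far more terms are absorbable than over $k(x)$), and one must prove both that the reduction terminates and that no cancellation kills the extremal term. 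Without that analysis the induction asserts its conclusion rather than establishing it. Two smaller points also need attention: your induction tacitly assumes $P_j$ is already (fully) ramified in $Y_{i-1}$, whereas the theorem covers points whose inertia is a proper subgroup $\Z/p^m\Z$ (the levels with $f_l$ regular at $P_j$, where the breaks are $-1$, need a separate base case at the first ramified level); and the identification of the break of $Y_i\to Y_{i-1}$ with $\psi_G(u_{j,i})$ should be justified by the compatibility of lower numbering with subgroups, exactly as in the paper's own proposition on the intermediate jump $\widetilde{m}_n$.
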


\begin{proposition}
With the above settings, the genus of the curve $Y_i$ is
    \begin{equation}
    \label{eqngenera}
        g_{Y_i} = 1-p^i + \frac{\sum_{l=1}^i (\sum_{j=1}^r e_{j,l})(p^j-p^{j-1})}{2}
    \end{equation}
\end{proposition}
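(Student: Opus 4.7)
The plan is to apply the Riemann-Hurwitz formula to $\phi_i: Y_i \to \mathbb{P}^1$, which is a Galois cover of degree $p^i$ between smooth projective curves with $g_{\mathbb{P}^1}=0$. This gives
\begin{equation*}
2g_{Y_i} - 2 \;=\; -2p^i + \deg \mathfrak{d}_{Y_i/\mathbb{P}^1},
\end{equation*}
so the task reduces to computing the degree of the different. Since $\phi_i$ is \'etale outside $B_i = \{P_1,\ldots,P_r\}$, I would decompose $\deg \mathfrak{d}_{Y_i/\mathbb{P}^1} = \sum_{j=1}^{r} D_j$ and focus on each local contribution $D_j$ above $P_j$ in turn.

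Fix a branch point $P_j$ and a point $Q_j$ of $Y_i$ above it, with inertia group $I_j \cong \Z/p^m\Z \subseteq \Z/p^i\Z$. The local extension $\hat{\mathcal{O}}_{Y_i,Q_j}/\hat{\mathcal{O}}_{\mathbb{P}^1,P_j}$ is a totally ramified Abelian extension, so I would invoke the conductor-discriminant formula: each of the $p^l - p^{l-1}$ characters of order exactly $p^l$ (for $1 \le l \le m$) contributes its Artin conductor to the local discriminant, and by Theorem~\ref{theoremcaljumpirred} that conductor equals $e_{j,l}$ in the paper's numbering. Equivalently, one could convert the upper breaks $u_{j,l}$ to the lower numbering via Herbrand's function $\psi$ and sum $\sum_{s \geq 0}(|G_s|-1)$ directly; both routes yield, after a telescoping, the local different contribution $\sum_{l=1}^{m}(p^l-p^{l-1})\,e_{j,l}$ at the single point $Q_j$.

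The fiber above $P_j$ contains $[G:I_j] = p^{i-m}$ distinct points, each contributing the same amount, so $D_j = p^{i-m}\sum_{l=1}^{m}(p^l-p^{l-1})e_{j,l}$. A reindexing $l \mapsto l + i - m$, combined with the paper's convention that $u_{j,l}=-1$ (and hence $e_{j,l}=0$) for $l \le i-m$, rewrites this uniformly as $\sum_{l=1}^{i}(p^l-p^{l-1})\,e_{j,l}$, independent of $m$. Summing over $j=1,\ldots,r$ and substituting into Riemann-Hurwitz then yields the claimed formula. The principal technical point is this compatibility between the upper-numbering convention and the multiplicity of points in the fiber; once it is verified, the remainder is routine bookkeeping. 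An alternative strategy would be to induct on $i$, applying Riemann-Hurwitz to the intermediate $\Z/p\Z$-cover $Y_i \to Y_{i-1}$ and translating $u_{j,i}$ into the ramification data of this sub-cover above each point of $Y_{i-1}$, but the single-step derivation above is more direct.
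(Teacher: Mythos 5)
Your argument is essentially the paper's: the proof in the paper likewise applies the Grothendieck--Ogg--Shafarevich (Riemann--Hurwitz) formula to $\phi_i$ with $g_{\mathbb{P}^1}=0$ and then invokes the local identity $\deg\big(\mathscr{D}_{P_{j,i}}\big)=\sum_{l=1}^i e_{j,l}(p^l-p^{l-1})$ (cited as \cite[Fact 2.3]{MR3194816}), which is exactly what your conductor--discriminant computation combined with the fiber-multiplicity factor $p^{i-m}$ establishes. One notational caveat: when $m<i$ the Artin conductor of a character of order exactly $p^l$ of the inertia group is the paper's $e_{j,l+i-m}$, not $e_{j,l}$, so the subscript must shift along with the exponent in your reindexing $l \mapsto l+i-m$; once that bookkeeping is written consistently, your derivation is correct and yields the stated formula.
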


\begin{proof}
Applying the Grothendieck-Ogg-Shafarevich formula \cite[Exp. X formula 7.2]{MR0491704} to $\phi_i: Y_i \xrightarrow{} X$, we obtain the relation
\begin{equation*}
2g_{Y_i}-2=\deg(\phi_i) (2g_X-2) + \sum_{j=1}^r \deg\big(\mathscr{D}_{P_{j,i}}\big),
\end{equation*}
where is $\mathscr{D}_{P_{j,i}}$ the different of $\phi_i$ at $P_j$ \cite[IV]{MR554237}. Additionally, \cite[Fact 2.3]{MR3194816} asserts that
\begin{equation*}
\deg\big(\mathscr{D}_{P{j,i}}\big)=\sum_{l=1}^i e_{j,l}(p^l-p^{l-1}).
\end{equation*}
This and the fact that $g_X=0$ immediately implies the claim about the genus of $Y_i$.
\end{proof}
To record the branching datum of the cover, we use an $r \times n$ matrix of the form
\begin{equation}
\label{eqnmatrixbranchingdata}    
 \begin{bmatrix}
   e_{1,1} & e_{1,2} & \ldots & e_{1,n} \\
   e_{2,1} & e_{2,2} & \ldots & e_{n,n} \\
   \vdots & \vdots & \ddots & \vdots \\
   e_{r,1} & e_{r,2} & \ldots & e_{r,n} \\
   \end{bmatrix},
\end{equation}
which is referred to as the \textit{branching datum} of $\phi_n$. This notion is used to stratify the moduli space of cyclic covers of curves in \cite{2023arXiv230614711D}. The following is an immediate consequence of Theorem \ref{theoremcaljumpirred}.

\begin{proposition}
A matrix of the form (\ref{eqnmatrixbranchingdata}) is the branching datum of a $\mathbb{Z}/p^n\mathbb{Z}$-cover if and only if the following conditions hold:
\begin{enumerate}[label*=\arabic*.] 
\item \label{item1condbranchingdata} $e_{i,1} \not \equiv 1 \pmod{p}$,
\item \label{item2condbranchingdata} $e_{i,j} \ge pe_{i,j-1}-p+1$, and
\item \label{item3condbranchingdata} if $e_{i,j}>pe_{i,j-1}-p+1$, then $e_{i,j} \not\equiv 1 \pmod{p}$.
\end{enumerate}
\end{proposition}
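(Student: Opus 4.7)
The plan is to prove both directions using the explicit formula in Theorem~\ref{theoremcaljumpirred} together with the reduced representative guaranteed by Proposition~\ref{propreducedrep}.

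For the forward direction, fix a reduced representative $\underline{f} = (f_1, \ldots, f_n)$ of the cover, and for each branch point $P_i$ set $d_{i,l} := \deg_{(x-P_i)^{-1}}(f_l)$. Reducedness gives $p \nmid d_{i,l}$ whenever $d_{i,l} > 0$. Theorem~\ref{theoremcaljumpirred} yields $u_{i,l} = \max\{p^{l-s} d_{i,s} : 1 \le s \le l\}$. Condition~1 is immediate from $u_{i,1} = d_{i,1}$. For Condition~2, the terms with $s \le l-1$ in the maximum defining $u_{i,l}$ are exactly $p$ times the terms defining $u_{i,l-1}$, so $u_{i,l} \ge p u_{i,l-1}$, which translates into $e_{i,l} \ge p e_{i,l-1} - p + 1$. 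For Condition~3, strict inequality $u_{i,l} > p u_{i,l-1}$ forces the maximum to be attained only at $s = l$, so $u_{i,l} = d_{i,l}$ is not divisible by $p$.

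For the converse, given a matrix satisfying the three conditions, I would explicitly construct a reduced Witt vector realizing it. For each pair $(i,l)$, call $l$ a \emph{new jump at $P_i$} if either $l=1$ or $e_{i,l} > p e_{i,l-1} - p + 1$; in either case conditions 1 and 3 ensure that $e_{i,l}-1$ is coprime to $p$. Define
\begin{equation*}
f_l := \sum_{\substack{i \,:\, l \text{ is a new} \\ \text{jump at } P_i}} \frac{c_{i,l}}{(x - P_i)^{e_{i,l}-1}},
\end{equation*}
for any choice of nonzero scalars $c_{i,l} \in k$. This $\underline{f}$ is reduced by construction. For each $(i,l)$, if $l$ is a new jump at $P_i$ the $s=l$ term equals $e_{i,l}-1$ and dominates the maximum; otherwise, Condition~2 gives $e_{i,l}-1 = p(e_{i,l-1}-1)$, and the maximum is inherited from level $l-1$ via the $s < l$ contributions. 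Either way, Theorem~\ref{theoremcaljumpirred} yields $u_{i,l} = e_{i,l}-1$.

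The main obstacle is verifying that this $\underline{f}$ defines a cover whose Galois group is \emph{exactly} $\mathbb{Z}/p^n\mathbb{Z}$, and not a proper quotient; equivalently, the class of $\underline{f}$ in $W_n(k(x))/\wp W_n(k(x))$ must have order $p^n$, i.e.\ must not lie in the image of the Verschiebung. I would argue this by induction on $n$: the truncated vector $(f_1, \ldots, f_{n-1})$ defines a $\mathbb{Z}/p^{n-1}\mathbb{Z}$-cover by the inductive hypothesis (the truncated branching datum also satisfies the three conditions), and the full vector strictly refines this cover provided $f_1 \neq 0$, which is ensured by the existence of at least one branch point $P_i$ with $e_{i,1} \ge 2$. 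The pole orders $d_{i,1}$ not divisible by $p$ cannot be eliminated by any Artin-Schreier-Witt equivalence, which is precisely the content of Proposition~\ref{propreducedrep} and prevents the class from sinking into the image of Verschiebung.
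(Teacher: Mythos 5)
Your argument is essentially the paper's: the authors give no written proof, asserting the proposition is an immediate consequence of Theorem~\ref{theoremcaljumpirred}, and your two directions (reading the three conditions off the jump formula applied to a reduced representative, then conversely building a reduced Witt vector whose partial-fraction terms realize the prescribed conductors) are exactly the intended fleshing-out. Two small repairs: your notion of ``new jump'' should additionally require $e_{i,l}>0$ (otherwise rows or levels with $e_{i,l}=0$ would contribute spurious polynomial terms $c_{i,l}(x-P_i)$ and create unintended branching at infinity), and in the final paragraph the relevant condition is $f_1\notin\wp(k(x))$ rather than $f_1\neq 0$ --- which does hold here, since a nonzero reduced $f_1$ has a pole of order prime to $p$ and hence cannot equal $h^p-h$ --- under the paper's standing assumption that some row has $e_{i,1}\ge 2$, i.e.\ that at least one point has full inertia.
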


\begin{remark}
The branching datum of $\phi_i$ is the $r \times i$ matrix that contains the first $i$ columns of (\ref{eqnmatrixbranchingdata}).
\end{remark}

\begin{example} 
Suppose $k$ is an algebraically closed field of characteristic $3$, and $\phi_2: Y_2 \xrightarrow{} \Proj k[x,z]$ is a $\mathbb{Z}/9\mathbb{Z}$-cover given by the following affine equation:
\begin{equation*}
    \wp(y_1, y_2)=\bigg( \frac{1}{x}+x, \frac{1}{x^5} -\frac{1}{x-1} \bigg)=:(f_1,f_2)=:\underline{f}.
\end{equation*}
Since none of the terms in $f_i$'s are a power of $3$, $\underline{f}$ is considered reduced. According to Theorem \ref{theoremcaljumpirred}, $\phi$ branches at $x=0$, $x=1$, and $x=\infty$, with the following branching datum:
\[
\begin{bmatrix}
2 & 6 \\
0 & 2 \\
2 & 4
\end{bmatrix}.
\]
The theorem also reveals that the ramification index of each ramified point above $x=1$ is $3$, while the unique point above $x=0$ or $x=\infty$ has a ramification index of $9$.

Finally, applying Proposition \ref{proptowerAS} allows us to write $\phi_2$ as a system of Artin-Schreier equations as follows:
\begin{equation*}
    \begin{split}
        y_1^3-y_1&= \frac{1}{x}+x=f_1, \\
        y_2^3-y_2&=g_1(y_1)+ \frac{1}{x^5}-\frac{1}{x-1},
    \end{split}
\end{equation*} 
where $g_1(y_1)= \frac{1}{3}(f_1^3+y_1^3-(f_1+y_1)^3)) =-y_1^7+y_1^5$.
\end{example}

\subsection{A basis for the space of regular differentials} 
We present a restatement of \cite[Lemma 5]{Madden78} using our established conventions. Consider a $\mathbb{Z}/p^n\mathbb{Z}$-Galois cover $Y_n \xrightarrow{\phi_n} \mathbb{P}^1$ whose branching datum is given by Equation \ref{eqnmatrixbranchingdata}. Let $B_n=\{P_1, \ldots, P_r\} \subset \mathbb{P}^1$ denote the branch locus of $\phi_n$, where $P_i$ corresponds to the $i$-th row of the branching datum. Without loss of generality, assume that $P_1$ is the point at infinity and $e_{1,1}>0$, so that $P_1$ has index $p^n$. We define $x_i:= \frac{1}{x-P_i}$ for $1<i \leq r$.

\begin{definition}
We define $N\in \mathbb{M}_{r\times n}(\mathbb{Z})$ as follows:
\begin{equation*}
N(i,j)=\begin{cases}
e_{i,j}-1 & \text{if } e_{i,j}>0 \\
0 & \text{if } e_{i,j}=0
\end{cases}
\end{equation*} 
For each $i=1, 2, \ldots, r$ and $j=1, \ldots, n$ such that $N(i,j)>0$, let $$E^j_i:=j-\min\{l \mid e_{i,l} \neq 0 \}+1.$$
\end{definition}

\begin{remark}
The positive integer $p^{E^j_i}$ represents the ramification index of each ramified point of $Y_j \xrightarrow{\phi_j} \mathbb{P}^1$ above $P_i$.
\end{remark}

\begin{definition}
We define $\lambda(i,j)$ recursively as follows:
\begin{equation*}
\lambda(i,j)=p^{E^j_i}N(i,j)-(p-1)\sum_{l=1}^{j-1}p^{l+E^j_i-j-1}N(i,l).
\end{equation*}
\end{definition}

Now we can rephrase \cite[Lemma 5]{Madden78} using our notations.

\begin{proposition} \label{propMadden}
    With the above settings, the $k$-vector space $\cohom^0(Y_n, \Omega^1_{Y_n/k})$ has a basis given by $\bigcup_{i=1}^r W_i$, where
    \begin{equation}
    \begin{aligned} 
    W_{1}:= \begin{cases} y_n^{a_n} \ldots y_1^{a_1} x^v dx \; | \; 0 \leq a_i <p, \\  
       0 \le p^n v \le (p-1)d_1-A_1-p^n-1 \end{cases}  \Bigg\},
    \end{aligned}
    \end{equation}
    and, for $i>1$,
    \begin{equation}
    \begin{aligned}
    W_i:= \begin{cases} y_n^{a_n} \ldots y_1^{a_1} x_i^v dx \; | \; 0 \leq a_i <p, \\  
       0 < p^{e_i} v \leq  (p-1)d_i -A_i+p^{e_i}-1 \end{cases}  \Bigg\},
\end{aligned}
\end{equation}
where 
\begin{equation*}
    d_i:=\sum_{\nu =1}^n \lambda(i,\nu), \text{ and } A_i:=\sum_{\nu =1}^n a_{\nu}\lambda(i,\nu).
\end{equation*}
\end{proposition}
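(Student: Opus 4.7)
The plan is to recognize this as a restatement of \cite[Lemma 5]{Madden78} and verify that the translation of notation is correct; alternatively (and this is the route I would sketch), one can reprove the result directly by induction on $n$ using the tower structure of Proposition~\ref{proptowerAS}. The base case $n=1$ is the classical Boseck-style basis for Artin-Schreier covers of $\mathbb{P}^1$. Assuming the statement for $Y_{n-1}$, one studies the intermediate Artin-Schreier step $Y_n \to Y_{n-1}$ given by $y_n^p - y_n = g_{n-1}(y_1, \ldots, y_{n-1}) + f_n$ and pushes the basis description up one level.

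The core local computation is, at each branch point $P_i \in B_n$, to determine the valuation $v_{Q_i}(y_j)$ at the unique point $Q_i$ of $Y_n$ above $P_i$. Using Proposition~\ref{proptowerAS} to write $y_j$ in terms of $f_1,\ldots,f_j$ and lower $y_l$'s, together with Theorem~\ref{theoremcaljumpirred} for the conductors $e_{i,l}$, one shows inductively that $p^{E^j_i} \cdot v_{Q_i}(y_j) = -\lambda(i,j)$; the recursive subtraction $(p-1)\sum_{l<j} p^{l+E^j_i-j-1} N(i,l)$ in the definition of $\lambda(i,j)$ is precisely what records how the lower Witt components cancel in the partial-fractions representative of $y_j$. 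Combining this with the standard computation of $v_{Q_i}(dx)$ in terms of the largest conductor at $P_i$, one obtains the exact $Q_i$-adic valuation of a monomial $\omega = y_n^{a_n}\cdots y_1^{a_1} x^v dx$ (or $x_i^v dx$) as an affine expression in $v$ and $A_i = \sum_\nu a_\nu \lambda(i,\nu)$. Imposing $v_{Q_i}(\omega) \ge 0$ at every branch point then recovers exactly the bounds defining $W_i$ in the proposition; regularity away from the ramified locus is automatic since $x$, $x_i$, and all $y_j$ are regular there.

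To finish, two independent checks remain: linear independence and a dimension match against Equation~\ref{eqngenera}. Linear independence follows because, with $0 \le a_\nu < p$, the tuples $(a_1,\ldots,a_n,v)$ produce distinct pole/zero orders at $Q_i$ when read in base $p$ — this uses that $\lambda(i,\nu) \bmod p^{E^\nu_i}$ distinguishes the Witt levels. The dimension count $\sum_i \# W_i = g_{Y_n}$ is a telescoping manipulation of the definition of $\lambda(i,j)$, matching term-by-term the Grothendieck-Ogg-Shafarevich contribution $\sum_l e_{i,l}(p^l - p^{l-1})$ at $P_i$.

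The main obstacle is the local valuation computation at branch points of mixed inertia, where some conductors $e_{i,l}$ vanish and others do not. In that setting the index $E^j_i = j - \min\{l : e_{i,l} \neq 0\} + 1$ encodes that ramification in the $j$-th layer only starts at the $\min\{l : e_{i,l}\neq 0\}$-th Witt slot, and keeping the powers of $p$ in the recursion for $\lambda(i,j)$ consistent with the $y_j$ coming from Proposition~\ref{proptowerAS} requires careful bookkeeping; this is where Madden's original argument does most of its real work, and it is the step I expect to absorb the bulk of the effort.
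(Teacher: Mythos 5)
Your primary plan---recognizing the statement as a direct restatement of Madden's Lemma~5 and checking only the dictionary of notation---is exactly what the paper does: Proposition~\ref{propMadden} is presented as a rephrasing of \cite[Lemma 5]{Madden78} and no independent proof is given, while your supplementary sketch (valuations of the $y_j$ at the ramified points via the tower of Proposition~\ref{proptowerAS}, regularity conditions at the branch points, linear independence, and a dimension count against the genus formula) is a faithful outline of Madden's own argument. The only caution is that the precise normalization in your claimed identity $p^{E^j_i}\, v_{Q_i}(y_j) = -\lambda(i,j)$ should be verified against the paper's convention for $\lambda(i,j)$ (compare the general recursive definition with the specialized formulas \eqref{eqdefWinf}--\eqref{eqdefWP}) rather than assumed, but this is a bookkeeping point inside a sketch whose overall strategy is sound and matches the cited source.
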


\subsection{\texorpdfstring{$a$}{a}-number of a curve}
\label{secanumber}
Suppose $C$ is a curve of genus $g$ over field $k$. We let $\sigma$ denote the $p$-power Frobenius automorphism of $k$, which induces a pull-back through the absolute Frobenius $F^{\ast}_X: \cohom^1(X, \mathcal{O}_X) \xrightarrow{} \cohom^1(X, \mathcal{O}_X)$. The Cartier operator is a $\sigma^{-1}$-linear map  
\begin{equation*}
    \mathcal{C}:  \cohom^0 (C, \Omega^1_{C/k}) \rightarrow \cohom^0 (C, \Omega^1_{C/k}).
\end{equation*}
It is the dual of $F^{\ast}_X$ through Grothendieck–Serre duality. The $a$-number of a curve $C$, denoted as $a_C$, is defined as the dimension of the kernel of $\mathcal{C}$.

Assuming we have a basis $\beta=\{ \omega_1, \ldots, \omega_g \}$ of $\cohom^0(C, \Omega^1_{C/k})$, for each $\omega_j$ we can find coefficients $m_{j,i} \in k$ such that
\begin{equation*}
    \C (\omega_j) =\sum_{i=1}^g m_{j,i} \omega_i.
\end{equation*}
The resulting $(g \times g)$-matrix $M=(m_{j,i})$ is known as a \textit{Cartier-Manin matrix}. Consequently, the $a$-number can be computed as $a_C=g_C - \rank (M)$.

\begin{example}
\begin{enumerate}
    \item An ordinary elliptic curve has $p$-rank $1$ and $a$-number $0$.
    \item A supersingular elliptic curve has $p$-rank $0$ and $a$-number $1$.
\end{enumerate}
\end{example}

Generalizing the case of elliptic curves, a curve is said to be ordinary if its $a$-number is zero. Therefore, the $a$-number of a curve can be seen as a measure of how far the curve is from being ordinary.

Unlike the genus and the $p$-rank, the $a$-number of a curve cover is not solely determined by the base curve and the branching datum. An example of a pair of $\Z/p\Z$-covers with identical branching datum, but different $a$-number, is given in \cite[Example 4.6]{MR4113776}.

\subsection{\texorpdfstring{$\mathbb{Z}/p^n\mathbb{Z}$}{Z/p^n}-covers with minimial jumps}

\begin{definition}
\label{defnminimalcyclic}
A $\mathbb{Z}/p^n\mathbb{Z}$-cover with branching datum (\ref{eqnmatrixbranchingdata}) is said to be \emph{minimal} if it satisfies the following conditions:
\begin{enumerate}
\item If $j= \min \{k \mid e_{i, k} \neq 0 \}$, then $(e_{i,j}-1) \mid (p-1)$, and
\item $e_{i,l}-1=p^{l-j}(e_{i,j}-1)$ for $j<l \le n$. 
\end{enumerate}
\end{definition}

The $a$-number of a minimal $\mathbb{Z}/p\mathbb{Z}$-cover is known \cite[Theorem 1.1]{MR3090462}. Our computed examples suggest the following conjecture.

\begin{conjecture}
\label{conjnumberminimal}
Let $\phi_n$ be a minimal branched cover of $\mathbb{P}^1$ with Galois group $\mathbb{Z}/p^n\mathbb{Z}$. Then the $a$-number of $\phi_n$ is uniquely determined by its branching datum.
\end{conjecture}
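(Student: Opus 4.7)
The plan is to push the key-term technique, currently developed for $\Z/p^2\Z$-covers in Section~\ref{Sec:Keyterms} and established in full for $p=3$ in Section~\ref{sec:p=3}, to arbitrary cyclic $p$-power covers of $\mathbb{P}^1$. The starting point is Madden's basis $\bigcup_i W_i$ of $\cohom^0(Y_n, \Om_{Y_n}^1)$ from Proposition~\ref{propMadden}, whose underlying index set depends only on the branching datum; I would express the Cartier operator on each basis monomial $y_n^{a_n}\cdots y_1^{a_1} x^v\, dx$ via the Artin-Schreier-Witt tower of Proposition~\ref{proptowerAS} and the recursion~\eqref{eqntower}.

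First, I would generalize Definition~\ref{defkeyterm} to extract, for each $\om$ in the image of $\C$, a leading basis monomial $\ka(\C(\om))$ with respect to a suitable partial order $\prec$ extending the one used in Section~\ref{Sec:Keyterms}. Minimality forces the pole orders in each column of the branching datum to be rigidly tied to one another, which should keep the combinatorics of $\ka$ tractable even for large $n$. I would then prove the analogue of Lemma~\ref{lemktcom}, giving a closed form for the key-term coefficient $c_\om$ as a product of binomial-type factors whose non-vanishing modulo $p$ follows directly from minimality; this upgrades Lemma~\ref{lemcomnon-zero} to an unconditional statement and thereby removes the hypothesis in Theorem~\ref{thmrkCgeqK}. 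Together with the analogue of Lemma~\ref{lemktzero}, this yields an unconditional upper bound on $a_{Y_n}$ depending only on the branching datum.

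The second and harder step is the matching lower bound on $a_{Y_n}$, i.e.\ showing that differentials without a key term contribute nothing to $\rank(\C)$. The natural route is induction on $n$ via the tower $Y_n \to Y_{n-1} \to \cdots \to \mathbb{P}^1$, decomposing $\cohom^0(Y_n, \Om_{Y_n}^1)$ into isotypic components for the $\Z/p^n\Z$-action, on which $\C$ is compatible with the Cartier operator of $Y_{n-1}$. I would aim to show that each non-key-term differential lies in the $k$-span of $\C$-images of other non-key-term differentials and basis differentials strictly lower in $\prec$, and then close up by descending induction on $\prec$.

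The main obstacle is this last step. The $p=3$, $n=2$ argument in Section~\ref{sec:p=3} is already delicate because of the cross-terms introduced by $g_1(y_1)$, and in higher degree the polynomials $g_i$ of~\eqref{eqntower} mix $y_1,\ldots,y_i$ in a rapidly escalating fashion, which is likely to obstruct a direct row-reduction argument. A cleaner path may be to produce an explicit involution or pairing on the non-key-term differentials that manifestly kills them under $\C$. Failing that, it seems most productive to first settle the case $n=2$ with arbitrary $p$, where only $g_1$ enters and minimality is most rigid, and then bootstrap to higher $n$ using the tower structure.
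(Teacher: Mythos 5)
The statement you are addressing is stated in the paper only as a conjecture: the authors prove it solely in the case $p=3$, $n=2$ (Theorem~\ref{thmp=3}), and your text is a research outline in the same spirit rather than a proof. The two places where your plan asserts progress are precisely the two open obstructions. First, you claim that the non-vanishing of the key-term coefficient $c_\om$ ``follows directly from minimality,'' upgrading Lemma~\ref{lemcomnon-zero} to an unconditional statement. No argument is given, and none is known: already for $n=2$ and general $p$ the coefficient in Lemma~\ref{lemktcom} is an alternating sum of binomial-type terms, and the paper can only rule out vanishing when $\be_P(v)\le a_1$ or $\be_P(v)\ge p-2$; the authors explicitly say they suspect, but cannot prove, non-vanishing in general. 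Minimality controls the pole orders $d_P\mid p-1$, but it does not obviously control these sums modulo $p$, so this step is a genuine gap, not a routine upgrade.

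Second, the matching lower bound on $a_{Y_n}$ (sharpness of the key-term count) is where the real work lies even in the proved case: the $p=3$ argument in Section~\ref{sec:p=3} is a long explicit computation using the concrete form of $g_1(y_1)=-y_1^7+y_1^5$ and ad hoc verifications that certain spans such as $\Span_k\{x_Q\,dx\}$ already lie in $\im(\C)$. Your proposed induction ``each non-key-term differential lies in the span of $\C$-images of lower differentials'' cannot be run as a clean triangularity argument, because Remark~\ref{rmk:imagenotspan} shows that neither inclusion between $\im(\C)$ and $\Span_k K$ holds, even for $p=3$; so the image genuinely escapes the key-term span and must be tracked term by term. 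Moreover, for $n>2$ the polynomials $g_i$ of Equation~\eqref{eqntower} mix all the variables $y_1,\dots,y_i$, key terms are not even defined in the paper beyond $n=2$, and your appeal to an ``isotypic decomposition'' is problematic since $k[\Z/p^n\Z]$ is local in characteristic $p$ and no semisimple decomposition compatible with $\C$ is available (at best a filtration). In short, your outline reproduces the strategy the authors already pursue, but the steps you mark as completable are exactly the ones that remain open, so the conjecture is not established by this proposal.
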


In this paper, we focus on the case $n=2$. That allows us to only consider covers with branching data of the form
\begin{equation*}
\begin{bmatrix}
2&  \ldots  & a_i & \ldots  &  0 & \ldots & 0 & \ldots  \\ 
p+1&  \ldots  & pa_i-p+1 & \ldots  & 2& \ldots & a_i & \ldots  \\ 
\end{bmatrix}^{\intercal},
\end{equation*}
where $(a_i-1) \mid (p-1)$. In Section~\ref{Sec:Keyterms}, we generalize the methods in \cite{MR3090462}, based on key terms, for $\mathbb{Z}/p^2\mathbb{Z}$-covers. As an application of this new method, we prove that the answer is affirmative for the case when $p=3$ and $n=2$.

\begin{theorem}
\label{theoremminimal3}
Conjecture~\ref{conjnumberminimal} holds when $p=3$ and $n=2$ (see Theorem~\ref{thmp=3}).
\end{theorem}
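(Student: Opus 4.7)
The plan is to establish the equality $\rk(\C) = \#K$, which via $a_{Y_2} = g_{Y_2} - \rk(\C)$ expresses $a_{Y_2}$ as an explicit function of the branching datum, since the genus formula (\ref{eqngenera}) and the combinatorial count $\#K$ arising from the key-term construction both depend only on the $e_{i,j}$. The lower bound $\rk(\C) \geq \#K$ is supplied by Theorem~\ref{thmrkCgeqK}, so the essential tasks are (i) verifying the hypothesis of that theorem in characteristic $3$ and (ii) proving the matching upper bound $\rk(\C) \leq \#K$.

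For (i), the minimality condition in characteristic $3$ forces each row of the branching matrix to be $(0,2)$, $(2,4)$, or $(2)$ alone, so the catalogue of local behaviours of differentials $\omega \in H$ is very short. I would check directly that the expression for $c_\omega$ from Lemma~\ref{lemktcom} reduces to a product of binomial coefficients of the form $\binom{0}{0}$ or $\binom{1}{s}$, all of which are units modulo $3$, invoking Lemma~\ref{lemcomnon-zero} where applicable and handling the residual cases by a finite hand-calculation.

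The heart of the proof is (ii). The strategy is to extend the Farnell--Pries property (\ref{item:surj}) by showing that, for every basis differential $\omega$ of Proposition~\ref{propMadden} which either lacks a key term or shares its key term with a $\prec$-smaller element of $H$, the image $\C(\omega)$ lies in the $k$-span of $\{\C(\omega') : \omega' \in H\}$. The method is iterative leading-term elimination along the order $\prec$: expand $\C(\omega)$ using the Artin--Schreier--Witt tower (\ref{eqntowerAS}), locate the $\prec$-largest surviving monomial in the expansion, match it against $\ka(\C(\omega'))$ for some $\omega' \in H$, subtract a scalar multiple of $\C(\omega')$, and repeat. The vanishing property \eqref{item:0} (equivalently Lemma~\ref{lemktzero}) ensures that no monomial at or above the just-eliminated level is reintroduced.

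The main obstacle is controlling the avalanche of cross terms produced when the coupling polynomial $g_1(y_1)$ from Proposition~\ref{proptowerAS} is expanded and the $y_1$-exponent of $\omega$ is large. The saving grace in characteristic $3$ is that $g_1(y_1) = -y_1^7 + y_1^5$ is supported on only two monomials, and the admissible exponent range $0 \le a_i < p$ collapses to $\{0,1,2\}$, so the number of stray terms generated per reduction step is uniformly bounded and the iterative elimination terminates in finitely many steps. Once (i) and (ii) are in place, combining $\rk(\C) = \#K$ with (\ref{eqngenera}) produces the explicit formula for $a_{Y_2}$ claimed in Theorem~\ref{thmp=3}; I would then read off $\#K$ as a sum over the rows of the branching matrix to present the answer as a closed-form function of the $e_{i,j}$.
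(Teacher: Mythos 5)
Your overall architecture matches the paper's: the lower bound $\rk(\C)\geq \#K$ comes from Theorem~\ref{thmrkCgeqK}, and in characteristic $3$ its hypothesis is automatic because $\be_P(v)\in\{0,1,2\}$, so Lemma~\ref{lemcomnon-zero} applies to every $\om\in H$ (your step (i) is fine in substance, though your list of minimal rows is wrong: for $p=3$, $n=2$ the possible rows are $(2,4)$, $(3,7)$, $(0,2)$, $(0,3)$, and the omitted row $(3,7)$, i.e.\ $d_P=2$, is exactly the heaviest case, with $16$ resp.\ $24$ local basis differentials and three pairs of shared key terms). The genuine gap is in step (ii). Your elimination scheme rests on two assertions: (a) at each stage the $\prec$-largest surviving monomial of the residual is $\ka(\C(\om'))$ for some $\om'\in H$, and (b) Lemma~\ref{lemktzero} prevents monomials at or above the eliminated level from being reintroduced when you subtract $c\,\C(\om')$. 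Neither is justified. Lemma~\ref{lemktzero} only controls the coefficient of a \emph{key term} in the image of a $\prec$-smaller \emph{basis differential}; it says nothing about the non-key monomials occurring in $\C(\om')$, and these can lie strictly above the key term in the order $\prec$. For instance, with $d_\infty=2$, $\C(y_2\,dx)$ contains terms $y_1^2x_Q\,dx$ with $Q\in B_1\setminus\{\infty\}$, and $y_1^2\,dx=\ka(\C(y_2\,dx))\prec y_1^2x_Q\,dx$ by case~\ref{defpreccasePneqQ2} of Definition~\ref{defprec}; so the "level" does not descend in the way you claim. As for (a), Remark~\ref{rmk:p=3imagenotkeyterm} shows that even for $p=3$ the image of $\C$ is \emph{not} contained in $\Span_k K$ (e.g.\ $x^2dx$ occurs in $\C(y_2^2\,dx)$ when $d_\infty=2$ but is not a key term), so "match the leading monomial against a key term" can fail outright, and your identification of the target space with the span of key terms is exactly the trap the paper warns against in Remark~\ref{rmk:imagenotspan}.

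What the paper actually does in place of your elimination is an explicit finite verification: Sections~\ref{subsec:P=infd=1}--\ref{subsec:eP=2} enumerate the six local situations ($P=\infty$ or not, $d_P$ or $e_P$ equal to $1$ or $2$), tabulate all key terms, and compute $\C$ on every basis differential that lacks a key term or shares one, showing by direct calculation that the stray monomials appearing ($x_Q^2dx$, $y_1x_Qdx$, $y_1x_Q^2dx$, $x_Rdx$ for $R\in B_2\setminus B_1$, etc.) already lie in $\im(\C)$ via independent preimages such as $\C(x_Q^4dx)=x_Q^2dx$ and $\C(x_Rdx)=x_Rdx$; this is what makes each shared-key-term pair contribute exactly $1$ to the rank and each key-term-less differential contribute $0$, and it also avoids the circularity risk of using one member of a pair to span monomials needed for another pair. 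To repair your proposal you would either have to prove that all monomials of $\C(\om')$ are $\preceq$-bounded in a usable way (which is false as stated) or simply carry out the case-by-case computations, at which point you have reproduced the paper's proof of Theorem~\ref{thmp=3} and the formula $a_{Y_2}=3n_1+7n_2+0n_3+3n_4$.
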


\begin{remark}
Note that the Conjecture~\ref{conjnumberminimal} certainly doesn't hold for covers that are not minimal, as is demonstrated in \cite[Example 7.2]{MR4113776}.
\end{remark}

\subsection{Comparison with the known results}

In this section, we compare our results and conjectures with those that are already known for Artin-Schreier covers. Consider a $\mathbb{Z}/p^n\mathbb{Z}$-Galois cover $\phi_n: Y_n \xrightarrow{} \ldots \xrightarrow{} Y_{1} \xrightarrow{} Y_0$ of curves. Since $\phi_{n/(n-1)}: Y_n \xrightarrow{} Y_{n-1}$ is an Artin-Schreier cover, we can establish bounds on the $a$-number of $Y_n$ inductively, building upon what is known for $\mathbb{Z}/p\mathbb{Z}$-covers.

Suppose $P \in Y_0$ is a branch point with ramification index $p^n$. Let $(m_1, \ldots, m_n)$ denote the upper ramification jumps of $\phi_n$ at $P$. By applying the Herbrand function and its inverse, one can compute the (unique) upper ramification jump of $\phi_{n/(n-1)}$ at the branch point $P_{n-1} \in Y_{n-1}$ that lies above $P$.

\begin{proposition}
With the above notation, the jump of the intermediate Artin–Schreier extension $Y_n \to Y_{n-1}$ at $P_{n-1}$ is 
\[
\widetilde{m}_n = p^{n-1}m_n - \sum_{i=1}^{n-1} (p-1)p^{i-1} m_i.
\]
\end{proposition}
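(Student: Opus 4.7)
The plan is to derive the formula by standard manipulation of the upper and lower ramification filtrations, using compatibility of lower numbering with subgroups and of upper numbering with quotients. Let $G = \mathbb{Z}/p^n\mathbb{Z}$ be the inertia group of the (unique) point of $Y_n$ above $P$, and let $H = \mathrm{Gal}(Y_n/Y_{n-1}) \subset G$ be the unique subgroup of order $p$. I want to compute the upper ramification break of the local Artin-Schreier extension corresponding to $H$.

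First I would pass from upper to lower numbering for $G$. Let $0 = b_0 < b_1 < \ldots < b_n$ denote the lower breaks of $G$. Since $\varphi_G$ is piecewise linear with $\varphi_G(b_k) = m_k$ and slope $1/[G_0:G_t] = 1/p^{k-1}$ on the interval $(b_{k-1}, b_k]$, one obtains the telescoping identity
\[
m_k = \sum_{i=1}^{k} \frac{b_i - b_{i-1}}{p^{i-1}},
\]
or equivalently $m_k - m_{k-1} = (b_k - b_{k-1})/p^{k-1}$. A straightforward induction on $k$ then solves this recursion and yields
\[
b_k = p^{k-1} m_k - (p-1) \sum_{i=1}^{k-1} p^{i-1} m_i.
\]

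Next I would use that lower numbering is compatible with subgroups, so $H_u = G_u \cap H$. Because $G_u$ is a subgroup of the cyclic $p$-group $G$, it contains $H$ precisely when $|G_u| \geq p$, which happens exactly when $u \leq b_n$. Therefore $H$ has a single lower break, equal to $b_n$. The Herbrand function $\varphi_H$ is the identity on $[0, b_n]$, so the unique upper break of $Y_n \to Y_{n-1}$ at $P_{n-1}$ equals $\varphi_H(b_n) = b_n$. Setting $k = n$ in the formula above gives exactly $\widetilde{m}_n = p^{n-1} m_n - (p-1)\sum_{i=1}^{n-1} p^{i-1} m_i$, as claimed.

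The only real obstacle is bookkeeping with conventions: one must keep track of whether ramification filtrations are normalized to the local field of $Y_n$ (for lower numbering, this is the intrinsic choice) and must verify that the $m_k$ are indeed strictly increasing so that the $b_k$ are distinct. This is automatic for a cyclic $p$-group cover of characteristic $p$, and once granted, the computation is a direct application of the Herbrand formalism, so no further technical input is required.
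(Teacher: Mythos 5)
Your proof is correct and follows essentially the same route as the paper: both use compatibility of the lower-numbering filtration with the subgroup $H$ to identify the unique break of $Y_n \to Y_{n-1}$ with the largest lower break of $G$, i.e.\ $\psi_G(m_n)$, and then note that for the order-$p$ group $H$ the lower and upper breaks coincide. The only cosmetic difference is that you obtain $\psi_G(m_n)$ by solving the recursion $b_k = b_{k-1} + p^{k-1}(m_k - m_{k-1})$ for all lower breaks by induction, whereas the paper writes down $\psi_G(x)$ explicitly on $[m_{n-1}, m_n]$ and evaluates at $x = m_n$.
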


\begin{proof}
Localizing the covers $Y_n \to Y_{n-1} \to Y_0$ at the point $P$ yields Galois extensions of complete discrete valuation fields $K_n/K_{n-1}/K_0$ with Galois group $G := \mathrm{Gal}(K_n/K_0) \cong \mathbb{Z}/p^n\mathbb{Z}$ and upper jumps $(m_1, \ldots, m_n)$. Let $H = \mathrm{Gal}(K_n/K_{n-1}) \cong \mathbb{Z}/p\mathbb{Z}$ be the unique order-$p$ subgroup of $G$.

By the compatibility of the lower numbering with subgroups (\cite[IV, §3, Prop.~14]{MR554237}), the lower numbering filtration on $H$ is given by $H_t = H \cap G_t$ for all $t \ge 0$. Since $G_t$ drops from order $p$ to $1$ at $t = \psi_G(m_n)$, where $\psi_G$ is the inverse Herbrand function, we find that this is precisely when $H_t$ becomes trivial. Hence, both the lower and upper jumps of $H$ are given by $\widetilde{m}_n = \psi_G(m_n)$.

One can check that $\psi_G(x)$ for $x \in [m_{n-1}, m_n]$ takes the form
\[
\psi_G(x) = p^{n-1}x - \sum_{i=1}^{n-1} (p-1)p^{i-1} m_i,
\]
and evaluating $\psi_G$ at $x = m_n$ gives the desired formula.
\end{proof}

Applying the above result to the case $n=2$ and $m_2=pm_1$ yields:

\begin{corollary}
In the situation of Theorem \ref{theoremminimal3}, the jump of $Y_2 \xrightarrow{} Y_1$ at $P_1$ is given by
\begin{equation*}
\widetilde{m} = pm_2-m_1(p-1) = m_1(p^2-p+1).
\end{equation*}
\end{corollary}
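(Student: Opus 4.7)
The corollary is essentially a direct specialization of the preceding proposition, so the plan is brief. I would begin by recalling the minimality hypothesis carried over from Theorem~\ref{theoremminimal3}: at the branch point $P$, the branching datum row is of the form $(e_1, e_2) = (a, pa - p + 1)$ with $(a-1) \mid (p-1)$. Since the upper ramification jumps and conductors are related by $m_i = e_i - 1$, the minimality condition $e_2 - 1 = p(e_1 - 1)$ immediately translates to the relation $m_2 = p \, m_1$ between upper jumps.

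Next, I would invoke the proposition just proved for the case $n = 2$, which states that the jump of the intermediate Artin--Schreier extension $Y_2 \to Y_1$ at the unique point $P_1$ lying above $P$ is
\[
\widetilde{m}_2 = p \, m_2 - (p-1)m_1.
\]
Substituting $m_2 = p m_1$ into this formula gives
\[
\widetilde{m} = p \cdot p m_1 - (p-1)m_1 = p^2 m_1 - (p-1)m_1 = (p^2 - p + 1)\, m_1,
\]
which matches both equalities stated in the corollary (the first being the specialization of the proposition's formula, the second being the simplification under minimality).

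Since the result is obtained by plugging $n=2$ and the relation $m_2 = p m_1$ into an already-established formula, there is no genuine obstacle here; the only thing to be careful about is correctly identifying that the minimality of the conductor data in Definition~\ref{defnminimalcyclic} is what produces the precise relation $m_2 = p m_1$ between the upper jumps. I would therefore state this translation explicitly at the start of the proof so that the corollary reads as a clean substitution into the previous proposition rather than as an independent computation.
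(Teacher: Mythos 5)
Your proposal is correct and follows exactly the paper's route: the corollary is obtained by specializing the preceding proposition to $n=2$, so that $\widetilde{m}=pm_2-(p-1)m_1$, and then using the minimality relation $m_2=pm_1$ (coming from $e_{2}-1=p(e_{1}-1)$ in Definition~\ref{defnminimalcyclic}) to simplify to $m_1(p^2-p+1)$. Your explicit remark translating conductors to upper jumps is a helpful clarification of a step the paper leaves implicit, but the argument is the same.
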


\begin{remark}
    The jump $\widetilde{m}$ does not divide $p-1$, hence is not covered by \cite{MR3090462}. 
\end{remark}

Let's now recall the following results for the $a$-number of a $\mathbb{Z}/p\mathbb{Z}$-cover using the language of branching data.

\begin{theorem}[{\cite[Theorem 1.1]{MR4113776}}] \label{thmBC}
Suppose $\phi: Y \rightarrow X$ is a $\mathbb{Z}/p\mathbb{Z}$-Galois cover with branching datum
    \begin{equation*}
        \begin{bmatrix}
            d_{1,1} +1 & d_{2,1}+1 & \ldots & d_{r,1}+1 \\
    \end{bmatrix}^\top.
    \end{equation*}
Then the $a$-number of $Y$ is bounded as follows
\begin{equation*}
    \sum_{l=1}^r \sum_{i=j}^{p-1} \bigg(  \bigg\lfloor \frac{id_{l,1}}{p} \bigg\rfloor - \bigg\lfloor \frac{id_{l,1}}{p} -\bigg(1 -\frac{1}{p} \bigg) \frac{jd_{l,1}}{p} \bigg\rfloor  \bigg) \le a_Y \le pa_X+\sum_{l=1}^r \sum_{i=1}^{p-1} \bigg(  \bigg\lfloor \frac{id_{l,1}}{p} \bigg\rfloor - (p-i) \bigg\lfloor \frac{id_{l,1}}{p^2} \bigg\rfloor  \bigg).
\end{equation*}
\end{theorem}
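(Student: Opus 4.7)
The strategy is to convert the $a$-number question into a rank computation for the Cartier operator, since $a_Y = g_Y - \rk(\C)$ and $g_Y$ is determined by the branching datum through Riemann--Hurwitz. I would work with the explicit basis of $\cohom^0(Y, \Om_Y^1)$ of Madden (Proposition~\ref{propMadden}): differentials of the form $y^a x^v\, dx$ on the affine patch and $y^a x_l^v\, dx$ near each branch point $P_l$, with indices $(a,v)$ constrained by the branching datum. The Cartier operator acts on these via Cartier's formula $\C(h\,dx) = \bigl(-d^{p-1}h/dx^{p-1}\bigr)^{1/p}\,dx$, combined with the Artin--Schreier relation $y^p = y + f$, which is used to reduce powers of $y$ into the range $0 \le a < p$ at the cost of absorbing polynomials in $f$ into the coefficient.

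For the lower bound on $a_Y$, I would exhibit an explicit subspace of $\ker(\C)$ realizing the stated floor-function count. Expanding $y^a$ in terms of $y^b$ with $b < p$ produces a Laurent expansion whose $(p-1)$-st derivative contributes to $\C$ only through terms of degree $\equiv -1 \pmod p$ in the local coordinate $x_l$. For each branch point $P_l$ and each exponent $i$ of $y$, those basis elements whose image under $\C$ kills all such surviving terms form a subspace whose dimension is exactly the floor-function expression $\lfloor i d_{l,1}/p \rfloor - \lfloor i d_{l,1}/p - (1-1/p)(j d_{l,1}/p)\rfloor$. Summing over $l$ and $i \ge j$ then gives the lower bound on $a_Y$.

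For the upper bound, I would combine a global contribution with local ones. The pullback $\phi^* \cohom^0(X, \Om_X^1) \hookrightarrow \cohom^0(Y, \Om_Y^1)$ is compatible with $\C$, and the $G$-action decomposes $\cohom^0(Y, \Om_Y^1) = \bigoplus_{i=0}^{p-1} M_i$ into eigen-type summands (or an analogous filtration if $G$ does not act semisimply). The $i=0$ summand identifies with $\phi^*\cohom^0(X,\Om_X^1)$ up to a local correction and contributes at most $p\,a_X$ kernel elements, the factor $p$ reflecting the $p^{-1}$-linearity of $\C$. For $i \geq 1$, a local computation at each branch point $P_l$ bounds $\dim \ker \C|_{M_i}$ above by $\lfloor i d_{l,1}/p \rfloor - (p-i)\lfloor i d_{l,1}/p^2 \rfloor$, by tracking which monomials $x_l^v$ in the reduced form of $y^i \cdot (\text{polynomial in } x_l)\,dx$ can contribute a nonzero term after applying $\C$.

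The main obstacle is the local combinatorics at each branch point. Although Cartier's formula is explicit, one must iteratively reduce $y^a$ modulo $y^p - y - f$, take the $(p-1)$-st derivative, extract a $p$-th root, and re-express the result in the Madden basis; the many cross-terms depend on $d_{l,1}$ and $i$ through truncated binomial sums of the form $\sum_k \binom{a}{k} f^{a-k}$, whose $x_l$-degrees interact nontrivially with the differentiation. Matching the floor-function formulae in the upper and lower bounds is essentially a combinatorial accounting problem, and making the bounds as sharp as claimed requires careful control over which residues modulo $p^2$ actually arise as $x_l$-degrees in each summand $M_i$.
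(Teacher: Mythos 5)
You should first be aware that the paper contains no proof of Theorem~\ref{thmBC}: it is quoted verbatim from Booher--Cais \cite{MR4113776} and used only as an input (e.g.\ in Proposition~\ref{proplowerbound}), so there is no internal argument to compare with; what you have written is a sketch of how one might prove the cited result. Measured against that task, your outline points in a reasonable direction (reduce to a rank computation for $\C$ on an explicit basis, organize by the $\Z/p\Z$-action, do local analysis at branch points), but it has genuine gaps. The most serious structural one is the ``eigen-type'' decomposition $\bigoplus_i M_i$: in characteristic $p$ the group $\Z/p\Z$ has no nontrivial characters, a generator $\sigma$ acts unipotently, and one only has the filtration by $\ker\left((\sigma-1)^i\right)$. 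This is not cosmetic: in \cite{MR4113776} the graded pieces of this filtration are invertible sheaves on $X$ twisted by explicit divisors on the branch locus whose multiplicities are precisely the floor-type expressions in $i d_{l,1}/p$, and the quantitative content of both bounds comes from how $\C$ interacts with the filtration (it sends level $i$ into level roughly $\lceil i/p\rceil$, and one must check the resulting inequalities on kernels and ranks of the graded maps pass to $\ker(\C)$ on $\cohom^0(Y,\Om_Y^1)$ in the correct direction). Your parenthetical ``or an analogous filtration'' is exactly where the work lives, and the proposal does not supply it.

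Second, your lower-bound mechanism is mis-stated. You propose to ``exhibit an explicit subspace of $\ker(\C)$'' consisting of basis elements whose images vanish; but which Madden basis elements are killed by $\C$ depends on the coefficients of $f$, not only on the branching datum, and the $a$-number itself is not determined by the branching datum (see the example from \cite{MR4113776} cited in Section~\ref{secanumber}), so no such subspace can be written down uniformly. The uniform statement is a rank bound: $\C$ maps the span of the basis differentials with $y$-exponent at least $j$ into a space of differentials with controlled pole orders whose dimension is small independently of $f$, which forces a kernel of at least the stated size; this is also where the parameter $j$ in the theorem enters, a point your sketch never addresses. Similarly, the $pa_X$ term in the upper bound requires identifying a graded piece with (twists of) differentials on $X$ and comparing the induced map with $\C_X$, not merely appealing to $p^{-1}$-linearity. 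Finally, the local floor-function bookkeeping that you defer as ``a combinatorial accounting problem'' is the entire technical content of \cite{MR4113776}; as written, your proposal is a strategy outline rather than a proof.
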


\begin{proposition}[{\cite{MR3090462}}]
    Suppose a $\mathbb{Z}/p\mathbb{Z}$-cover $\phi: Y \rightarrow \mathbb{P}^1$ has branching datum
    \begin{equation*}
        \begin{bmatrix}
            d_{1,1}+1 & d_{2,1}+1 & \ldots & d_{r,1}+1  \\
    \end{bmatrix}^\top.
    \end{equation*}
    Suppose moreover that $d_{i,1} \mid (p-1)$ for all $i=1, 2, \ldots, r$. Then the $a$-number of $Y$ is
    \begin{equation*}
        a_Y=\sum_{i=1}^r a_i \text{, where } a_i=\begin{cases} 
      \frac{(p-1)d_{i,1}}{4} & \text{if $d_{i,1}$ is even} \\
      \frac{(p-1)(d^2_{i,1}-1)}{4} & \text{if $d_{i,1}$ is odd.} \\
   \end{cases}
    \end{equation*}
\end{proposition}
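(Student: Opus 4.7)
The plan is to apply the key-terms method described in the paper's comparison with Farnell-Pries, specialised to the $n=1$ case. First I would write down an explicit basis $W$ of $\cohom^0(Y,\Om^1_{Y/k})$ via Madden's description (Proposition~\ref{propMadden}): at each finite branch point $P_i$ the local contribution consists of differentials $y_1^a x_i^v\, dx$ with $0\le a\le p-1$ and $v$ in an explicit range depending on $a$ and $d_{i,1}$, together with an analogous contribution at the point above infinity. Substituting $e_{i,1}=d_{i,1}+1$ into the genus formula \eqref{eqngenera} then gives $g_Y = 1 - p + \tfrac{p-1}{2}\sum_i(d_{i,1}+1)$.

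Next I would apply the Cartier operator to each basis element. The Artin-Schreier relation $y_1^p=y_1+f_1$ lets one rewrite every power $y_1^N$ as a $k(x)$-linear combination of $1,y_1,\ldots,y_1^{p-1}$; the Cartier operator then acts on the rational pieces by the standard rule $\C\bigl(\sum_j c_j x^j\,dx\bigr)=\sum_j c_{jp+p-1}^{1/p}\, x^j\, dx$. To each $\om\in W$ I would assign a \emph{key term} $\ka(\C(\om))$, namely the term in $\C(\om)$ of largest pole order at the relevant branch point, and let $H\subset W$ be the set of $\om$ possessing such a term. The goal is then to verify the four properties \eqref{item:inj}--\eqref{item:surj} recalled in the paper, which together yield $\rk(\C)=\#H$ and therefore $a_Y=g_Y-\#H$.

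The main obstacle is property \eqref{item:non-0}: the coefficient of $\ka(\C(\om))$ inside $\C(\om)$ is, up to a $p$-th power, a multinomial coefficient arising from expanding a power of $f_1$, and a priori it can vanish modulo $p$. This is exactly where the hypothesis $d_{i,1}\mid(p-1)$ enters: the divisibility forces the index at which the multinomial coefficient is evaluated to fall at a position where it collapses to a non-vanishing binomial. Granted \eqref{item:non-0}, property \eqref{item:0} follows from the fact that the order $\prec$ is defined by pole order at $P_i$, property \eqref{item:inj} is immediate from the range $0\le a\le p-1$, and \eqref{item:surj} is obtained by reverse induction on pole order using \eqref{item:0}.

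Finally, I would enumerate $\#H$ as a lattice-point count in the variables $(a,v)$ at each branch point. Under the hypothesis $d_{i,1}\mid(p-1)$ the defining inequalities for $H$ become elementary, splitting into two cases according to the parity of $d_{i,1}$; subtracting the resulting count from $g_Y$ produces the closed formula stated in the proposition. The verification of \eqref{item:non-0} is where I expect essentially all the work to lie, since the remaining properties are formal consequences of the ordering once the non-vanishing is in hand.
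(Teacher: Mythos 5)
The paper does not prove this proposition at all --- it is quoted from \cite{MR3090462} --- so the comparison is with the Farnell--Pries argument, whose skeleton the paper recalls as properties \eqref{item:inj}--\eqref{item:surj}. Your plan is that argument, but you have misplaced where the work lies. In the $\mathbb{Z}/p\mathbb{Z}$ case the coefficient of the key term of $\C(\om)$ for $\om=y_1^{a}x_P^{v}dx$ is, up to a sign and a $p$-th root of the leading coefficient of $f_P$, the binomial coefficient $\binom{a}{\be_P(v)}$ with $0\le \be_P(v)\le a\le p-1$, so property \eqref{item:non-0} is automatic: no multinomial coefficient is in danger of vanishing, and the hypothesis $d_P\mid(p-1)$ is not needed to rescue it. That hypothesis instead makes $\g_P=(p-1)/d_P$ an integer, so that the leading term of $f_P^{\be_P(v)}x_P^{v}$ has exponent $d_P\be_P(v)+v\equiv\epsilon_P\pmod p$ and survives the Cartier operator as a single basis monomial, and it is what makes the final lattice-point count close up. The genuinely laborious steps are \eqref{item:0} and above all \eqref{item:surj}: showing that basis differentials without a key term, and the lower-order contributions of those with one, do not enlarge the image requires explicit case-by-case pole-order estimates (this is exactly what occupies Section~\ref{sec:p=3} of the present paper in the $p=3$, $n=2$ analogue); ``reverse induction on pole order'' is a slogan, not yet an argument, and \eqref{item:inj} also needs the short check that $(a,v)\mapsto(a-\be_P(v),\,v-d_P\al_P(v))$ is injective on $H_P$, rather than being immediate from $0\le a\le p-1$.

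A further warning: carry out your final count and it will not reproduce the displayed formula verbatim. For $p=7$ and a single branch point with jump $d_{1,1}=3$ one has $g_Y=6$, while the displayed odd case would give $a_Y=12>g_Y$; a direct computation for $y^7-y=x^3$ (basis $dx$, $y\,dx$, $y^2dx$, $y^3dx$, $x\,dx$, $xy\,dx$, with $\C(y^2dx)=dx$, $\C(y^3dx)=3y\,dx$ and all other basis images zero) gives $a_Y=4=(p-1)(d_{1,1}^2-1)/(4d_{1,1})$. So the odd case as printed is missing the factor $d_{i,1}$ in the denominator (compare \cite{MR3090462}), and a faithful execution of your plan produces that corrected value, not the formula as stated.
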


We now can compute the lower bound of the $a$-number for the case $Y_2 \xrightarrow{} Y_0$ is minimal-one-point cover whose first jump equals $1$.

\begin{proposition} \label{proplowerbound}
Suppose $p=2q+1$. Then
    \begin{equation*}
        \sum_{i=q+1}^{p-1} \bigg(  \bigg\lfloor \frac{i (p^2-p+1) }{p} \bigg\rfloor - \bigg\lfloor \frac{i(p^2-p+1)}{p} -\bigg(1 -\frac{1}{p} \bigg) \frac{(q+1)(p^2-p+1)}{p} \bigg\rfloor  \bigg)
    \end{equation*}
is equal to $\frac{p(p-1)^2}{4}$.
\end{proposition}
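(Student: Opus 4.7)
The plan is a direct arithmetic evaluation of each summand via the elementary identity
\[
\lfloor x \rfloor - \lfloor x - c \rfloor = \lfloor c \rfloor + \mathbf{1}_{\{x\} < \{c\}},
\]
valid for any real $c \ge 0$, where $\{t\}$ denotes the fractional part of $t$. Two simplifications feed into this: an explicit form for $\lfloor id/p \rfloor$, and an explicit decomposition of the shift into integer and fractional parts.

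First I would set $d := p^2 - p + 1$. Since $d \equiv 1 \pmod{p}$, we have $\frac{id}{p} = i(p-1) + \frac{i}{p}$, so for $1 \le i \le p-1$ the integer part of $id/p$ is $i(p-1)$ and its fractional part is exactly $i/p$.

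Next I would simplify the shift
\[
c := \Bigl(1 - \tfrac{1}{p}\Bigr)\cdot\frac{(q+1)\,d}{p} = \frac{(p-1)(q+1)(p^2-p+1)}{p^2}.
\]
Using $q+1 = (p+1)/2$ together with the factorization $(p+1)(p^2-p+1) = p^3+1$, this becomes $\frac{(p-1)(p^3+1)}{2p^2}$. Expanding $(p-1)(p^3+1) = p^3(p-1) + (p-1)$ and dividing by $2p^2$ yields
\[
c = \frac{p(p-1)}{2} + \frac{p-1}{2p^2},
\]
so $\lfloor c \rfloor = \frac{p(p-1)}{2}$ and $\{c\} = \frac{p-1}{2p^2}$.

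Applying the identity with $x = id/p$, each summand equals $\lfloor c \rfloor + \mathbf{1}_{i/p < (p-1)/(2p^2)}$. Since $\frac{p-1}{2p^2} < \frac{1}{p} \le \frac{i}{p}$ for every $i \ge 1$, the indicator vanishes throughout the summation range, so every summand is exactly $\frac{p(p-1)}{2}$. The sum has $(p-1) - q = q = \frac{p-1}{2}$ terms, yielding total $\frac{p-1}{2} \cdot \frac{p(p-1)}{2} = \frac{p(p-1)^2}{4}$, as claimed. There is no substantive obstacle to this argument; the main point requiring care is only the fractional-part comparison $\{c\} < 1/p$, which is immediate from the explicit expressions above.
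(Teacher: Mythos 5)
Your proposal is correct and follows the same route as the paper, which simply asserts that a straightforward computation shows each summand equals $q(2q+1)$ and that there are $q$ summands; your constant $\tfrac{p(p-1)}{2}$ is exactly $q(2q+1)$ under $p=2q+1$. Your write-up merely supplies the details (the floor identity, the factorization $(p+1)(p^2-p+1)=p^3+1$, and the fractional-part comparison) that the paper leaves implicit.
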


\begin{proof}
    By a straightforward computation, we can indeed see that each summand is equal to $q(2q+1)$. Since there are $q$ such summands, the result follows immediately.
\end{proof}

Suppose $p = 2q + 1$ and $X$ is ordinary. Suppose $\phi_2: Y_2 \to X$ is a minimal $\mathbb{Z}/p^2\mathbb{Z}$-cover branched at one point $P$, with first jump $d_{1,1}$. Let $\tilde{d}_{1,1}$ be the ramification jump of the intermediate subextension $Y_2 \to Y_2 / (\mathbb{Z}/p)$, corresponding to the unique order-$p$ subgroup of the Galois group of $Y_2 \to X$, at the point above $P$. Combining Theorem~\ref{thmBC} and Proposition~\ref{proplowerbound}, we obtain the following result.

\begin{corollary}
    In the above setting, we have
    \begin{equation*}
    \sum_{i=q+1}^{p-1} \left( \left\lfloor \frac{i \tilde{d}_{1,1}}{p} \right\rfloor - \left\lfloor \frac{i \tilde{d}_{1,1}}{p} - \left( 1 - \frac{1}{p} \right) \frac{j \tilde{d}_{1,1}}{p} \right\rfloor \right) \le a_{Y_2} 
        \le p a_{Y_1} + \sum_{i=1}^{p-1} \left( \left\lfloor \frac{i \tilde{d}_{1,1}}{p} \right\rfloor - (p-i) \left\lfloor \frac{i \tilde{d}_{1,1}}{p^2} \right\rfloor \right).
    \end{equation*}
    In particular, when $d_{1,1} = 1$, hence $\tilde{d}_{1,1} = p^2 - p + 1$, and $X \cong \mathbb{P}^1$, with $a_{Y_1} = 0$, we have
    \begin{equation*}
        \frac{p(p-1)^2}{4} \le a_{Y_2}.
    \end{equation*}
\end{corollary}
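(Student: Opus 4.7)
The plan is to apply Theorem~\ref{thmBC} to the topmost Artin-Schreier layer $\phi_{2/1}\colon Y_2 \to Y_1$ of the tower, and then to substitute the value of $\tilde{d}_{1,1}$ produced by the jump-transfer formula already established in this subsection. Since $\phi_2$ is branched only at $P$, the cover $\phi_{2/1}$ is branched only at the unique point $P_1 \in Y_1$ lying above $P$, so the outer sum over branch points in the statement of Theorem~\ref{thmBC} collapses to a single term.

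Next I would identify the upper jumps of $\phi_2$ at $P$. Definition~\ref{defnminimalcyclic}, applied to a one-point minimal cover with first jump $d_{1,1}$, forces $(m_1, m_2) = (d_{1,1},\, p\, d_{1,1})$. Substituting these values into the formula $\widetilde{m}_n = p^{n-1} m_n - \sum_{i=1}^{n-1}(p-1) p^{i-1} m_i$ with $n = 2$ yields
\[
\tilde{d}_{1,1} \;=\; p m_2 - (p-1) m_1 \;=\; d_{1,1}(p^2 - p + 1).
\]
Feeding this quantity into the upper and lower inequalities of Theorem~\ref{thmBC}, applied to $\phi_{2/1}$, immediately produces the first displayed chain of inequalities in the corollary.

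For the specialization, observe that when $X \cong \mathbb{P}^1$ and $d_{1,1} = 1$, the intermediate cover $\phi_1\colon Y_1 \to \mathbb{P}^1$ is a one-point minimal $\mathbb{Z}/p\mathbb{Z}$-cover whose unique jump is $1$. Since $1$ is odd and divides $p-1$, the formula of \cite{MR3090462} quoted just above yields $a_{Y_1} = (p-1)(1^2 - 1)/4 = 0$, so $Y_1$ is ordinary and the $p\, a_{Y_1}$ term drops out of the upper bound. With $\tilde{d}_{1,1} = p^2 - p + 1$ and $j = q+1$ in the lower bound of Theorem~\ref{thmBC}, the remaining expression is precisely the sum evaluated in Proposition~\ref{proplowerbound}, which equals $p(p-1)^2/4$. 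Since the whole argument is an assembly of results already established in this subsection, no single step presents a serious obstacle; the only care needed is in verifying that the branch locus of $\phi_{2/1}$ truly consists of a single point and that the auxiliary parameter $j$ in the Booher-Cais lower bound may be freely specialized to $q+1$.
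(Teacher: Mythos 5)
Your proposal is correct and follows essentially the same route as the paper: the corollary is obtained by applying Theorem~\ref{thmBC} to the single-point Artin--Schreier layer $Y_2 \to Y_1$, with the jump $\tilde{d}_{1,1} = d_{1,1}(p^2-p+1)$ supplied by the jump-transfer formula (minimality giving $(m_1,m_2)=(d_{1,1},pd_{1,1})$), and the specialization $d_{1,1}=1$, $a_{Y_1}=0$, $j=q+1$ handled by Proposition~\ref{proplowerbound}. Your extra verifications (unique branch point of $Y_2\to Y_1$, vanishing of $a_{Y_1}$ via the Farnell--Pries formula, and the choice of the parameter $j$) simply make explicit what the paper leaves implicit.
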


Since we expect the $a$-number of a minimal $\Z/p^n\Z$-cover of $\PP^1$ to be determined by the branching datum, and this lower bound is attained in examples, we conjecture that this lower bound is always sharp for this class of curves. On the other hand, in the case $d_{1,1}>1$ the lower bound for the $a$-numbers of minimal $\Z/p^2\Z$-covers obtained by applying \ref{thmBC} twice does not seem to be sharp.

\section{Key terms} \label{Sec:Keyterms}

Let $\phi_2: Y_2 \to \mathbb{P}^1$ be a minimal $\Z/p^2\Z$ cover, with subcover $\phi_1: Y_1 \to \PP^1$. Using the identification $k(\PP^1) \cong k(x)$ and Proposition~\ref{proptowerAS}, we obtain the models
\begin{align}
    Y_1:y_1^p-y_1 &= f(x)  \label{eqmodelY1}\\
    Y_2:y_2^p-y_2 &= g_1(y_1)+h(x). \label{eqmodelY2}
\end{align}
For brevity we write $g(y_1):=g_1(y_1)$. Explicitly this polynomial is given by
\begin{equation} \label{eqdefg}
 g(y_1):= \sum_{i=1}^{p-1} (-1)^{i} \frac{(p-1)!}{i!(p-i)!} y_1^{p(p-i)+i}.
\end{equation}
Equivalently, the cover $Y_2 \to \mathbb{P}^1$ is represented by the Witt vector $(f,h) \in W_2(k(x))$, with $(f,h)$ reduced in the sense of Proposition~\ref{propreducedrep}.

In this section, we define for certain regular differentials $\om \in \cohom^0(Y_2, \Om_{Y_2}^1)$ a \emph{key term} $\ka(\C(\om))$, analogous to \cite[Definition 3.2]{MR3090462}. An analysis of the key terms leads to a lower bound for the rank of the Cartier-Manin matrix, or equivalently an upper bound for the $a$-number $a_{Y_2}$. This upper bound is given in Theorem~\ref{thmrkCgeqK}.

Denote by $B_2$ the branch locus of the cover $\phi_2: Y_2 \to \PP^1$. For $P \in B_2$, let $x_P \in k(x)$ be such that $\ord_P(x_P)=-1$. Then the space $\cohom^0(Y_2,\Om_{Y_2}^1)$ is spanned by differentials of the form $y_2^{a_2} y_1^{a_1} x_P^v dx$, as described in Proposition~\ref{propMadden}. We consider two cases, depending on whether the cover $\phi_1: Y_1 \to \PP^1$ is already branched at $P$. 

If $P$ is a branch point of $\phi_1: Y_1 \to \PP^1$, with ramification jump $d_P$, then $f$ has a pole at $P$ of order $d_P$. To simplify proofs, we pick $f$ to have no further poles. Since the cover $\phi_2: Y_2 \to \PP^1$ is minimal, it follows that $d_P | p-1$. This case is discussed in Section~\ref{subsec:ktpolef}.

If $\phi_1: Y_1 \to \PP^1$ is \'{e}tale at $P$, then $f$ is regular at $P$. By the assumption that $\phi_2: Y_2 \to \PP^1$ is branched at $P$, the function $h \in k(x)$ has a pole at $P$. The order of this pole, which we denote by $e_P$, equals the ramification jump at $P$ of the cover $Y_2 \to Y_1$. This case is treated in Section~\ref{subsec:ktpoleh}.

Finally, combining both types of key terms yields the conditional upper bound on the $a$-number proved in Theorem~\ref{thmrkCgeqK}. This conditional upper bound is strong enough to prove Conjecture~\ref{conjnumberminimal} in the case $p=3$ and $n=2$.

\subsection{Key terms at poles of \texorpdfstring{$f$}{}} \label{subsec:ktpolef}

Let $B_1 \subset \PP^1$ be the branch locus of $\phi_1: Y_1 \to \PP^1$ and let $P \in B_1$. After using an automorphism of $\PP^1$ if necessary, we may assume $\infty \in B_1$. Since the cover is minimal (see Definition~\ref{defnminimalcyclic}), the pole orders at the points $P\in B_1$, defined by $d_P=-\ord_P(f)$, all divide $p-1$. Furthermore, since the upper jumps are minimal by Definition~\ref{defnminimalcyclic}, one may assume the pole order of $h$ at $P$ is bounded: $\ord_P(h) >- pd_P$. We write $x_\infty = x$ and $x_P=(x-P)^{-1}$. Then partial fraction decomposition allows us to write

\begin{equation} \label{eqf=sumf_P}
    f(x)= \sum_{P \in B} f_P(x_P),
\end{equation}
where $f_P \in k[x_P]$ is a polynomial of degree $d_P$. Let $u_P \in k^\times$ denote the leading coefficient of $f_P$. Without loss of generality, we assume $u_\infty=1$.

Using Proposition~\ref{propMadden}, we obtain the following formulas for $W_P$.
\begin{equation} 
\begin{aligned} \label{eqdefWinf}
    W_\infty:= \begin{cases} y_2^{a_2} y_1^{a_1} x^v dx \; | \; 0 \leq a_1,a_2 <p, \\  
       0 \leq p^2 v \leq  pd_P(p-1-a_1) + d_P(p^2-p+1)(p-1-a_2)-p^2-1 \end{cases}  \Bigg\}.
\end{aligned}
\end{equation}

\begin{equation}
\begin{aligned} \label{eqdefWP}
    W_P:= \begin{cases} y_2^{a_2} y_1^{a_1} x_P^v dx \; | \; 0 \leq a_1,a_2 <p, \\  
       0 < p^2 v \leq  pd_P(p-1-a_1) + d_P(p^2-p+1)(p-1-a_2)+p^2-1 \end{cases}  \Bigg\}.
\end{aligned}
\end{equation}

Following \cite{MR3090462}, define $\epsilon_P=-1$ if $P=\infty$ and $\epsilon_P=1$ otherwise. Then, as the Cartier operator on $k(x)dx$ is well understood, we obtain
\begin{equation*} \label{eqCxe_P}
\C(x_P^{ap+\epsilon_P} dx) = x_P^{a+\epsilon_P} dx.
\end{equation*}

The approach we take in this paper relies essentially on reducing the action of the Cartier operator on $\cohom^0(Y_2,\Omega_{Y_2}^1)$ to the Cartier operator on $k(x)dx$, through the models of $Y_2$ and $Y_1$.

Using $a_1,a_2 \geq 0$, we get
\begin{equation} \label{eqv<pd1}
v \leq \frac{1}{p^2}(pd_P(p-1)+d_P(p^2-p+1)(p-1)+\epsilon_Pp^2-1) < pd_P +\epsilon_P.
\end{equation}
The following lemma allows us to reduce computations involving the Cartier operator on $Y_2$ to computations involving the Cartier operator on $Y_1$.

\begin{lemma} \label{lemC(om)}
Let $\om:= y_2^{a_2} y_1^{a_1} x_P^v dx$ be an element of $\bigcup_{P \in B_1}W_P$. Then we have
\begin{equation} \label{eqCW}
\C(\om)= \sum_{0 \leq j+l \leq a_2} (-1)^{a_2-j} \binom{a_2}{j \, l} y_2^j \C(g(y_1)^l h(x)^{a_2-j-l} y_1^{a_1} x_P^v dx).
\end{equation}
\end{lemma}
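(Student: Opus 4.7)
The proof is a direct computation that exploits the Artin-Schreier relation defining $Y_2$ together with the semilinearity of the Cartier operator with respect to $p$-th powers, namely $\C(u^p\,\eta) = u\,\C(\eta)$ for any rational function $u$ on $Y_2$ and any differential $\eta$.

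The plan is to rewrite the factor $y_2^{a_2}$ in $\om$ so that, after expansion, each resulting term contains a $p$-th power of a function on $Y_2$ that can be pulled out of $\C$. Starting from the model \eqref{eqmodelY2} of $Y_2$, the relation $y_2^p - y_2 = g(y_1) + h(x)$ gives
\begin{equation*}
y_2 = y_2^p - g(y_1) - h(x).
\end{equation*}
Raising both sides to the $a_2$-th power and applying the multinomial theorem (note $0 \le a_2 < p$, so all multinomial coefficients are well-defined in $k$), I would write
\begin{equation*}
y_2^{a_2} = \sum_{\substack{j+l+m = a_2 \\ j,l,m \ge 0}} \binom{a_2}{j\ l\ m} (y_2^p)^{j} (-g(y_1))^{l} (-h(x))^{m}.
\end{equation*}
Setting $m = a_2 - j - l$, this becomes
\begin{equation*}
y_2^{a_2} = \sum_{0 \le j+l \le a_2} (-1)^{a_2 - j} \binom{a_2}{j\ l} y_2^{pj}\, g(y_1)^{l}\, h(x)^{a_2 - j - l}.
\end{equation*}

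Next, I would multiply both sides by $y_1^{a_1} x_P^v\, dx$, apply $\C$ termwise using its $k$-additivity, and finally use the identity $\C(y_2^{pj}\,\eta) = y_2^{j}\,\C(\eta)$ (valid since $y_2^{pj} = (y_2^{j})^{p}$) to pull the factor $y_2^{pj}$ outside each Cartier operator. This yields exactly \eqref{eqCW}.

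There is no real obstacle here: the identity is essentially a bookkeeping exercise combining the defining Artin-Schreier equation of $Y_2$ with the standard compatibility of $\C$ with $p$-th powers. The one point requiring care is the sign tracking in the multinomial expansion, i.e.\ confirming that the combined signs from $(-g(y_1))^{l}$ and $(-h(x))^{a_2 - j - l}$ collapse to $(-1)^{a_2 - j}$, which is immediate since $l + (a_2 - j - l) = a_2 - j$. Once this is verified, the stated formula follows without further hypotheses on $\om$ beyond membership in $\bigcup_{P \in B_1} W_P$, as the argument uses no special properties of $a_1$ or $v$.
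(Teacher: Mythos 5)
Your proof is correct and matches the paper's argument exactly: the paper likewise substitutes $y_2 = y_2^p - g(y_1) - h(x)$ from the model \eqref{eqmodelY2}, expands $(y_2^p - g(y_1) - h(x))^{a_2}$ multinomially with the same sign bookkeeping, and then pulls the factor $y_2^{pj}$ out of $\C$ by $p^{-1}$-linearity. Nothing further is needed.
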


\begin{proof}
Substituting $y_2=y_2^p-g(y_1)-h(x)$, from Equation \eqref{eqmodelY2} and expanding gives
\begin{align*}
\om&=(y_2^p-g(y_1)-h(x))^{a_2} y_1^{a_1} x_P^v dx \\
&= \sum_{0 \leq j+l \leq a_2} (-1)^{a_2-j} \binom{a_2}{j \, l} y_2^{pj} g(y_1)^{l} h(x)^{a_2-j-l} y_1^{a_1} x_P^v dx.
\end{align*}
Then, using the $p^{-1}$-linearity of the Cartier operator, we can pull out the $y_2$ to obtain
\begin{align*}
\C(\om) &= \C\left(\sum_{0 \leq j+l \leq a_2} (-1)^{a_2-j} \binom{a_2}{j \; l} y_2^{pj} g(y_1)^{l} h(x)^{a_2-j-l} y_1^{a_1} x_P^v dx \right) \\ 
&= \sum_{0 \leq j+l \leq a_2} (-1)^{a_2-j} \binom{a_2}{j \, l} y_2^{j} \C(g(y_1)^{l} h(x)^{a_2-j-l} y_1^{a_1} x_P^v dx),
\end{align*}
as desired.
\end{proof}

In order to compute the $a$-number of $Y_2$, we bound the rank of $\C$ from below. We do so using a differential we'll call the \emph{key term}. Recall that we have assumed that $d_P | p-1$, allowing us to define $\gamma_P := \frac{p-1}{d_P} \in \mathbb{Z}$. 

\begin{definition} \label{defkeyterm}
Suppose $P \in B_1$. Let 
\begin{align*}
\al_P(v)&:=\left \lfloor \frac{\g_P(v-\epsilon_P)}{p} \right \rfloor\\
\be_P(v)&:=\g_P(v-\epsilon_P)-p\al_P(v). 
\end{align*}
We define 
\begin{equation*}
H_P := \{ y_2^{a_2} y_1^{a_1} x_P^v dx \in W_P \; | \; \be_P(v) \leq a_1+pa_2 \}
\end{equation*}
For $\om:= y_2^{a_2} y_1^{a_1} x_P^v dx \in H_P$, we define the associated \emph{key term} $\ka(\C(\om))$ as follows:
\begin{equation} \label{eqdefkeyterm}
\ka(\C(\om)):= \begin{cases} 
y_2^{a_2}y_1^{a_1-\be_P(v)}x_P^{v-d_P\al_P(v)} dx  &\hbox{\textup{if} $a_1 \geq \be_P(v)$} \\
y_2^{a_2-1}y_1^{a_1-\be_P(v)+p}x_P^{v-d_P\al_P(v)} dx &\hbox{\textup{if} $a_1 < \be_P(v)$}.
\end{cases}
\end{equation}
\end{definition}

Note that $v-d_P\al_P(v)$ cannot be negative. In fact, it is straightforward to verify $\ka(\C(\om))\in W_P$ for each $\om \in H_P$. Observe also that $0\leq \be_P(v) <p$.

Our goal is to show that in certain cases the coefficient of $\ka(\C(\om))$ is non-zero in $\C(\om)$. The following lemma gives a formula for this coefficient.

\begin{lemma} \label{lemktcom}
Let $\om=y_2^{a_2}y_1^{a_1}x_P^v dx \in H_P$. Then the coefficient of $\ka(\C(\om))$ in $\C(\om)$ is 
\begin{equation} \label{eqc_om}
    c_\om:= \begin{cases}
    (-1)^{\be_P(v)} u_P^{\be_P(v)/p} \binom{a_1}{\be_P(v)} &\hbox{\textup{if} $a_1 \geq \be_P(v)$} \\
    u_P^{\be_P(v)/p} \sum_{i=\be_P(v)-a_1}^{p-1-a_1} (-1)^{\be_P(v)+i+1} \frac{(p-1)!}{i!(p-i)!} \binom{a_1+i}{\be_P(v)} &\hbox{\textup{if} $a_1<\be_P(v)$.}
    \end{cases}
\end{equation}
\end{lemma}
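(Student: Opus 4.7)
The plan is to apply Lemma~\ref{lemC(om)} to expand $\C(\om)$ as a sum indexed by $(j,l)$ and then to isolate the summands contributing to the basis differential $\ka(\C(\om))$. Since the inner Cartier image $\C(g(y_1)^l h^{a_2-j-l} y_1^{a_1} x_P^v dx)$ contains no $y_2$, every summand has $y_2$-exponent equal to $j$, so matching the $y_2$-exponent of the key term forces $j = a_2$ (and hence $l=0$) in Case 1, and $j = a_2-1$ with $l \in \{0,1\}$ in Case 2.

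In Case 1 the task reduces to computing the coefficient of $y_1^{a_1-\be_P(v)} x_P^{v-d_P\al_P(v)} dx$ in $\C(y_1^{a_1} x_P^v dx)$. Using $y_1 = y_1^p - f$ from \eqref{eqmodelY1}, I would expand $y_1^{a_1} = (y_1^p - f)^{a_1}$ binomially and invoke the $p^{-1}$-linearity of $\C$ to obtain $\C(y_1^{a_1} x_P^v dx) = \sum_k \binom{a_1}{k}(-1)^{a_1-k} y_1^k \,\C(f^{a_1-k} x_P^v dx)$. Choosing $k = a_1 - \be_P(v)$ selects the desired $y_1$-exponent and extracts the factor $\binom{a_1}{\be_P(v)}(-1)^{\be_P(v)}$. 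The remaining $x_P^{v-d_P\al_P(v)} dx$-coefficient of $\C(f^{\be_P(v)} x_P^v dx)$ comes from the leading $P$-term of $f^{\be_P(v)}$, namely $u_P^{\be_P(v)} x_P^{d_P\be_P(v)}$: the identity $v + d_P\be_P(v) = p(v - \epsilon_P - d_P\al_P(v)) + \epsilon_P$, which falls out of the definitions of $\al_P$ and $\be_P$, places precisely this monomial in the pre-image of $x_P^{v-d_P\al_P(v)} dx$ under $\C$, while all lower-order monomials of $f^{\be_P(v)}$ land at different $x_P$-powers or at exponents incongruent to $\epsilon_P \pmod p$. The $p^{-1}$-linearity then contributes the factor $u_P^{\be_P(v)/p}$, giving the Case 1 formula.

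For Case 2 I would first dispose of $l = 0$: the expansion of $y_1^{a_1}$ inside $\C(h y_1^{a_1} x_P^v dx)$ produces only $y_1$-exponents in $[0, a_1]$, strictly below the target $a_1 + p - \be_P(v)$, so this summand makes no contribution. For $l = 1$, substituting the explicit form \eqref{eqdefg} of $g$ and using $\C(y_1^{p(p-i)} \eta) = y_1^{p-i} \C(\eta)$ yields $\C(g(y_1) y_1^{a_1} x_P^v dx) = \sum_{i=1}^{p-1} (-1)^i \frac{(p-1)!}{i!(p-i)!} y_1^{p-i} \,\C(y_1^{i+a_1} x_P^v dx)$. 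When $i + a_1 < p$, the analogous binomial expansion produces $y_1$-exponents of the form $(p-i) + k$, and matching to $a_1 + p - \be_P(v)$ forces $k = i + a_1 - \be_P(v)$, which is admissible exactly for $i \in [\be_P(v) - a_1,\,p-1-a_1]$. The corresponding binomial factor is $\binom{i + a_1}{\be_P(v)}$, the sign $(-f)^{\be_P(v)}$ contributes $(-1)^{\be_P(v)}$, and the Case 1 analysis of $\C(f^{\be_P(v)} x_P^v dx)$ supplies $u_P^{\be_P(v)/p}$; combining with the prefactor $(-1)^{a_2-j}\binom{a_2}{j,l}$ from Lemma~\ref{lemC(om)} consolidates the signs into the stated $(-1)^{\be_P(v) + i + 1}$.

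The main technical obstacle will be confirming that the complementary branch $i + a_1 \geq p$ in Case 2 contributes nothing to the key term. There the further reduction $y_1^{i+a_1} = (y_1 + f) y_1^{i+a_1-p}$ introduces only $y_1$-exponents bounded by $a_1 + 1$ inside the Cartier image, so after multiplication by $y_1^{p-i}$ the attainable exponents lie in $[p-i,\,p-i+a_1+1]$; a short congruence check shows that the target $a_1 + p - \be_P(v)$ is reachable only when $\be_P(v) = p - 1$, in which case the would-be additional contribution either vanishes or already coincides with a term in the stated range of $i$. Once this case analysis is in hand, what remains is purely sign bookkeeping, gathering $(-1)^{a_2-j}$, $(-1)^i$ and $(-1)^{\be_P(v)}$ into the single factor $(-1)^{\be_P(v)+i+1}$ appearing in \eqref{eqc_om}.
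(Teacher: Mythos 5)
Your route is the paper's route: expand $\C(\om)$ via Lemma~\ref{lemC(om)}, force $j=a_2$, $l=0$ (resp.\ $j=a_2-1$, $l\in\{0,1\}$) by matching the $y_2$-exponent, expand the $y_1$-powers through $y_1^p=y_1+f$, match $m=\be_P(v)$, and extract the coefficient $u_P^{\be_P(v)/p}$ from the leading monomial of $f_P^{\be_P(v)}$ via the identity $d_P\be_P(v)+v=p(v-\epsilon_P-d_P\al_P(v))+\epsilon_P$; the sign bookkeeping and the admissible range $\be_P(v)-a_1\le i\le p-1-a_1$ also agree with the paper. (One shared, harmless slip: for $j=a_2-1$ the prefactor $\binom{a_2}{j\,l}$ equals $a_2$, not $\pm1$, so the second case of \eqref{eqc_om} really carries an extra factor $a_2$, which is nonzero since $1\le a_2\le p-1$ and affects nothing downstream.)

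The genuine gap is in your handling of the residual terms, the very point you flag as the main obstacle. First, the case split is drawn in the wrong place: unreduced $y_1$-exponents $\ge p$ occur for \emph{every} $i$, namely for all terms with $m\le a_1$ (exponent $p+a_1-m\ge p$), not only when $i+a_1\ge p$. Second, and more seriously, your conclusion for the critical case $\be_P(v)=p-1$ — that the would-be extra contribution ``either vanishes or already coincides with a term in the stated range of $i$'' — is a hedge, not an argument; if the second alternative occurred, the stated value of $c_\om$ would simply be wrong. The case does close, and cleanly: reducing $y_1^{p+a_1-m}=(y_1+f)\,y_1^{a_1-m}$ shows the target exponent $a_1+p-\be_P(v)=a_1+1$ is reachable only with $m=0$, giving the term $y_1^{a_1+1}\,\C(x_P^v\,dx)$; but $\be_P(v)=p-1$ forces $\g_P(v-\epsilon_P)\equiv-1\pmod p$, hence $v\not\equiv\epsilon_P\pmod p$ and $\C(x_P^v\,dx)=0$, so no extra contribution survives (for $\be_P(v)<p-1$ the reduced exponents, all $\le a_1+1$, cannot reach the target at all). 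For what it is worth, the paper's own proof dismisses these residual terms with ``one goes through the same steps,'' so you correctly isolated a real subtlety — you just did not finish it.
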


\begin{proof}
By Equation~\eqref{eqCW}, we obtain
$$ \C(\om)= \sum_{0 \leq j+l \leq a_2} (-1)^{a_2-j} \binom{a_2}{j \, l} y_2^j \C(g(y_1)^l h(x)^{a_2-j-l} y_1^{a_1} x_P^v dx).$$
We split the proof into the two cases of Definition~\ref{defkeyterm}: $a_1 \geq \be(v)$ and $a_1<\be(v)$.

First, suppose $a_1 \geq \be(v)$. To obtain the exponent of $y_2$ in $\ka(\C(\om))$, we must specialize to $j=a_2$ in Equation~\eqref{eqCW}, yielding the term
$$ y_2^{a_2} \C(y_1^{a_1} x_P^v dx) = \sum_{m=0}^{a_1} (-1)^m \binom{a_1}{m} y_2^{a_2} y_1^{a_1-m} \C(f(x)^m x_P^v dx).$$
For the right exponent of $y_1$, we specialize to $m=\be_P(v)$. Then the leading term of $f_P(x_P)$ (recall Equation~\eqref{eqf=sumf_P}) gives the term
\begin{align*}
    \C(u_P^{\be_P(v)}x_P^{d_P\be_P(v)+v}dx) &= 
    u_P^{\be_P(v)/p} \C(x_P^{d_P(\g_P(v-\epsilon_P)-p\al_P(v))+v}dx) \\
    &= u_P^{\be_P(v)/p} \C(x_P^{(p-1)(v-\epsilon_P)+v-pd_P\al_P(v)}dx) \\
    &= u_P^{\be_P(v)/p} \C(x_P^{p(v-d_P\al_P(v)-\epsilon_P)+\epsilon_P}dx) \\
    &= u_P^{\be_P(v)/p} x_P^{v-d_P\al_P(v)}dx.
\end{align*}
This term is exactly $\ka(\C(\om))$. It is clear that this is the only contribution in $\C(\om)$ to $\ka(\C(\om))$. Therefore the coefficient of $\ka(\C(\om))$ in this case is
$$(-1)^{\be(v)} u_P^{\be_P(v)/p} \binom{a_1}{\be(v)}.$$

Now let us suppose $a_1 < \be(v)$. In that case we must set $j=a_2-1$ in Equation~\eqref{eqCW} to obtain the right exponent of $y_2$. Note that $a_2 \geq 1$ since $\om$ is an element of $H_P$. Now that $j=a_2-1$ has been specified, $l$ can still be $0$ or $1$, giving two terms:
$$y_2^{a_2-1} \left(-\C(y_1^{a_1}h(x)x_P^vdx) - \C(g(y_1)y_1^{a_1} x_P^v dx)  \right). $$
Since $a_1<a_1-\be(v)+p$ and $\C$ cannot increase the exponent of $y_1$, the first term cannot contribute to $\ka(\C(\om))$. For the second term, recall Equation~\eqref{eqdefg} for $g(y_1)$. We substitute to get
\begin{align*} 
&\sum_{i=1}^{p-1} (-1)^{i+1} \frac{(p-1)!}{i!(p-i)!} \C(y_1^{p(p-i)+i+a_1}x_P^vdx) \\
=  &\sum_{i=1}^{p-1} (-1)^{i+1} \frac{(p-1)!}{i!(p-i)!} y_1^{p-i} \C(y_1^{i+a_1}x_P^vdx) \\
= &\sum_{i=1}^{p-1} (-1)^{i+1} \frac{(p-1)!}{i!(p-i)!} \sum_{m=0}^{i+a_1} (-1)^m \binom{a_1+i}{m} y_1^{p-m+a_1} \C(f(x)^m x_P^v dx).
\end{align*}
For the exponent of $y_1$, we must set $m=\be_P(v)$. For the binomial coefficients to be non-zero, we must have $\be_P(v)-a_1 \leq i \leq p-1-a_1$. This choice of $m$ and the leading term of $f_P(x_P)$ yields the term
$$\sum_{i=\be_P(v)-a_1}^{p-1-a_1} \frac{(p-1)!}{i!(p-i)!} (-1)^{\be_P(v)+i+1} \binom{a_1+i}{\be_P(v)} y_2^{a_2-1} y_1^{a_1-\be(v)+p} \C(u_P^{\be_P(v)} x_P^{d_P\be(v)+v} dx).$$
As before, $\C(x_P^{d_P\be(v)+v}dx)=x^{v-d_P\al(v)},$ which finishes the proof that this is a contribution to $\ka(\C(\om))$. Again, one goes through the same steps to show that there is no other contribution to $\ka(\C(\om))$.
\end{proof}

To make future calculations easier, we give a sufficient criterion for $c_\om$ to be non-zero.

\begin{lemma} \label{lemcomnon-zero}
Let $P \in B_1$ and let $\om = y_2^{a_2}y_1^{a_1}x_P^vdx$ be a differential in $H_P$. If $\be_P(v) \leq a_1$ or $\be_P(v) \geq p-2$, then $c_\om$ is non-zero.
\end{lemma}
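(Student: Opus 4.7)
The plan is to apply the formula for $c_\om$ provided by Lemma~\ref{lemktcom} directly and, in each of the two branches of that formula, reduce the nonvanishing claim to a small mod-$p$ calculation. The common factor $u_P^{\be_P(v)/p}$ is automatically nonzero, since $u_P$ is the leading coefficient of a nonconstant polynomial $f_P$; so throughout it suffices to show the remaining combinatorial factor is nonzero in $\mathbb{F}_p$.

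First, in the branch $\be_P(v) \leq a_1$, Lemma~\ref{lemktcom} gives $c_\om = (-1)^{\be_P(v)} u_P^{\be_P(v)/p} \binom{a_1}{\be_P(v)}$. Since both $a_1$ and $\be_P(v)$ lie in $\{0,1,\ldots,p-1\}$ with $a_1 \geq \be_P(v)$, Lucas' theorem (or just a direct factorial argument) forces $\binom{a_1}{\be_P(v)} \not\equiv 0 \pmod{p}$. This handles the hypothesis $\be_P(v) \leq a_1$ completely, as well as the sub-hypothesis $\be_P(v) \geq p-2$ with $a_1 \geq \be_P(v)$.

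The remaining case is $a_1 < \be_P(v)$ and $\be_P(v) \in \{p-2, p-1\}$. Here I would first clean up the signs in the second branch of Lemma~\ref{lemktcom} using the standard Wilson-theorem identity $\frac{(p-1)!}{i!(p-i)!} \equiv \frac{(-1)^{i+1}}{i} \pmod{p}$ valid for $1 \le i \le p-1$. This cancels all the $(-1)^i$ factors inside the sum and reduces $c_\om$ modulo $p$ to
$$c_\om \equiv u_P^{\be_P(v)/p}(-1)^{\be_P(v)} \sum_{i=\be_P(v)-a_1}^{p-1-a_1} \frac{1}{i}\binom{a_1+i}{\be_P(v)}.$$
When $\be_P(v)=p-1$, the range of summation collapses to the single value $i=p-1-a_1$, yielding $\frac{1}{p-1-a_1}$, which is nonzero since $a_1 \leq p-2$. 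When $\be_P(v)=p-2$, the two terms $i=p-2-a_1$ and $i=p-1-a_1$ give $\binom{p-2}{p-2}=1$ and $\binom{p-1}{p-2}\equiv -1$, and the two-term sum telescopes to $\frac{1}{(p-2-a_1)(p-1-a_1)}$; this is nonzero because $a_1 \leq p-3$ forces both factors in the denominator to lie in $\{1,2,\ldots,p-1\}$.

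The main obstacle is not any conceptual difficulty but rather the mod-$p$ bookkeeping in the second branch: one has to spot the Wilson-type simplification that turns the alternating binomial sum into a small, manifestly nonzero rational expression. Once that is in place, the argument is a case check on the two allowed values of $\be_P(v)$, and each reduces to observing that the denominators of the final expression fall in the residue range $\{1,\ldots,p-1\}$.
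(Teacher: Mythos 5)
Your proof is correct and takes essentially the same route as the paper: apply the explicit formula of Lemma~\ref{lemktcom}, dispose of the case $\be_P(v)\le a_1$ via the binomial coefficient with both entries below $p$, and check the one- and two-term sums for $\be_P(v)=p-1$ and $\be_P(v)=p-2$ directly. The only difference is cosmetic: where the paper rules out cancellation in the $\be_P(v)=p-2$ case by a factorial manipulation ending in the contradiction $-1=-2$, your use of the identity $\frac{(p-1)!}{i!(p-i)!}\equiv\frac{(-1)^{i+1}}{i}\pmod p$ collapses the same two-term sum to $\frac{1}{(p-2-a_1)(p-1-a_1)}$, which is a slightly cleaner way to see the same nonvanishing.
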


\begin{proof}
If $\be_P(v)\leq a_1$, it is clear from Lemma~\ref{lemktcom} that $c_\om \neq 0$. If $\be_P(v)=p-1$, then $c_\om$ is a sum consisting of only one non-zero term, so again $c_\om \neq 0$.

In the case $\be_P(v) = p-2$, $c_\om$ is a sum with two terms. To be precise, $c_\om$ vanishes if and only if 
$$ \sum_{i=p-2-a_1}^{p-1-a_1} \frac{(-1)^i (a_1+i)!}{i!(p-i)!\be_P(v)!(a_1+i-\be_P(v))!} = 0.$$
This sum vanishes precisely when its two terms are each other's negative:
\begin{align*}
&\frac{(p-2)!}{(p-2-a_1)!(a_1+2)!\be_P(v)!(p-2-\be_P(v))!}  \\ 
= &\frac{(p-1)!}{(p-1-a_1)!(a_1+1)!\be_P(v)!(p-1-\be_P(v))!}, \intertext{whence}
&(p-2)!(p-1-a_1)!(a_1+1)!\be_P(v)!(p-1-\be_P(v))! \\
= &(p-1)!(p-2-a_1)!(a_1+2)!\be_P(v)!(p-2-\be_P(v))!.
\end{align*}
Using the assumption $\be_P(v)=p-2$ yields
$$(p-2-\be_P(v))!= (p-1-\be_P(v))! = 1.$$
Then dividing both sides by $(p-2)!(p-2-a_1)!(a_1+1)\be_P(v)!$ gives
\begin{align*}
(p-1-a_1) &= (p-1)(a_1+2) \\
-a_1 -1 &= -(a_1+2) \\
-1 &= -2.
\end{align*}
From this contradiction we infer that $c_\om \neq 0$ in the case $\be_P(v)=p-2$.
\end{proof}

\subsection{Key terms at poles of \texorpdfstring{$h$}{}} \label{subsec:ktpoleh}

Let $B_2 \subset \PP^1$ denote the branch locus of the cover $\phi_2: Y_2 \to \PP^1$. We consider a point $P \in B_2 \setminus B_1$. This implies that $f$ is regular at $P$ but $h$ has a pole at $P$. We denote the order of this pole by $e_P$. By minimality, we have $e_P | p-1$ and define $\delta_P = \frac{p-1}{e_P}$.

Proposition~\ref{propMadden} yields the set
\begin{equation}
\begin{aligned} \label{eqdefWPpoleh}
    W_P:= \begin{cases} y_2^{a_2} y_1^{a_1} x_P^v dx \; | \; 0 \leq a_1,a_2 <p, \\  
       0 < p v \leq  e_P(p-1-a_2) +p-1 \end{cases}  \Bigg\}.
\end{aligned}
\end{equation}
For a differential $\om = y_2^{a_2} y_1^{a_1} x_P^v dx \in W_P$, Lemma~\ref{lemC(om)} holds, as the proof is independent of whether or not $f$ has a pole at $P$. This allows us to define the key term of $\om$.

\begin{definition} \label{defktpoleh}
Let $\alpha_p(v):= \left \lfloor \frac{\delta_P(v-1)}{p} \right \rfloor$ and $\be_P(v):=\delta_P(v-1)-p\al_P(v)$. Define
$$H_P:= \{ y_2^{a_2} y_1^{a_1} x_P^v dx \in W_P \; | \; \be_P(v) \leq a_2 \}$$
For $\om := y_2^{a_2} y_1^{a_1} x_P^v dx \in H_P$ we define the associated key term $\ka(\C(\om))$ as
\begin{equation*} \label{eqktpoleh}
\ka(\C(\om)) := y_2^{a_2-\be_P(v)}y_1^{a_1} x_P^{v-e_P\al_P(v)} dx.
\end{equation*}
\end{definition}

\begin{lemma} \label{lemcompoleh}
Let $\om = y_2^{a_2} y_1^{a_1} x_P^v \in H_P$. Then the coefficient $c_\om$ of $\ka(\C(\om))$ in $\C(\om)$ is non-zero.
\end{lemma}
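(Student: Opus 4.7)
The approach mirrors that of Lemma~\ref{lemktcom}, but the regularity of $f$ at $P$ (which holds since $P \in B_2 \setminus B_1$) makes matters considerably cleaner. Applying Lemma~\ref{lemC(om)}, the $y_2$-exponent $a_2-\be_P(v)$ of the key term forces $j=a_2-\be_P(v)$ in the outer sum, leaving an inner sum over $l \in \{0,\ldots,\be_P(v)\}$. I would then identify, for each $l$, the coefficient of $y_1^{a_1} x_P^{v-e_P\al_P(v)}\,dx$ in $\C(g(y_1)^l h(x)^{\be_P(v)-l} y_1^{a_1} x_P^v \, dx)$, and collect the contributions.

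The crux is that each term with $l \geq 1$ contributes zero. To see this, reduce $g(y_1)^l y_1^{a_1}$ modulo $y_1^p = y_1 + f$ to $\sum_{m=0}^{p-1} A_{l,m}(x) y_1^m$; since $A_{l,m} \in k[f]$ and $f$ is regular at $P$, each $A_{l,m}$ is regular at $P$. The Farnell--Pries substitution $y_1^m=(y_1^p-f)^m$ used in Lemma~\ref{lemktcom} pulls the $y_1$'s out of $\C$ via $p^{-1}$-linearity, and the $y_1^{a_1}$-coefficient reduces to a sum of terms of the form $\C(A_{l,m} f^{m-a_1} h(x)^{\be_P(v)-l} x_P^v \, dx)$ with $m \geq a_1$. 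Because $A_{l,m}$ and $f$ are regular at $P$, the integrand has pole order at $P$ at most $e_P(\be_P(v)-l)+v$. Using the identity $e_P\be_P(v) = (p-1)(v-1) - pe_P\al_P(v)$, extracting $x_P^{v-e_P\al_P(v)}\,dx$ under $\C$ requires an integrand of $x_P$-exponent exactly $e_P\be_P(v)+v$, which is strictly larger than what is available when $l\geq 1$. Hence these contributions vanish.

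For $l=0$, $g(y_1)^0 y_1^{a_1} = y_1^{a_1}$ is already reduced; only the $m = a_1$ piece yields a $y_1^{a_1}$-term, reducing the calculation to $\C(h(x)^{\be_P(v)} x_P^v \, dx)$. Since the Cartier operator on $k(x)\,dx$ is local at $P$ and $h(x) = u_P x_P^{e_P} + (\text{lower order at }P)$, only the leading term $u_P^{\be_P(v)} x_P^{e_P\be_P(v)}$ of $h^{\be_P(v)}$ contributes, giving $u_P^{\be_P(v)/p} x_P^{v-e_P\al_P(v)}\,dx$. Combined with the outer multinomial coefficient $(-1)^{\be_P(v)} \binom{a_2}{a_2-\be_P(v),\,0}$, this yields
\begin{equation*}
c_\om = (-1)^{\be_P(v)} \binom{a_2}{\be_P(v)} u_P^{\be_P(v)/p},
\end{equation*}
which is non-zero since $0 \leq \be_P(v) \leq a_2 < p$ makes the binomial coefficient non-zero in $k$ and $u_P \in k^\times$. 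The main obstacle is the $l \geq 1$ analysis, which a priori looks combinatorially involved due to the reduction of $g(y_1)^l y_1^{a_1}$ modulo $y_1^p - y_1 - f$. The crucial observation is that regularity of $f$ at $P$ prevents any reduction step from amplifying the pole order at $P$, so a single pole-order estimate rules out all $l \geq 1$ contributions uniformly. This is the structural reason why $c_\om$ is unambiguously non-zero here, whereas Lemma~\ref{lemktcom} required a delicate binomial identity to rule out cancellation.
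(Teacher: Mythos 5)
Your proposal is correct and follows essentially the same route as the paper's proof: apply Lemma~\ref{lemC(om)}, force $j=a_2-\be_P(v)$, observe that for $l=0$ only the leading Laurent term of $h^{\be_P(v)}x_P^v$ at $P$ can produce $x_P^{v-e_P\al_P(v)}dx$, and rule out $l\geq 1$, so the single contribution $(-1)^{\be_P(v)}\binom{a_2}{\be_P(v)}$ times a $p$-th root of the leading coefficient is non-zero. The only differences are cosmetic: you spell out, via the pole-order estimate $e_P(\be_P(v)-l)+v<e_P\be_P(v)+v$ using regularity of $f$ at $P$, the step the paper dispatches with ``we do not get such a term if we pick $l>0$,'' you record the explicit value of $c_\om$, and you recycle the symbol $u_P$ (reserved in the paper for poles of $f$) for the leading polar coefficient of $h$ at $P$, which is harmless.
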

\begin{proof}
Recall Equation~\eqref{eqCW}:
$$\C(\om)= \sum_{0 \leq j+l \leq a_2} (-1)^{a_2-j} \binom{a_2}{j \, l} y_2^j \C(g(y_1)^l h(x)^{a_2-j-l} y_1^{a_1} x_P^v dx).$$
In order to obtain the exponent $a_2-\be_p(v)$ of $y_2$, we must pick $j=a_2-\be_P(v)$. Picking $l=0$ results in a term 
$$y_2^{a_2-\be_P(v)}\C(y_1^{a_1} h(x)^{\be_P(v)} x_P^vdx)= \sum_{m=0}^{a_1} (-1)^m \binom{a_1}{m} y_2^{a_2-\be_P(v)} y_1^{a_1-m} \C(h(x)^{\be_P(v)} f(x)^m x_P^v dx).$$
We pick $m=0$. The differential $\C(h(x)^{\be_P(v)} x_P^v dx)$ has a non-zero term
\begin{align*}
\C(x_P^{e_P\be_P(v) + v} dx) &= \C(x_P^{e_P(\delta_P(v-1) - p\al_P(v)) + v} dx) \\
&= x_P^{v-e_P\al_P(v)} dx.
\end{align*}
We do not get such a term if we pick $l>0$. This shows that there is a single non-zero contribution to the coefficient of $\ka(\C(\om))$ in $\C(\om)$, implying that the coefficient is non-zero. 
\end{proof}

\subsection{An upper bound on the \texorpdfstring{$a$}{}-number}

We now use the machinery of key terms to provide a conditional upper bound on the $a$-number $a_{Y_2}$. We do this by showing the Cartier operator is injective when restricted to a suitable subspace of $\cohom^0(Y_2, \Om_{Y_2}^1)$, whose dimension bounds the rank of the Cartier operator from below.

First we define $W := \bigcup_{P \in B_2} W_P$, such that $W$ is a basis of $\cohom^0(Y_2,\Om_{Y_2}^1)$. Let $H := \bigcup_{P \in B_2} H_P$ be the subset of $W$ consisting of differentials that have a key term. Finally, let $K$ be the set of key terms. As opposed to the case described in \cite{MR3090462}, it is now possible for two differentials to have the same key term. The following lemma describes exactly when this happens.

\begin{lemma} \label{lemsamekeyterms}
If two differentials $\om:=y_2^{a_2}y_1^{a_1}x_P^vdx \in H_P$ and $\om':=y_2^{b_2}y_1^{b_1}x_Q^wdx \in H_Q$ have the same key term, then $P=Q$. In the case $P \in B_1$, then $\om$ and $\om'$ have the same key term if and only if the following equations are satisfied:  
\begin{align} 
v+d_P(a_1+pa_2) &= w+d_P(b_1+pb_2) \label{eqsamekeyterms} \\
v-d_P\al_P(v) &= w-d_P\al_P(w). \label{eqsamekeyterms2}
\end{align}
In the case $P \in B_2 \setminus B_1$, $\om$ and $\om'$ have the same key term if and only if the following equations are satisfied:
\begin{align} 
v+e_P a_2 &= w+e_P b_2 \label{eqsamekeytermsh} \\
a_1 &= b_1 \label{eqsamekeytermsy1}\\
v-e_P\al_P(v) &= w-e_P\al_P(w). \label{eqsamekeyterms2h}
\end{align}
\end{lemma}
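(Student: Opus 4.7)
The lemma has three claims: first, that equality of key terms forces $P=Q$; second, the biconditional for $P \in B_1$; third, the biconditional for $P \in B_2 \setminus B_1$. All three will be deduced directly from the explicit formulas in Definitions~\ref{defkeyterm} and~\ref{defktpoleh}, together with the algebraic identities $\be_P(v) = \g_P(v-\epsilon_P) - p\al_P(v)$ (when $P \in B_1$) and $\be_P(v) = \delta_P(v-1) - p\al_P(v)$ (when $P \in B_2 \setminus B_1$), along with $\g_P d_P = p-1$ and $\delta_P e_P = p-1$.

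For the first claim, observe (as already noted after Definition~\ref{defkeyterm}) that $\ka(\C(\om)) \in W_P$ whenever $\om \in H_P$, which is a direct check against the defining inequalities of $W_P$ in \eqref{eqdefWinf}, \eqref{eqdefWP}, and \eqref{eqdefWPpoleh}. Since $\bigcup_{P \in B_2} W_P$ is a subset of the basis produced by Proposition~\ref{propMadden}, and the $W_P$ are pairwise disjoint (the rational functions $x_P$ at distinct branch points give linearly independent contributions), any two key terms lying in $W_P$ and $W_Q$ can coincide only if $P = Q$.

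For the case $P \in B_1$, I would compare the key terms exponent by exponent. Equating the $x_P$-exponents yields \eqref{eqsamekeyterms2} at once. The crucial observation for the $y_1,y_2$-exponents is that in both subcases of Definition~\ref{defkeyterm}, the weighted exponent $a_1' + p a_2'$ of the key term satisfies
\[
a_1' + p a_2' \;=\; a_1 + p a_2 - \be_P(v),
\]
thereby uniformizing what could otherwise be four case distinctions. Since $a_1', b_1' \in \{0, \ldots, p-1\}$, uniqueness of the base-$p$ expansion shows that $a_1'=b_1'$ and $a_2'=b_2'$ together are equivalent to $a_1+pa_2-\be_P(v) = b_1+pb_2-\be_P(w)$. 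Substituting $\be_P(v)-\be_P(w) = \g_P(v-w) - p(\al_P(v)-\al_P(w))$ and using \eqref{eqsamekeyterms2} to replace $v-w$ by $d_P(\al_P(v)-\al_P(w))$, the relation $\g_P d_P = p-1$ collapses this identity precisely to \eqref{eqsamekeyterms}, and the same manipulation read in reverse proves the converse.

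For the case $P \in B_2 \setminus B_1$, Definition~\ref{defktpoleh} has only a single case, so the analysis is more direct. Equating $y_1$-exponents gives \eqref{eqsamekeytermsy1}; equating $x_P$-exponents gives \eqref{eqsamekeyterms2h}; and equating $y_2$-exponents gives $a_2 - \be_P(v) = b_2 - \be_P(w)$. The last equation, combined with \eqref{eqsamekeyterms2h} and the identities $\be_P(v) - \be_P(w) = \delta_P(v-w) - p(\al_P(v)-\al_P(w))$ and $\delta_P e_P = p-1$, reduces by the same manipulation as above to \eqref{eqsamekeytermsh}. The main obstacle is the case analysis for $P \in B_1$: the two subcases in Definition~\ref{defkeyterm} naively produce four combinations for $(\om,\om')$, and the whole argument hinges on the uniform expression for $a_1' + p a_2'$ (combined with base-$p$ uniqueness) that makes those four cases collapse into a single clean identity.
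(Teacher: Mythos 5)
Your proof is correct, and it follows the same overall strategy as the paper: show $P=Q$ first, then equate the three exponents of the key terms and translate the resulting relations into \eqref{eqsamekeyterms}--\eqref{eqsamekeyterms2} (resp.\ \eqref{eqsamekeytermsh}--\eqref{eqsamekeyterms2h}) using $\be_P(v)=\g_P(v-\epsilon_P)-p\al_P(v)$ and $\g_P d_P=p-1$ (resp.\ $\delta_P e_P=p-1$). The genuine difference is organizational: your observation that in both cases of Definition~\ref{defkeyterm} the key term has weighted exponent $a_1'+pa_2'=a_1+pa_2-\be_P(v)$ with $0\le a_1'<p$, so that uniqueness of base-$p$ expansions applies, collapses the paper's case analysis on $b_2-a_2\in\{-1,0,+1\}$ (and the separate treatment of $a_1\ge\be_P(v)$ versus $a_1<\be_P(v)$) into a single identity that handles both directions of the biconditional at once; the paper's longer route has the minor advantage of making intermediate relations such as $\al_P(v)-\al_P(w)=b_1-a_1$ explicit. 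Two small points to tighten: your $P=Q$ argument rests on $\ka(\C(\om))\in W_P$ plus pairwise disjointness of the sets $W_P$, and for $P\neq\infty$ the substance of that membership is precisely the positivity $v-d_P\al_P(v)>0$, which the paper verifies by the one-line estimate $v-d_P\lfloor \g_P(v-1)/p\rfloor\ge v-(p-1)(v-1)/p>0$; you should either cite the remark after Definition~\ref{defkeyterm} or reproduce that estimate, since the disjointness argument says nothing without it. Likewise, in the case $a_1<\be_P(v)$ the base-$p$ argument needs $a_2'\ge 0$, i.e.\ $a_2\ge 1$, which follows from $\be_P(v)\le a_1+pa_2$ for $\om\in H_P$ and is worth stating.
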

\begin{proof}
Assume $\ka(\C(\om))=\ka(\C(\om'))$. First we show that $P=Q$. Note that, if $P\neq Q$ then the key terms of $\om$ and $\om'$ must not involve $x_P$ and $x_Q$, such that  
$$ v-d_P\al_P(v) = w-d_Q\al_Q(v)=0.$$
However, for $P \neq \infty$ we have
$$ v-d_P\al_P(v) = v-d_P \left\lfloor \frac{\g_P(v-1)}{p} \right\rfloor \geq v-\frac{d_P\g_P (v-1)}{p} = v- \frac{(p-1)(v-1)}{p}>0.$$
This implies that $P=Q=\infty$ and hence $P$ and $Q$ are not distinct.

Assume $P \in B_1$. We show that Equations~\eqref{eqsamekeyterms} and \eqref{eqsamekeyterms2} hold. Equating the exponents of $x_P$ in $\ka(\C(\om))$ and $\ka(\C(\om'))$ immediately gives Equation~\eqref{eqsamekeyterms2}, so the rest of the proof revolves around showing Equation~\eqref{eqsamekeyterms}.
By equating the exponent of $y_2$ in the key terms, we obtain $b_2 \in \{a_2-1,a_2,a_2+1\}$. Without loss of generality, we assume $b_2 \geq a_2$, such that only the cases $b_2=a_2$ and $b_2=a_2+1$ remain. 

In the case $b_2=a_2$, equating the exponent of $x_P$ in the key terms gives
\begin{equation} \label{eqsamekeytermsvw}
    v= w+d_P(\al_P(v)-\al_P(w)).
\end{equation}
Substituting this into the equation for the exponents of $y_1$ yields
\begin{align*}
    a_1-\be_P(v) &= b_1 - \be_P(w) \\
    a_1-\g_P(v-\epsilon_P)+p\al_P(v)&=b_1-\g_P(w-\epsilon_P)+p\al_P(w) \\
    a_1-\g_P(w+d_P(\al_P(v)-\al_P(w)-\epsilon_P)+p\al_P(v) &= b_1-\g_P(w-\epsilon_P)+p\al_P(w) \\
    a_1-(p-1)(\al_P(v)-\al_P(w))+p\al_P(v) &= b_1+p\al_P(w) \\
    \al_P(v)-\al_P(w)&=b_1-a_1.
\end{align*}
Substituting this back into Equation~\eqref{eqsamekeytermsvw} then gives $v=w+d_P(b_1-a_1)$ and hence $v+d_Pa_1=w+d_Pb_1$, as desired.

On the other hand, suppose $b_2=a_2+1$. In that case, Equation~\eqref{eqsamekeytermsvw} still follows from equating the exponent of $x_P$. Equating the exponent of $y_1$ yields $a_1-\be_P(v) = b_1-\be_P(w).$ Following the same steps as in the case $b_2=a_2$ gives $\al_P(v)-\al_P(w)=b_1-a_1+p$ and hence
$v=w+d_P(b_1-a_1+p)$. Rearranging this and using $b_2-a_2=1$ gives the desired Equation~\eqref{eqsamekeyterms}.

For the opposite direction, assume Equations \eqref{eqsamekeyterms} and \eqref{eqsamekeyterms2} hold. By Equation~\eqref{eqsamekeyterms2}, $\ka(\C(\om))$ and $\ka(\C(\om'))$ have the same exponent of $x_P$. Substituting $v=w+d_P(\al_P(v)-\al_P(w))$ in Equation~\eqref{eqsamekeyterms} yields
$$\al_P(v)-\al_P(w)=b_1-a_1+p(b_2-a_2).$$ 
This implies
\begin{align*}
    b_1-\be_P(w)+p(b_2-a_2)&=a_1+\al_P(v)-\al_P(w)-\be_P(w) \\
    &=a_1+\al_P(v)-\al_P(w)-\g_P(w-\epsilon_P)+p\al_P(w) \\
    &=a_1+\al_P(v)-\al_P(w) \\ 
     & \hspace{5mm} -\g_P(v+d_P(\al_P(w)-\al_P(v))-\epsilon_P)+p\al_P(w) \\
    &=a_1+\al_P(v)-\al_P(w)-\g_P(v-\epsilon_P) \\
    & \hspace{5mm} -(p-1)(\al_P(w)-\al_P(v))+p\al_P(w) \\
    &=a_1-\g_P(v-\epsilon_P)+p\al_P(v) \\
    &=a_1-\be_P(v).
\end{align*}
Using this, one verifies that in all cases for $b_2$, the exponents of $y_1$ and $y_2$ in $\ka(\C(\om))$ and $\ka(\C(\om'))$ agree. Thus $\om$ and $\om'$ have the same key term.

We now consider the case $P \in B_2 \setminus B_1$. Equation~\eqref{eqsamekeyterms2h} follows immediately from equating the exponent of $x_P$ in the key term. Similarly, Equation~\eqref{eqsamekeytermsy1} follows from equating the exponents of $y_1$. For Equation~\eqref{eqsamekeytermsh}, we write $v=w+e_Pr$ and $\al_P(v)=\al_P(w)-r$ for some integer $r$. By equating the exponents of $y_2$ in the key term, we obtain
\begin{align*}
    a_2-b_2 &= \be_P(v)-\be_P(w) \\
    &= \delta_P(v-1)-p\al_P(v) - (\delta(w-1) - p \al_P(w)) \\
    &= \delta_P(v-w) + p (\al_P(v)-\al_P(w)) \\
    &= \delta_P e_P r + pr = -r.
\end{align*}
Thus Equation~\eqref{eqsamekeytermsh} also holds.

For the opposite direction, assume Equations \eqref{eqsamekeytermsh}, \eqref{eqsamekeytermsy1} and \eqref{eqsamekeyterms2h} hold. Then we only need to show that $y_2$ has the same exponent in both key terms. This is achieved by writing $a_2=b_2-r$ and $v=w+e_Pr$. Then a similar calculation as above shows $$a_2-\beta_P(v) = b_2- \be_P(w),$$ as desired.
\end{proof}

To display a minor of the Cartier-Manin matrix in a convenient form, we define a partial order on $W$. First, let $<$ be any linear order on $B_2$ such that $\infty<P$ for each $P\neq \infty$.

\begin{definition} \label{defprec}
Let $\om:=y_2^{a_2} y_1^{a_1} x_P^v dx$ and $\om':=y_2^{b_2} y_1^{b_1} x_Q^w dx$. We say $\om' \prec \om$ in any of the following cases:
\begin{enumerate}[label=(\roman*)]
    \item $Q\neq P$ and $b_1+pb_2 < a_1+pa_2$. \label{defpreccasePneqQ1}
    \item $Q<P$ and $b_1+pb_2=a_1+pa_2$. \label{defpreccasePneqQ2}
    \item $Q=P \in B_1$ and $w + d_P(b_1+pb_2) < v+ d_P(a_1+pa_2)$. \label{defpreccaseP=Q1}
    \item $Q=P \in B_1$ and $w + d_P(b_1+pb_2) = v+ d_P(a_1+pa_2)$ and $w-d_P\al_P(w) > v-d_P\al_P(v)$. \label{defpreccaseP=Q2}
    \item $Q=P \in B_2 \setminus B_1$ and $w + e_P a_2 < v+ e_Pa_2$. \label{defpreccaseP=Qh1}
    \item $Q=P \in B_2 \setminus B_1$ and $w + e_P b_2 = v+e_P a_2$ and $b_1 < a_1$. \label{defpreccaseP=Qh2}
    \item $Q=P \in B_2 \setminus B_1$ and $w + e_P b_2 = c+e_P a_2$ and $b_1 = a_1$ and $w-e_P\al_P(w) > v-d_P\al_P(v)$. \label{defpreccaseP=Qh3}
\end{enumerate}
\end{definition}

\begin{remark} \label{rmkincomparableiffsamekeyterm}
By comparing Definition~\ref{defprec} to Lemma~\ref{lemsamekeyterms}, note that two differentials $\om$ and $\om'$ are incomparable by the order $\prec$ if and only if they have the same key term.
\end{remark}

The goal of introducing the partial order $\prec$ is to prove the following lemma.

\begin{lemma} \label{lemktzero}
Let $\om:=y_2^{a_2} y_1^{a_1} x_P^v dx \in H$ and $\om':=y_2^{b_2} y_1^{b_1} x_Q^w dx \in W$ with $\om' \prec \om$. Then the coefficient of the key term $\ka(\C(\om))$ is zero in $\C(\om')$.
\end{lemma}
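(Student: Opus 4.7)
The plan is a case analysis through the seven cases of Definition~\ref{defprec}. In each case I would expand $\C(\om')$ via Lemma~\ref{lemC(om)} and argue that the monomial $\ka(\C(\om))$ does not appear. Since $g(y_1)$ and $h(x)$ carry no $y_2$, the $y_2$-exponent of any term in \eqref{eqCW} equals the summation index $j$; matching with the $y_2$-exponent $e_2$ of $\ka(\C(\om))$ then forces $j = e_2$ and hence $b_2 \geq e_2$. What remains is to extract the factor $y_1^{e_1} x_P^{e_0}\,dx$ from $\C(g(y_1)^l h^{b_2-j-l} y_1^{b_1} x_Q^w\,dx)$ for the relevant $l$; I would do this by writing $g(y_1)^l y_1^{b_1}$ as a polynomial in $y_1$ and substituting $y_1 = y_1^p - f$ to display $p$-th powers of $y_1$, after which $\C$ reduces to the Cartier operator on $k(x)\,dx$ governed by $\C(x_P^{pn+\epsilon_P}\,dx) = x_P^{n+\epsilon_P}\,dx$.

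For the cases $P \in B_1$, the argument rests on the identity
\[
e_0 + d_P(e_1 + p e_2) = v + d_P(a_1 + p a_2) - d_P\bigl(\al_P(v) + \be_P(v)\bigr),
\]
which one verifies directly in both sub-cases of Definition~\ref{defkeyterm}, together with the weight bound that any monomial $y_2^{e_2} y_1^{e_1'} x_P^{e_0'}\,dx$ appearing in $\C(\om')$ satisfies $e_0' + d_P(e_1' + p e_2) \leq w + d_P(b_1 + p b_2)$. This bound is intrinsic: each substitution $y_1 = y_1^p - f$ at a point where $f$ has pole order $d_P$ trades one unit of $y_1$-degree for exactly $d_P$ units of $x_P$-pole order, preserving the combined weight. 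Cases (i), (ii) and (iii) then follow from the strict (respectively equality-with-tie-break) inequalities; the analogous identity for $P \in B_2 \setminus B_1$, with $e_P$ in place of $d_P$ and involving only the $y_2$-exponent, handles cases (v) and (vi), using additionally that $f$ is regular at $P$ so that the $y_1$-exponent passes through $\C$ unchanged.

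Cases (iv) and (vii) are the main obstacle, since the inequality in $\prec$ is borderline: the total weights agree and only the $x_P$-refinement differs. For these I would rely on the divisibility identity
\[
v - d_P\al_P(v) = \frac{v + d_P\be_P(v) + (p-1)\epsilon_P}{p}
\]
(and its obvious $e_P$-analogue), which encodes that the $x_P$-exponent of $\ka(\C(\om))$ is obtained by a specific floor operation; producing $e_0 = v - d_P\al_P(v)$ through $\C$ requires an $x_P$-exponent of exactly $p e_0 - (p-1)\epsilon_P = v + d_P\be_P(v)$ in the integrand. Coupled with the $y_1$-exponent condition, this pins down the multi-index in the expansion of $f^M \cdot (\text{stuff}) \cdot x_Q^w$ in a way that forces $\al_P(w) \leq \al_P(v)$, contradicting the assumption $w - d_P\al_P(w) > v - d_P\al_P(v)$. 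The bulk of the proof consists in verifying these multi-index constraints case by case, which is routine but arithmetically lengthy.
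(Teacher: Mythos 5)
Your skeleton is the same as the paper's: run through the seven cases of Definition~\ref{defprec}, expand $\C(\om')$ by Lemma~\ref{lemC(om)}, match the $y_2$-exponent through the index $j$, substitute $y_1=y_1^p-f$ to reduce to the Cartier operator on $k(x)dx$, and compare pole orders at $P$. But two steps do not work as written. First, the weight bound you invoke for cases (i)--(iii) is true but too weak to give them. By your own identity the key term has weight $e_0+d_P(e_1+pe_2)=v+d_P(a_1+pa_2)-d_P\bigl(\al_P(v)+\be_P(v)\bigr)$, while case (iii) only assumes $w+d_P(b_1+pb_2)<v+d_P(a_1+pa_2)$; since $\al_P(v)$ can be large (take $v$ near the maximum allowed by \eqref{eqv<pd1} and the two weights differing by $1$), the key term's weight can easily be $\le w+d_P(b_1+pb_2)$, so no contradiction follows. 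The missing ingredient is exactly the division by $p$ that $\C$ performs on pole orders, i.e.\ your own observation that producing $x_P^{e_0}$ requires integrand exponent $pe_0-(p-1)\epsilon_P$: once $j$, $l$ and the number $m$ of $f$-factors are pinned by the $y_2$- and $y_1$-exponents (with the paper's decomposition one gets $m=pl+b_1-e_1$), the available pole order is at most $w+d_Pm+(pd_P-1)(b_2-j-l)$ --- note this uses $\ord_P(h)>-pd_P$, i.e.\ minimality, which your sketch never invokes but the paper needs --- and this yields $pe_0+d_P(e_1+pe_2)\le w+d_P(b_1+pb_2)+(p-1)\epsilon_P$, which evaluated at the key term gives $v+d_P(a_1+pa_2)\le w+d_P(b_1+pb_2)$, the desired contradiction. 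Cases (i)--(ii) also need a separate accounting, since for $Q\neq P$ the factor $x_Q^w$ contributes no pole at $P$ (the contradiction there comes from $v>0$ for $P\neq\infty$, respectively $v\ge 0$ at $\infty$), whereas your single inequality with $w$ on the right conflates the two points.

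Second, cases (iv) and (vii) are the heart of the lemma, and you both defer them ("routine but arithmetically lengthy") and mis-state the endgame: deriving $\al_P(w)\le\al_P(v)$ does not contradict $w-d_P\al_P(w)>v-d_P\al_P(v)$ (for example $w>v$ with $\al_P(w)=\al_P(v)$ satisfies both). The actual obstruction, as in the paper's Lemma~\ref{lemktzeroP=Q2} and Lemma~\ref{lemktzeroP=Qh3}, is different: the weight equality forces $w=v+ed_P$ for an integer $e$, the hypothesis $w-d_P\al_P(w)>v-d_P\al_P(v)$ then forces $\al_P(w)=\al_P(v)+e-1$, hence $\be_P(w)=\be_P(v)+p-e$ and in particular $e>\be_P(v)$; matching the $y_1$-exponent (resp.\ the $y_2$-exponent in case (vii)) then requires exactly $m=\be_P(v)-e<0$ powers of $f$ (resp.\ a $y_2$-exponent exceeding $b_2$), which is impossible. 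That short computation is precisely what your proposal leaves unproven, so as it stands the borderline cases, and with them the lemma, are not established.
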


In order to prove Lemma~\ref{lemktzero}, we prove seven lemmas, corresponding to the cases of Definition~\ref{defprec}. In each case, we have
\begin{equation} \label{eqC(om')ktzero}
\C(\om')= \sum_{0 \leq j+l \leq b_2} (-1)^{b_2-j} \binom{b_2}{j \, l} y_2^j \C(g(y_1)^l h(x)^{b_2-j-l} y_1^{b_1} x_Q^w dx).
\end{equation}

Recall also the formula for the key term $\ka(\C(\om)):$
\begin{equation} \label{eqkeytermprec}
\ka(\C(\om)):= \begin{cases} 
y_2^{a_2}y_1^{a_1-\be_P(v)}x_P^{v-d_P\al_P(v)} dx  &\hbox{\textup{if } $P \in B_1$ \textup{ and } $a_1 \geq \be_P(v)$} \\
y_2^{a_2-1}y_1^{a_1-\be_P(v)+p}x_P^{v-d_P\al_P(v)} dx &\hbox{\textup{if } $P \in B_1$ \textup{ and } $a_1 < \be_P(v)$} \\
y_2^{a_2-\be_P(v)}y_1^{a_1}x_P^{v-e_P \al_P(v)} dx &\hbox{\textup{if } $P \in B_2 \setminus B_1$.}
\end{cases}
\end{equation}

The general strategy for proving Lemma~\ref{lemktzeroPneqQ1} to Lemma~\ref{lemktzeroP=Qh3} is as follows. We assume there is a term of $\mathcal{C}(\om')$ that contributes to $\ka(\C(\om))$. Then we specialize to the terms of $\mathcal{C}(\om')$ that have the required exponent of $y_2$ using Equation~\eqref{eqkeytermprec}. We then specialize to the terms that have the required exponent of $y_1$. This way we are left with a differential on $\mathbb{P}^1$ and we show that its pole order is too small to give the required exponent of $x_P$. The details of this process differ per case of Definition~\ref{defprec}.

\begin{lemma}[Case \ref{defpreccasePneqQ1} of Definition~\ref{defprec}] \label{lemktzeroPneqQ1}
Let $\om:=y_2^{a_2} y_1^{a_1} x_P^v dx \in H$ and $\om':=y_2^{b_2} y_1^{b_1} x_Q^w dx \in W$ with $Q \neq P$ and $b_1+pb_2 < a_1 + pa_2$. Then the coefficient of $\ka(\C(\om))$ is zero in $\C(\om')$.
\end{lemma}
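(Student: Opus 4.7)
The plan is to execute the general strategy sketched right after \eqref{eqkeytermprec}: assume for contradiction that some term of $\C(\om')$ contributes to $\ka(\C(\om))$, successively match the $y_2$- and $y_1$-exponents of the key term against the expansion \eqref{eqC(om')ktzero} of $\C(\om')$, and then derive a contradiction from a pole-order estimate at $P$, using that $x_Q^w$ is regular at $P$ whenever $Q \neq P$.

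First, match $y_2$-exponents. By \eqref{eqkeytermprec}, the $y_2$-exponent $J$ of $\ka(\C(\om))$ equals one of $a_2$, $a_2 - 1$, or $a_2 - \be_P(v)$. Matching forces the outer index $j$ in \eqref{eqC(om')ktzero} to equal $J$, so $b_2 \geq J$; the hypothesis $b_1 + pb_2 < a_1 + pa_2$ combined with $0 \leq a_1, b_1 < p$ forces $b_2 \leq a_2$, pinning down the admissible triples $(b_2, j, l)$. Next, for each admissible triple, expand $g(y_1)^l y_1^{b_1}$ via \eqref{eqdefg}, apply the substitution $y_1 = y_1^p - f$, and use the $p^{-1}$-linearity of $\C$, in parallel with the computation in the proof of Lemma~\ref{lemktcom}. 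Matching the $y_1$-exponent of the key term singles out a specific power $r$ of $f$ that must appear, reducing the problem to whether $\C(f^r h^{b_2-j-l} x_Q^w dx)$ contains $x_P^{v-d_P\al_P(v)}dx$ (or $x_P^{v-e_P\al_P(v)}dx$ when $P \in B_2 \setminus B_1$).

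Since $Q \neq P$, the function $x_Q^w$ is regular at $P$; when $P = \infty$, it moreover has a zero there of order $w$. The minimality condition (Definition~\ref{defnminimalcyclic}) bounds the pole of $f^r h^{b_2-j-l}$ at $P$ by $rd_P + (b_2-j-l)(pd_P - 1)$ when $P \in B_1$, and by $(b_2-j-l) e_P$ when $P \in B_2 \setminus B_1$. Because $\C$ on $k(x)dx$ sends $t^{-k}dt$ to $t^{-(k-1)/p - 1}dt$ when $k \equiv 1 \pmod p$ and annihilates it otherwise (where $t$ is a local uniformizer at $P$), the requirement that a specific Laurent coefficient at $P$ be nonzero translates into a pole-order inequality. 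Substituting the identity $p(v - d_P\al_P(v)) = v + (p-1)\epsilon_P + d_P\be_P(v)$ derived in the proof of Lemma~\ref{lemktcom} (respectively $p(v - e_P\al_P(v)) = v + p - 1 + e_P\be_P(v)$ from the proof of Lemma~\ref{lemcompoleh}) and the explicit value of $r$, the inequality simplifies to $(b_1 - a_1)d_P \geq v$ for finite $P \in B_1$, to $(b_1 - a_1)d_\infty \geq v + w$ for $P = \infty$, or to an analogous inequality when $P \in B_2 \setminus B_1$. Since the hypothesis forces $b_1 < a_1$ whenever $b_2 = a_2$, and $v, w$ are positive at branch points, each inequality fails, giving the required contradiction.

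The main obstacle is the case-by-case bookkeeping across the three forms of $\ka(\C(\om))$ in \eqref{eqkeytermprec} and the dichotomy $P \in B_1$ versus $P \in B_2 \setminus B_1$. In case 1b with $b_2 = a_2$, both $l = 0$ and $l = 1$ give candidate contributions and must be checked separately, the latter using the bound $\ord_P(h) > -pd_P$ on the pole of $h$ at $P$ in addition to that of $f$. The case $P = \infty$ also requires extra care, since the Laurent expansion of $f^r h^{b_2-j-l} x_Q^w dx$ at $\infty$ must incorporate the zero of $x_Q^w$ at infinity of order $w$, which is precisely what produces the extra $+w$ in the final inequality $(b_1 - a_1)d_\infty \geq v + w$.
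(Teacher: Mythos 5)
Your proposal follows essentially the same route as the paper's proof: pin down the $y_2$-exponent (so $b_2\le a_2$ and $j$ is forced), match the $y_1$-exponent to single out a power $f^r$, and then kill the required $x_P$-exponent by comparing the pole order of $f^r h^{b_2-j-l}x_Q^w\,dx$ at $P$ (at most $rd_P$, resp.\ $(b_2-j-l)e_P$, since $x_Q^w$ is regular at $P$) with the needed order $v+d_P\be_P(v)$, resp.\ $v+e_P\be_P(v)$, which is exactly the paper's argument, with the $P=\infty$ bookkeeping made explicit. Two harmless slips: when $j=a_2-1$ and $b_2=a_2$ it is the $l=0$ term (not $l=1$) that carries $h$, and it vanishes because no admissible power of $f$ exists (so the bound $\ord_P(h)>-pd_P$ is not actually needed in this lemma); and in the sub-case $b_2=a_2-1$ the final inequality is $(b_1-a_1-p)d_P\ge v$ (plus $w$ at infinity), which fails unconditionally because $b_1-a_1-p<0$, rather than because $b_1<a_1$.
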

\begin{proof}
Since $0 \leq a_1,b_1 < p$, the assumption implies $b_2 \leq a_2$. First assume that $P \in B_1$. Then it is clear from Equation~\eqref{eqC(om')ktzero} and Equation~\eqref{eqkeytermprec} that we are done in the case $b_2<a_2-1$. In the case $b_2=a_2-1$, we must have $a_1 < \be_P(v)$. In Equation~\eqref{eqC(om')ktzero}, we need to specialize to $j=a_2-1$ and $l=0$ to get the right exponent of $y_2$. This leaves us with a term
$$ y_2^{a_2-1} \C(y_1^{b_1} x_Q^w dx) = \sum_{m=1}^{b_1} y_2^{a_2-1} y_1^{b_1-m} \C(f(x)^m x_Q^w dx).$$
To get the right exponent of $y_1$, we must pick $m=b_1-a_1+\be_P(v)-p$. Then the differential $f(x)^{b_1-a_1+\be_P(v)-p}x_Q^w dx$ has a pole of order at most $d_P(b_1-a_1+\be_P(v)-p)$ at $P$. However, a pole of order $d_P\be_P(v)+v$ is needed to produce the exponent $v-d_P\al_P(v)$ of $x_P$. Since $b_1-a_1-p <0$, it follows that this exponent is not attained and hence $\ka(\C(\om))$ is zero in $\C(\om')$.

For $b_2=a_2$ the proof is similar. In the case $a_1 \geq \be_P(v)$, we must pick $j=a_2$ and $l=0$ in Equation~\eqref{eqC(om')ktzero} in order to get the right exponent of $y_2$. This yields the term
$$ y_2^{a_2} \C(y_1^{b_1} x_Q^w dx) = \sum_{m=0}^{b_1} (-1)^m \binom{b_1}{m} y_2^{a_2} y_1^{b_1-m} \C(f(x)^m x_Q^w dx).$$
In order to achieve the right exponent of $y_1$, we must specialize $m=b_1-a_1+\be_P(v)$. Since $\om'\prec \om$, we have $b_1 < a_1$ and it follows that the right exponent of $x_P$ is not achieved. In the case $a_1 < \be_P(v)$, we must specialize to $j=a_2-1$ in Equation~\eqref{eqC(om')ktzero} to obtain the right exponent of $y_2$. The possibilities $l=0$ and $l=1$ result in the terms $y_2^{a_2-1} \C(g(y_1) y_1^{b_1} x_Q^w dx)$ and $y_2^{a_2-1} \C(y_1^{b_1} h(x) x_Q^{w} dx).$
The second term could never attain the right exponent of $y_1$, since $b_1<a_1$. On the other hand, for the first term we substitute the definition of $g(y_1)$ to obtain
\begin{align*}
    \C(g(y_1) y_1^{b_1}x_Q^w dx) &= \sum_{i=1}^{p-1} (-1)^{i} \frac{(p-1)!}{i!(p-i)!} y_1^{p-i} \C(y_1^{b_1+i} x_Q^w dx) \\
    &= \sum_{i=1}^{p-1} \frac{(p-1)!}{i!(p-i)!} \sum_{m=0}^{b_1+i} (-1)^{m+i+1} \binom{b_1+i}{m} y_1^{p-m+b_1} \C(f(x)^m x_Q^w dx).
\end{align*}
For any $i$, we must pick $m=b_1-a_1+\be_P(v)$ to obtain the right exponent of $y_1$. Then the differential $f(x)^{b_1-a_1+\be_P(v)} x_Q^w dx$ has a pole of order at most $d_P(b_1-a_1+\be_P(v))$ at $P$. Since $b_1 < a_1$, this is smaller than the required pole order $d_P\be_P(v)+v$ to obtain the right exponent of $x_P$. Hence the coefficient of $\ka(\C(\om))$ is again zero in $\C(\om')$.

Now, assume that $P \in B_2 \setminus B_1$. In Equation~\eqref{eqC(om')ktzero}, we must pick $j=a_2-\be_P(v)$ to get the right exponent of $y_2$. In the case $b_1 < a_1$, the required exponent of $y_1$ can never be obtained. On the other hand, in the case $b_2 < a_2$, the required exponent $\be_P(v)$ of $h(x)$ cannot be obtained. In both cases, the coefficient of $y_2^{a_2-\be_P(v)}y_1^{a_1}x_P^{v-e_P\al_P(v)}dx$ in $\C(\om')$ is zero, which finishes the proof.
\end{proof}

\begin{lemma}[Case \ref{defpreccasePneqQ2} of Definition \ref{defprec}] \label{lemktzeroPneqQ2}
Let $\om:=y_2^{a_2} y_1^{a_1} x_P^v dx \in H$ and $\om':=y_2^{b_2} y_1^{b_1} x_Q^w dx \in W$ with $Q<P$ and $b_1+pb_2=a_1+pa_2$. Then the coefficient of $\ka(\C(\om))$ is zero in $\C(\om')$.
\end{lemma}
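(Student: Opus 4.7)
The starting observation is that $b_1 + p b_2 = a_1 + p a_2$ together with $0 \leq a_i, b_i < p$ forces $a_1 = b_1$ and $a_2 = b_2$ by the uniqueness of base-$p$ expansion. Furthermore, since $\infty$ is the minimum of $B_2$ under $<$ and $Q < P$, the point $P$ cannot be $\infty$; hence $P$ is finite and $\epsilon_P = 1$. These reductions align the shapes of $\om$ and $\om'$, and reduce the problem to a pole-order estimate at $P$.

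The plan is then to split into the three forms of $\ka(\C(\om))$ listed in Equation~\eqref{eqkeytermprec} and, in each, analyze Equation~\eqref{eqC(om')ktzero} term-by-term, following the template of Lemma~\ref{lemktcom}. Matching the exponent of $y_2$ in the key term forces a specific value of $j$, and matching the $y_1$-exponent via the expansion $y_1^{b_1} = (y_1^p - f(x))^{b_1}$ (together with, when $l \geq 1$, the reductions $\C(y_1^{pN}\cdot) = y_1^N \C(\cdot)$ and $y_1 = y_1^p - f(x)$ applied inside $g(y_1)^l$) pins down the possible powers of $f$ that can appear. In every surviving branch, the contribution to $\ka(\C(\om))$ reduces to the coefficient of $x_P^{v - d_P\al_P(v)} dx$ (respectively $x_P^{v - e_P\al_P(v)} dx$ when $P \in B_2 \setminus B_1$) in a differential of the form $\C(f^{\be_P(v)} x_Q^w dx)$, or more generally $\C(h^{\be_P(v) - l} f^k x_Q^w dx)$ for suitable non-negative integers $k, l$.

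The decisive step is a pole-order estimate at $P$: producing $x_P^M dx$ with $M > 0$ via $\C$ at a finite point $P$ requires the input to carry a pole of order at least $pM - p + 1$ at $P$. Setting $M = v - d_P \al_P(v)$ gives required order $pv - p d_P \al_P(v) - p + 1$, while from $\be_P(v) = \g_P(v-1) - p\al_P(v)$ and $\epsilon_P = 1$ the actual pole of the relevant input at $P$ is at most $d_P\be_P(v) = (p-1)(v-1) - p d_P\al_P(v)$. The difference equals $v$, which is strictly positive, so the required order strictly exceeds the actual, and the coefficient vanishes. The parallel estimate with $e_P$ and $\delta_P$ in place of $d_P$ and $\g_P$ handles the case $P \in B_2 \setminus B_1$.

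The step I expect to be most delicate is the bookkeeping in the case $P \in B_2 \setminus B_1$ with $l \geq 1$: one must track the $y_1$-exponents under the multinomial expansion of $g(y_1)^l$ and successive applications of the $y_1$-reductions, and check that the resulting input to $\C$ still has pole at $P$ bounded by $e_P\be_P(v)$. The saving grace is that $f$ is regular at $P$, since $P \notin B_1$, so every factor $f^k$ produced is regular at $P$ and cannot inflate the pole; the gap-$v$ estimate therefore applies uniformly across all intermediate terms.
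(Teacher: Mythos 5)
Your proposal is correct and follows essentially the same route as the paper's proof: reduce via the base-$p$ expansion to $b_1=a_1$, $b_2=a_2$, note that $Q<P$ forces $P\neq\infty$ (hence $v>0$, since $\om \in W_P$ with $P$ finite requires $v>0$), and then compare the pole order at $P$ needed to produce $x_P^{v-d_P\al_P(v)}dx$ under $\C$ (namely $d_P\be_P(v)+v$, which is your $pM-p+1$) with the available pole order $d_P\be_P(v)$ (resp.\ $e_P\be_P(v)$), the deficit being exactly $v$. One small point of care: when $P\in B_1$ and $a_1<\be_P(v)$, the branch of Equation~\eqref{eqC(om')ktzero} carrying a factor of $h(x)$ must be discarded because no admissible power of $f$ attains the required $y_1$-exponent (it would need $m=\be_P(v)-p<0$), not by the pole bound, since $h$ may have pole order up to $pd_P-1$ at $P$; your ``surviving branch'' phrasing implicitly covers this, and the paper handles it the same way.
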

\begin{proof}
The proof is similar to the proof of Lemma~\ref{lemktzeroPneqQ1}. Note that we have $b_2=a_2$ and $b_1=a_1$. First suppose $P \in B_1$. After attaining the right exponents of $y_2$ and $y_1$, we are left with a differential on $\PP^1$ that has a pole of order at most $d_P\be_P(v)$ at $P$, whereas a pole of order $d_P\be_P(v)+v$ is needed. Then, it follows from the assumption $Q<P$ that $P \neq \infty$ (since $\infty$ is the smallest branch point in our ordering) and therefore $v>0$. Thus the required pole order is not attained and therefore the coefficient of $\ka(\C(\om))$ is zero in $\C(\om')$.

Similarly, suppose $P \in B_2 \setminus B_1$. In Equation~\eqref{eqC(om')ktzero}, we must choose $j=a_2-\be_P(v)$, such that the resulting differential of $\PP^1$ has a pole of order at most $e_P\be_P(v)$, whereas a pole of order $e_P \be_P(v)+v$ is needed. Again, noting $v>0$ finishes the proof.
\end{proof}

\begin{lemma}[Case \ref{defpreccaseP=Q1} of Definition \ref{defprec}] \label{lemktzeroP=Q1}
Let $\om:=y_2^{a_2} y_1^{a_1} x_P^v dx \in H$ and $\om':=y_2^{b_2} y_1^{b_1} x_P^w dx \in W$ with $P \in B_1$ and \begin{equation} \label{eqprecP=Q1}
w+d_P(b_1+pb_2) < v+d_P(a_1+pa_2). 
\end{equation}
Then the coefficient of $\ka(\C(\om))$ is zero in $\C(\om')$.
\end{lemma}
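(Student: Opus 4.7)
The plan is to follow the strategy laid out just before Lemma~\ref{lemktzeroPneqQ1} and executed in the proofs of Lemmas~\ref{lemktzeroPneqQ1} and \ref{lemktzeroPneqQ2}. The first step is to expand $\C(\om')$ via Equation~\eqref{eqC(om')ktzero} and split into the two subcases determined by Equation~\eqref{eqkeytermprec}, namely $a_1 \geq \be_P(v)$ and $a_1 < \be_P(v)$. In either subcase, matching the $y_2$-exponent of $\ka(\C(\om))$ pins down $j$ in Equation~\eqref{eqC(om')ktzero} to either $a_2$ or $a_2 - 1$, so at minimum $b_2 \geq a_2 - 1$.

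Next, for each admissible $l \in \{0, 1, \ldots, b_2 - j\}$, I would expand $g(y_1)^l y_1^{b_1}$ as a polynomial in $y_1$ of degree strictly less than $p$ by iteratively applying the Artin--Schreier relation $y_1^p = y_1 + f$, exactly as in the proof of Lemma~\ref{lemktcom}. A direct computation using $\binom{p-1}{i} \equiv (-1)^i \pmod p$ and a geometric-sum manipulation yields the compact identity $g(y_1) \equiv (f^{p-1} - 1) y_1 - f$ modulo $y_1^p - y_1 - f$. Matching the required $y_1$-exponent of $\ka(\C(\om))$ in the reduced form of $g(y_1)^l y_1^{b_1}$ then singles out a specific power $f^m$ of $f$ that must be extracted, leaving a differential $c \cdot f^m h^{b_2 - j - l} x_P^w dx$ on $\PP^1$ whose $x_P$-pole at $P$ must reach $d_P \be_P(v) + v$ for a non-vanishing contribution to $\ka(\C(\om))$.

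The decisive step, and the main obstacle, is to check that this pole order is strictly smaller than $d_P \be_P(v) + v$. Using $\ord_P(f) = -d_P$, the reducedness bound $\ord_P(h) > -p d_P$, and the $(p-1)$-bound on the $f$-degree contributed by each factor of $g(y_1)$ after reduction, one obtains an explicit upper bound on the $x_P$-pole of the residual differential, linear in $m, l, b_2 - j, w$. Substituting the value of $m$ forced by the $y_1$-exponent condition and invoking the hypothesis $w + d_P(b_1 + p b_2) < v + d_P(a_1 + p a_2)$ should then deliver the strict inequality. The principal difficulty lies in the range $b_2 > a_2$: there both $g(y_1)^l$ and $h^{b_2 - j - l}$ contribute non-trivial poles at $P$, and the hypothesis no longer forces $b_2 \leq a_2$ as it did in Lemma~\ref{lemktzeroPneqQ1}; the balance between the contributions from $g(y_1)^l$, $h^{b_2 - j - l}$, and the extracted $f^m$ must be handled carefully, making use of the compact form of $g(y_1)$ above and the resulting partial cancellations, before the hypothesis can be deployed to close the argument.
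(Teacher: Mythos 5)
Your overall strategy is the paper's: pin down $j$ in Equation~\eqref{eqC(om')ktzero} from the $y_2$-exponent of $\ka(\C(\om))$, pin down a power $f^m$ from the $y_1$-exponent, and then show the resulting differential on $\PP^1$ has too small a pole at $P$ to produce $x_P^{v-d_P\al_P(v)}$. But there are two concrete problems. First, your ``compact identity'' $g(y_1)\equiv (f^{p-1}-1)y_1-f$ modulo $y_1^p-y_1-f$ is false: from $g(y_1)=\tfrac1p\bigl(f^p+y_1^p-(f+y_1)^p\bigr)$ with $f+y_1=y_1^p$ one gets $g(y_1)\equiv -\sum_{i=1}^{p-1}\tfrac1p\binom{p}{i}f^{p-i}y_1^{\,i}$, which has $y_1$-degree $p-1$, not $1$. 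For $p=3$ this is $-fy_1^2-f^2y_1$, and already at $y_1=1$, $f=0$ your formula gives $-1$ while $g(1)=0$. Since your bookkeeping of which exponent $m$ of $f$ is forced by the $y_1$-matching rests on this identity, that step would go wrong; moreover, reducing to $y_1$-degree $<p$ is not by itself enough to apply $\C$ --- one still has to substitute $y_1=y_1^p-f$ and expand so that $p$-th powers of $y_1$ can be pulled out by $p^{-1}$-linearity, which is how Lemma~\ref{lemktcom} (and the paper's proof here) proceeds directly from the definition~\eqref{eqdefg} of $g$.

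Second, and more seriously, the decisive estimate is exactly the part you leave open. The missing observation is that \eqref{eqprecP=Q1} combined with the bound $v<pd_P+\epsilon_P$ from~\eqref{eqv<pd1} (together with $w\ge 0$ and $0\le a_1,b_1<p$) forces $b_2\le a_2+1$. Since $j\in\{a_2,a_2-1\}$, this leaves $b_2-j\le 2$, so only the handful of terms with $l\in\{0,1,2\}$ and at most one or two factors of $g(y_1)$ or $h$ can contribute, and each is then killed by a direct pole count: when only $f$-powers appear, the forced choice $m=b_1-a_1+\be_P(v)$ (or $m=b_1-a_1+\be_P(v)-p$) gives pole order at most $d_P(b_1-a_1+\be_P(v))+w$, which is $<d_P\be_P(v)+v$ by~\eqref{eqprecP=Q1}; when one factor of $h$ appears, the minimality bound $-\ord_P(h)<pd_P$ exactly compensates the shift of $m$ by $-p$, and \eqref{eqprecP=Q1} again closes the gap. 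Without first establishing $b_2\le a_2+1$, the ``balance'' you worry about for $b_2>a_2$ does not resolve as stated, and you acknowledge the argument is not closed there; so as written the proposal is an outline of the right approach with its key inequality and case analysis missing.
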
 
\begin{proof}
The same strategy as before is used. Equation~\eqref{eqprecP=Q1} and Equation~\eqref{eqv<pd1} together imply that $b_2 \leq a_2+1$. Moreover, by Equations \eqref{eqC(om')ktzero} and \eqref{eqkeytermprec} it follows that we are done if $b_2 < a_2-1$. We split the rest of the proof up into the two possibilities for $\ka(\C(\om))$: either $a_1 \geq \be_P(v)$ or $a_1 < \be_P(v)$.

In the case $a_1 \geq \be_P(v)$, we have
$$\ka(\C(\om))= y_2^{a_2} y_1^{a_1-\be_P(v)} x^{v-d_P\al_P(v)} dx.$$
This forces $b_2 \geq a_2$, so $b_2$ equals either $a_2$ or $a_2+1$. In the case $b_2=a_2$, possible contributions to $\ka(\C(\om))$ of $\C(\om')$ come from setting $j=a_2$ and $l=0$ in Equation~\eqref{eqC(om')ktzero}. This leaves the term
$$ y_2^{a_2} \C(y_1^{b_1} x_Q^w dx) = \sum_{m=0}^{b_1} (-1)^{m+1} \binom{b_1}{m} y_2^{a_2} y_1^{b_1-m} \C(f(x)^m x_P^w dx).$$
For the right exponent of $y_1$, we need $m=b_1-a_1+\be_P(v)$. This results in the term $\C(f(x)^{b_1-a_1+\be_P(v)} x_P^w)$, which has a pole of order $d_P(b_1-a_1+ \be_P(v))+w$, which is smaller than the required $d_P\be_P(v)+v$ by Equation~\eqref{eqprecP=Q1}. The case $b_2=a_2+1$ is handled similarly. In Equation~\eqref{eqC(om')ktzero}, possible contributions come from $j=a_2$ and $l\in\{0,1\}$. In both cases for $l$, the resulting differential $f(x)^m x_P^w dx$ does not have sufficiently large pole order at $P$ to contribute to $\ka(\C(\om))$.

In the case $a_1 < \be_P(v)$, we have
$$\ka(\C(\om))= y_2^{a_2-1} y_1^{a_1-\be_P(v)+p} x_P^{v-d_P\al_P(v)} dx.$$
This leaves three possibilities for $b_2$: $a_2-1$, $a_2$ and $a_2+1$. We treat only the case $b_2=a_2$, as the other cases are similar. In Equation~\eqref{eqC(om')ktzero} we must specialize to $j=a_2-1$. The possibilities $l=0$ and $l=1$ give the two terms $y_2^{a_2-1} \C(g(y_1) y_1^{b_1} x_P^w dx)$ and $y_2^{a_2-1} \C(y_1^{b_1} h(x) x_P^w dx).$ For the first term, we expand
\begin{align*}
    \C(g(y_1) y_1^{b_1}x_P^w dx) &= \sum_{i=1}^{p-1} (-1)^{i} \frac{(p-1)!}{i!(p-i)!} y_1^{p-i} \C(y_1^{b_1+i} x_P^w dx) \\
    &= \sum_{i=1}^{p-1} \frac{(p-1)!}{i!(p-i)!} \sum_{m=0}^{b_1+i} (-1)^{m+i+1} \binom{b_1+i}{m} y_1^{p-m+b_1} \C(f(x)^m x_P^w dx).
\end{align*}
To obtain the right exponent of $y_1$, setting $m=b_1-a_1+\be_P(v)$ is needed. Then Equation~\eqref{eqprecP=Q1} implies that $\C(f(x)^m x_P^w dx)$ does not have a sufficiently large pole order at $P$. For the second term, we expand
$$\C(y_1^{b_1} h(x) x_P^w dx) = \sum_{m=1}^{b_1} (-1)^{i} \binom{b_1}{m} y_1^{b_1-m} \C(h(x) f(x)^m x_P^w dx).$$
The right exponent of $y_1$ results only from $m=b_1-a_1+\be_P(v)-p$. Then since the pole order of $h(x)$ at $P$ is smaller than $pd_P$ (this is implied by the assumption that the upper jumps are minimal), the differential $h(x) f(x)^m x_P^w dx$ has pole order smaller than $d_P(b_1-a_1+\be_P(v))+w$. By Equation~\eqref{eqprecP=Q1}, this is smaller than the required $v + d_P\be_P(v)$. This implies that the coefficient of $\ka(\C(\om))$ is zero in $\C(\om')$. 
\end{proof}

\begin{lemma}[Case \ref{defpreccaseP=Q2} of Definition \ref{defprec}] \label{lemktzeroP=Q2}
Let $\om:=y_2^{a_2} y_1^{a_1} x_P^v dx \in H$ and $\om':=y_2^{b_2} y_1^{b_1} x_P^w dx \in W$ with $P \in B_1$ and $w+d_P(b_1+pb_2) = v + d_P (a_1+pa_2)$ and $w-d_P\al_P(w) > v-d_P\al_P(v)$. Then the coefficient of $\ka(\C(\om))$ is zero in $\C(\om')$.
\end{lemma}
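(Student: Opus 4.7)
The strategy is to mirror the casework of the proof of Lemma~\ref{lemktzeroP=Q1}, specializing Equation~\eqref{eqC(om')ktzero} to pairs $(j,l)$ producing the correct $y_2$-exponent of $\ka(\C(\om))$ and then showing either no valid $m$ matches the $y_1$-exponent, or the pole at $P$ of the Cartier input falls short of $d_P\be_P(v) + v$. The novelty compared to case~(iii) of Definition~\ref{defprec} is that the equality $w + d_P(b_1 + pb_2) = v + d_P(a_1 + pa_2)$ no longer yields an automatic pole deficit, so the hypothesis $w - d_P\al_P(w) > v - d_P\al_P(v)$ must be exploited to rule out the $y_1$-match.

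The plan is to first bound $b_2 \in \{a_2-1, a_2, a_2+1\}$ by rearranging the equality as $pd_P(b_2-a_2) = (v-w) + d_P(a_1-b_1)$ and applying Equation~\eqref{eqv<pd1}. The central sub-case is $b_2 = a_2$, in which $w = v + d_P k$ with $k := a_1 - b_1$. Using the $\Z$-linearity of $\g_P$ and the decomposition $\g_P(v-1) = p\al_P(v)+\be_P(v)$, I would derive
\begin{equation*}
\al_P(w) - \al_P(v) = \left\lfloor \frac{\be_P(v) + (p-1)k}{p} \right\rfloor = k + \left\lfloor \frac{\be_P(v) - k}{p} \right\rfloor,
\end{equation*}
so that $w - d_P\al_P(w) - (v - d_P\al_P(v)) = -d_P \lfloor (\be_P(v) - k)/p \rfloor$. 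The hypothesis of the lemma therefore reduces to $\lfloor (\be_P(v) - k)/p \rfloor < 0$, equivalently $k > \be_P(v)$.

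For the pair $(j,l) = (a_2, 0)$ in Equation~\eqref{eqC(om')ktzero}, relevant when $a_1 \ge \be_P(v)$, the expansion $y_1^{b_1} = (y_1^p - f)^{b_1}$ as in Lemma~\ref{lemktcom} forces $m = \be_P(v) - k$, which is negative by the above, so $\binom{b_1}{m} = 0$ and no contribution arises. In the sub-case $a_1 < \be_P(v)$, the pair $(j,l) = (a_2-1, 0)$ requires $m = \be_P(v) - k - p < 0$, eliminating that contribution as well. For $(j,l) = (a_2-1, 1)$, which can only occur when $a_1 < \be_P(v)$, I would reduce $g(y_1)y_1^{b_1}$ modulo $y_1^p = y_1 + f$ into $\sum_r C_r(f) y_1^r$ with $C_r \in k[f]$, and observe that each contribution from $r \ge R := a_1 - \be_P(v) + p$ requires a term of $x_P$-exponent $d_P\be_P(v) + v$ in $f^{r-R} C_r(f) x_P^w$; but every such exponent is at least $w$, and condition~(iv) forces $w > d_P\be_P(v) + v$, so no such term exists.

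The edge cases $b_2 = a_2 \pm 1$ are handled essentially as in the corresponding part of the proof of Lemma~\ref{lemktzeroP=Q1}: the minimality bound $\ord_P(h) > -pd_P$ together with the constraint equation show that the pole at $P$ of $f(x)^m h(x)^{b_2-j-l} x_P^w$, and more generally of the Cartier input when a $g(y_1)^l$ factor is present, is strictly less than $d_P\be_P(v) + v$, killing the contribution. The main obstacle throughout is the arithmetic identity translating condition~(iv) into $k > \be_P(v)$ and hence $m < 0$; once that is in hand, the remaining cases follow by routine adaptations of Lemma~\ref{lemktzeroP=Q1}.
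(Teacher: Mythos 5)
Your central identity is the right one and matches the paper's: writing $w=v+ed_P$ with $e=(a_1-b_1)+p(a_2-b_2)$, hypothesis (iv) is equivalent to $e>\be_P(v)$, and then the $y_1$-exponent of the key term forces the index $m$ in the expansion of $y_1^{b_1}$ to be $\be_P(v)-e<0$, killing the contribution. You carry this out correctly only for $b_2=a_2$ (your $k=a_1-b_1$), which covers the key-term case $a_1\geq\be_P(v)$. The gap is in the remaining, genuinely occurring case $b_2=a_2-1$, i.e.\ $b_1=a_1-e+p$ with $e=a_1-b_1+p$, which is exactly the case $a_1<\be_P(v)$ of the key term. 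There the only admissible pair in Equation~\eqref{eqC(om')ktzero} is $(j,l)=(a_2-1,0)$ with $h$-exponent $b_2-j-l=0$: neither $h$ nor $g$ appears, so the minimality bound $\ord_P(h)>-pd_P$ that you invoke is irrelevant, and if one formally takes $m=\be_P(v)-e$ the pole order of $f^mx_P^w$ at $P$ is exactly $d_P\be_P(v)+v$, not strictly smaller. So the Lemma~\ref{lemktzeroP=Q1}-style pole-deficit argument you defer to for the ``edge cases'' fails precisely here: the equality hypothesis of case (iv) is the borderline situation. The correct mechanism, as in the paper, is to derive $e>\be_P(v)$ for this value of $e$ as well (same floor computation as your central case), whence $b_1=a_1-e+p<a_1-\be_P(v)+p$ and the required $y_1$-exponent is unreachable because $m$ would be negative.

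Two secondary points. The sub-cases you handle by ad hoc arguments are in fact vacuous: under (iv) one has $e\geq 1$, so $b_2=a_2+1$ would give $b_1=a_1-e-p<0$, and within $b_2=a_2$ the combination $a_1<\be_P(v)$ contradicts $\be_P(v)<k\leq a_1$; the paper records this in Equation~\eqref{eqom'P=Q2} and thereby never needs the pairs $(a_2-1,0)$, $(a_2-1,1)$ when $b_2=a_2$. Moreover, your argument for $(j,l)=(a_2-1,1)$ --- that every $x_P$-exponent occurring in $f^{\,r-R}C_r(f)x_P^w$ is at least $w$ --- is not sound when $f$ has poles at other branch points: partial-fraction reduction of cross terms $x_P^{a}x_Q^{b}$ produces $x_P$-exponents strictly below $w$, so one cannot conclude that the exponent $d_P\be_P(v)+v<w$ is absent. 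That case happens to be empty, but the reasoning should be replaced by the vacuousness observation (or by the negative-$m$ argument), and the $b_2=a_2-1$ case must be repaired as above for the proof to be complete.
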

\begin{proof}
From the first condition, we infer $w \equiv v \bmod {d_P}$, so we write $w=v+ed_P$ and $b_1+pb_2=a_1+pa_2-e$. By the second condition, we must have 
\begin{align*}
    \al_P(w)&=\al_P(v+ed_P)=\al_P(v)+e-1. \intertext{This implies} 
    \beta_P(w)&=\be_P(v+ed_P)=\be_P(v)-e+p.
\end{align*}
Note that this forces $e>\be_P(v)$. We have
\begin{equation} \label{eqom'P=Q2}
\omega' = \begin{cases}
y_2^{a_2} y_1^{a_1-e} x_P^{v+ed_P} dx &\hbox{if $a_1 \geq e$} \\
y_2^{a_2-1} y_1^{a_1-e+p} x_P^{v+ed_P} dx &\hbox{if $a_1<e$}.
\end{cases} 
\end{equation}
We again consider the two possibilities determining the key term of $\om$: $a_1 \geq \be_P(v)$ and $a_1<\be_P(v)$. 

We treat the case $a_1 \geq \be_P(v)$ first, in which the key term is
$$\ka(\C(\om))= y_2^{a_2} y_1^{a_1-\be_P(v)} x_P^{v-d_P\al_P(v)} dx.$$
To achieve this as a term in $\C(\om')$, we need $b_2 \geq a_2$. Appealing to Equation~\eqref{eqom'P=Q2}, only the case $a_1 \geq e$ remains. In Equation~\eqref{eqC(om')ktzero} we need $j=a_2$ and $l=0$ to obtain the required exponent of $y_2$. This gives the term
$$y_2^{a_2} \C(y_1^{a_1-e} x_P^{v+ed_P} dx) = \sum_{m=0}^{a_1-e} (-1)^m \binom{a_1-e}{m} y_2^{a_2} y_1^{a_1-e-m} \C(f(x)^m x_P^w dx).$$
In order to obtain the exponent of $y_1$, setting $m=\be_P(v)-e$ is required, but this is negative as we have ascertained $e > \be_P(v)$. Thus the right exponent of $y_1$ cannot be achieved.

On the other hand, assume $a_1 < \be_P(v)$. Note that this implies $a_1 < e$, since $\be_P(v)<e$. By Equation~\eqref{eqom'P=Q2}, this determines $\om'$. In Equation~\eqref{eqC(om')ktzero}, we have to specialize to $j=a_2-1$ and $l=0$, yielding the term
$$ y_2^{a_2-1} \C(y_1^{a_1-e+p} x_P^w dx) = \sum_{m=0}^{a_1-e+p} (-1)^m \binom{a_1-e+p}{m} y_2^{a_2-1} y_1^{a_1-e+p-m} \C(f(x)^m x_P^w dx).$$
Again the required exponent of $y_1$ can only be attained if $m=\be_P(v)-e$, which is negative. 
\end{proof}

\begin{lemma}[Case \ref{defpreccaseP=Qh1} of Definition \ref{defprec}] \label{lemktzeroP=Qh1}
Let $\om:=y_2^{a_2}y_1^{a_1}x_P^v dx\in H$ and $\om':=y_2^{b_2} y_1^{b_1}x_P^{w}$ with $P \in B_2 \setminus B_1$ and $w+e_P b_2 < v+e_Pa_2$. Then the coefficient of $\ka(\C(\om))$ is zero in $\C(\om')$.
\end{lemma}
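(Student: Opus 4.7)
The plan is to reduce the problem to a pole-order comparison at $P$, mirroring the strategy used in Lemmas~\ref{lemktzeroPneqQ1} through \ref{lemktzeroP=Q2}. The crucial simplification specific to this case is that $P \in B_2 \setminus B_1$, so $f$ is regular at $P$; consequently the Artin-Schreier substitution $y_1 = y_1^p - f(x)$ does not introduce any new pole at $P$, and the only source of $P$-poles inside the Cartier operator comes from $h(x)$ and $x_P^w$.

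First I would expand $\C(\om')$ using the formula of Lemma~\ref{lemC(om)}. Since $\ka(\C(\om)) = y_2^{a_2 - \be_P(v)} y_1^{a_1} x_P^{v - e_P \al_P(v)} dx$ has $y_2$-exponent exactly $a_2 - \be_P(v)$, only the terms with $j = a_2 - \be_P(v)$ in the outer sum can contribute; if $b_2 < a_2 - \be_P(v)$ no such term exists and we are done immediately. Otherwise, for each $0 \leq l \leq b_2 - a_2 + \be_P(v)$, I would examine the inner differential
\begin{equation*}
g(y_1)^l \, h(x)^{b_2 - a_2 + \be_P(v) - l} \, y_1^{b_1} \, x_P^w \, dx.
\end{equation*}
Reducing the polynomial $g(y_1)^l \, y_1^{b_1}$ modulo $y_1^p = y_1 + f(x)$ and extracting the coefficient of $y_1^{a_1}$ yields a differential on $\PP^1$. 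Because $f$ is regular at $P$, every power $f^s$ appearing in the reduction is regular at $P$, so the coefficient differential has pole order at $P$ bounded above by $e_P(b_2 - a_2 + \be_P(v) - l) + w$.

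Next I would invoke the computation from the proof of Lemma~\ref{lemcompoleh}: in order to produce the exponent $v - e_P \al_P(v)$ of $x_P$ under $\C$, the input differential on $\PP^1$ must contain the term $x_P^{e_P \be_P(v) + v}\, dx$, and in particular must have a pole of at least that order at $P$. Comparing with the upper bound above gives the necessary condition $e_P(b_2 - a_2 - l) + w \geq v$, equivalently $w + e_P b_2 \geq v + e_P a_2 + e_P l$. Since $l \geq 0$ this would force $w + e_P b_2 \geq v + e_P a_2$, contradicting the hypothesis. Thus the coefficient of $\ka(\C(\om))$ in $\C(\om')$ vanishes.

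The only mildly delicate point I anticipate is the bookkeeping for the Artin-Schreier reduction of $g(y_1)^l y_1^{b_1}$ to a polynomial of $y_1$-degree less than $p$: one must be sure that picking up factors of $f$ in the coefficients cannot worsen the $P$-adic pole. This is precisely where the hypothesis $P \in B_2 \setminus B_1$ is essential, as $f \in \hat{\mathcal{O}}_{\PP^1, P}$, so no iteration of the substitution can introduce a pole at $P$. Once this is noted the argument becomes a direct arithmetic comparison, with no further obstacle.
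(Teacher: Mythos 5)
Your proposal is correct and follows essentially the same route as the paper's proof: specialize to $j=a_2-\be_P(v)$ to match the $y_2$-exponent, bound the exponent of $h(x)$ by $b_2-a_2+\be_P(v)$, and observe that since $f$ (and hence the Artin--Schreier reduction of $g(y_1)^l y_1^{b_1}$) is regular at $P$, the resulting pole order at $P$ is at most $e_P(b_2-a_2+\be_P(v))+w$, which falls short of the required $e_P\be_P(v)+v$ by the hypothesis $w+e_Pb_2<v+e_Pa_2$. Your write-up merely makes explicit (the role of $l$, the regularity of $f$ at $P$, and the case $b_2<a_2-\be_P(v)$) what the paper leaves implicit.
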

\begin{proof}
In order to obtain the key term $\ka(\C(\om))$, we have to take $j=a_2-\be_P(v)$. Then the exponent of $h(x)$ can be at most $b_2-a_2+\be_P(v)$, making the exponent of $x_P$ at most $e_P(b_2-a_2+\be_P(v))+w$, which by our assumption is smaller than the required $e_P \be_P(v) + v$. 
\end{proof}

\begin{lemma}[Case \ref{defpreccaseP=Qh2} of Definition \ref{defprec}] \label{lemktzeroP=Qh2}
Let $\om:=y_2^{a_2}y_1^{a_1}x_P^v dx\in H$ and $\om':=y_2^{b_2} y_1^{b_1}x_P^{w}$ with $P \in B_2 \setminus B_1$, $w+e_P b_2 = v+e_Pa_2$ and $b_1 < a_1$. Then the coefficient of $\ka(\C(\om))$ is zero in $\C(\om')$.
\end{lemma}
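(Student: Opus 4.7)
The plan is to follow the strategy of the preceding cases, starting from Equation~\eqref{eqC(om')ktzero} applied to $\om'$. Since the integrands inside $\C$ contain no $y_2$, matching the $y_2^{a_2-\be_P(v)}$ factor of the key term $\ka(\C(\om))$ forces $j = a_2 - \be_P(v)$. The task then reduces to showing that for each $0 \le l \le b_2 - a_2 + \be_P(v)$, the differential $\C(g(y_1)^l h(x)^{m_l} y_1^{b_1} x_P^w dx)$, with $m_l := b_2 - a_2 + \be_P(v) - l$, has no $y_1^{a_1} x_P^{v-e_P\al_P(v)} dx$ contribution.

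I would split into two subcases based on $l$. For $l = 0$, the integrand is $h(x)^{m_0} y_1^{b_1} x_P^w dx$ with no $g(y_1)$ factor. Expanding via $y_1 = y_1^p - f$ yields a sum in which the $y_1$-exponents in the Cartier image are bounded by $b_1$; since $b_1 < a_1$ by hypothesis, no $y_1^{a_1}$ term can appear. For $l \ge 1$, I would rule out the $x_P^{v - e_P \al_P(v)} dx$ contribution by a pole-order argument at $P$. Because $P \notin B_1$, the function $f$ is regular at $P$, whence so are $y_1$ and $g(y_1)$ at points of $Y_1$ above $P$; consequently the pole of $g(y_1)^l h^{m_l} y_1^{b_1} x_P^w$ at $P$ is bounded by $e_P m_l + w$. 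Applying the hypothesis $w + e_P b_2 = v + e_P a_2$, this bound simplifies to $v + e_P \be_P(v) - e_P l$, which is strictly less than $v + e_P \be_P(v)$ for $l \ge 1$. Producing the $x_P^{v - e_P \al_P(v)} dx$ term through $\C$ would require an $x_P^{v+e_P\be_P(v)}$ term in the integrand's Laurent expansion at $P$---a consequence of the identity $v + e_P \be_P(v) = p(v - e_P \al_P(v)) - (p-1)$ together with the formula $\C(x_P^{ap+1} dx) = x_P^{a+1} dx$ for $P$ finite---and no such term exists when the pole is too low.

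The main obstacle is the simultaneous handling of two distinct obstructions across the range of $l$: the $y_1$-exponent bound rules out $l = 0$, while the $x_P$-pole bound rules out $l \ge 1$. Neither alone suffices to kill all contributions, and combining them gives the desired vanishing of the coefficient of $\ka(\C(\om))$ in $\C(\om')$. A minor subtlety worth checking in the writeup is that the reductions $y_1^{k-n} \bmod (y_1^p - y_1 - f)$ needed when $k - n \ge p$ only introduce regular-at-$P$ factors of $f$, so they cannot enlarge the maximal $x_P$-exponent computed above; this ensures the pole-order bound for $l \ge 1$ is not broken by hidden reductions.
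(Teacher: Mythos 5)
Your proposal is correct and takes essentially the same approach as the paper: the $y_2$-exponent forces $j=a_2-\be_P(v)$, the pole-order bound at $P$ (using $w+e_Pb_2=v+e_Pa_2$ and the regularity of $f$ at $P$) rules out every term with $l\geq 1$, and the remaining $l=0$ term cannot contribute because its $y_1$-exponents are at most $b_1<a_1$. The paper's proof is simply a terser version of this, leaving the pole-order computation for $l\geq 1$ implicit in the phrase ``$l=0$ to get the right exponents of $y_2$ and $x_P$.''
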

\begin{proof}
In Equation~\eqref{eqC(om')ktzero}, we must take $j=a_2-\be_P(v)$ and $l=0$ to get the right exponents of $y_2$ and $x_P$. Then the differential 
$$\C(y_1^{b_1} h(x)^{b_2-a_2+\be_P(v)} x_P^w dx)$$
cannot have a term in which exponent of $y_1$ is $a_1$. This finishes the proof. 
\end{proof}

\begin{lemma}[Case \ref{defpreccaseP=Qh3} of Definition \ref{defprec}] \label{lemktzeroP=Qh3}
Let $\om:=y_2^{a_2}y_1^{a_1}x_P^v dx\in H$ and $\om':=y_2^{b_2} y_1^{b_1}x_P^{w}$ with $P \in B_2 \setminus B_1$, $w+e_P b_2 = v+e_Pa_2$, $b_1 = a_1$ and $w-e_P \al_P(w) > v-e_P \al_P(v)$. Then the coefficient of $\ka(\C(\om))$ is zero in $\C(\om')$.
\end{lemma}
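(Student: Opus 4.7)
The plan is to mirror Lemma~\ref{lemktzeroP=Qh1} and show that the exponent of $y_2$ demanded by $\ka(\C(\om))$ outstrips what $\C(\om')$ can produce; the obstruction now comes from the strict inequality on $\al_P$ rather than from $w + e_P b_2 < v + e_P a_2$ or $b_1 < a_1$.

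First I would translate the hypotheses into a bound on $r := a_2 - b_2$. The equality $w + e_P b_2 = v + e_P a_2$ gives $w = v + e_P r$, and using $\delta_P e_P = p-1$ together with $\delta_P(v-1) = p\al_P(v) + \be_P(v)$, a short floor-function manipulation yields
$$\al_P(w) = \al_P(v) + r + \left\lfloor \frac{\be_P(v) - r}{p} \right\rfloor,$$
so that
$$w - e_P\al_P(w) = v - e_P\al_P(v) - e_P \left\lfloor \frac{\be_P(v) - r}{p} \right\rfloor.$$
The hypothesis $w - e_P\al_P(w) > v - e_P\al_P(v)$ is therefore equivalent to $\lfloor (\be_P(v) - r)/p \rfloor < 0$; a brief case check using $0 \leq \be_P(v) < p$ and $|r| \leq p-1$ shows this is the same as $r > \be_P(v)$, i.e.\ $b_2 < a_2 - \be_P(v)$.

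With this bound in place, the conclusion follows exactly as in Lemma~\ref{lemktzeroP=Qh1}. The key term reads $\ka(\C(\om)) = y_2^{a_2 - \be_P(v)} y_1^{a_1} x_P^{v - e_P\al_P(v)} dx$, and in \eqref{eqC(om')ktzero} the exponent of $y_2$ in every summand is exactly $j \leq b_2$, since $\C$ applied to a differential with no $y_2$ dependence again produces a differential with no $y_2$ dependence. To obtain the required $y_2^{a_2 - \be_P(v)}$ would force $j = a_2 - \be_P(v) > b_2$, which is impossible, so the coefficient of $\ka(\C(\om))$ in $\C(\om')$ vanishes. The only mildly delicate step I foresee is the floor-function bookkeeping behind the formula for $\al_P(w)$; once that identity is secured, the rest is a direct consequence of the $y_2$-exponent in each summand of $\C(\om')$ being bounded by $b_2$.
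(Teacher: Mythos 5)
Your proposal is correct and follows essentially the same route as the paper: writing $w=v+e_Pr$ with $r=a_2-b_2$, deducing $r>\be_P(v)$ from the hypothesis $w-e_P\al_P(w)>v-e_P\al_P(v)$ (the paper phrases this as $\al_P(w)=\al_P(v)+r-1$, which is exactly your floor computation), and then observing that the required $y_2$-exponent $a_2-\be_P(v)$ in $\ka(\C(\om))$ exceeds $b_2$, so no summand of $\C(\om')$ can contribute. The floor-function identity you flag as the delicate step is sound, so nothing further is needed.
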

\begin{proof}
The proof is similar to the proof of Lemma~\ref{lemktzeroP=Q2}. Write $b_2=a_2-r$ and $w=v+re_P$ for some integer $r$. From the inequality, we infer
\begin{align*}
    \al_P(w)&=\al_P(v+re_P)=\al_P(v)+r-1 \\
    \beta_P(w)&=\be_P(v+re_P)=\be_P(v)-r+p.
\end{align*}
This forces $r > \be_P(v)$. Then in Equation~\eqref{eqC(om')ktzero}, we need to take $j=a_2-\be_P(v)$ in order to obtain the right exponent of $y_2$. But this is impossible as it exceeds $b_2=a_2-r$. Therefore there can be no contribution to $\ka(\C(\om))$ in $\C(\om')$.
\end{proof}

Lemma~\ref{lemktzero} now follows.

\begin{proof}[Proof of Lemma~\ref{lemktzero}]
We have treated all seven cases of $\om' \prec \om$ given in Definition~\ref{defprec} in Lemmas \ref{lemktzeroPneqQ1}, \ref{lemktzeroPneqQ2}, \ref{lemktzeroP=Q1}, \ref{lemktzeroP=Q2}, \ref{lemktzeroP=Qh1}, \ref{lemktzeroP=Qh2} and \ref{lemktzeroP=Qh3}. In every case, the coefficient of $\ka(\C(\om))$ is zero in $\C(\om')$.
\end{proof}

We now finally use key terms to prove a lower bound on the rank of the Cartier operator. Recall that $K$ is the set of key terms and that $H \subset W$ is the set of basis regular differentials for which key terms are defined.

\begin{theorem} \label{thmrkCgeqK}
Assume that for each $\om' \in K$, there is an $\om\in H$ such that $\ka(\C(\om))=\om'$ and $c_\om \neq 0$. Then we have $\rk(\C) \geq \#K$.
\end{theorem}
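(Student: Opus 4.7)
The plan is to exhibit an explicit $\#K$-element subset $H' \subset H$ whose images under the Cartier operator are linearly independent, which directly forces $\rk(\C) \geq \#K$. The hypothesis does the hard work: for each key term $\om' \in K$, I may select a representative differential $\om_{\om'} \in H$ with $\ka(\C(\om_{\om'}))=\om'$ and $c_{\om_{\om'}} \neq 0$. Set $H' := \{\om_{\om'} : \om' \in K\}$, so that by construction $|H'|=\#K$ and the map $H' \to K$ sending $\om \mapsto \ka(\C(\om))$ is a bijection.

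The key observation is that $\prec$ restricts to a \emph{total} order on $H'$. Indeed, by Remark~\ref{rmkincomparableiffsamekeyterm}, two differentials are $\prec$-incomparable precisely when they share a key term; but the elements of $H'$ have pairwise distinct key terms, so they are pairwise $\prec$-comparable. Enumerate them as $\om_1 \prec \om_2 \prec \cdots \prec \om_n$ where $n = \#K$.

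Now form the $n \times n$ matrix $M$ whose $(i,j)$-entry is the coefficient of $\ka(\C(\om_i))$ when $\C(\om_j)$ is expanded in the basis $W$. For $j<i$ we have $\om_j \prec \om_i$, so Lemma~\ref{lemktzero} applied to $\om = \om_i$ and $\om' = \om_j$ yields $M_{ij} = 0$. The diagonal entries are $M_{ii} = c_{\om_i} \neq 0$ by our choice of representatives. Hence $M$ is triangular with nonzero diagonal, so it is invertible, which implies that the differentials $\C(\om_1), \ldots, \C(\om_n)$ are linearly independent in $\cohom^0(Y_2, \Om_{Y_2}^1)$.

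Since $\{\C(\om_i)\}_{i=1}^n$ lies in the image of $\C$, we conclude $\rk(\C) \geq n = \#K$, as desired. There is no real obstacle: the substance has already been absorbed into Lemma~\ref{lemktzero} (triangularity) and into the hypothesis of the theorem (nonvanishing of the diagonal entries); the only point requiring a moment's thought is verifying that $\prec$ totally orders $H'$, which is immediate from Remark~\ref{rmkincomparableiffsamekeyterm}.
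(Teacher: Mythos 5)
Your proposal is correct and is essentially the paper's own argument: the paper likewise selects a set $H'$ of representatives in bijection with $K$, notes via Lemma~\ref{lemsamekeyterms} and Remark~\ref{rmkincomparableiffsamekeyterm} that $\prec$ is total on $H'$, and shows the matrix of $\C$ restricted to $\Span_k H'$ and projected to $\Span_k K$ is triangular with nonzero diagonal (Lemma~\ref{lemktzero} for the zeros, the hypothesis together with Lemmas~\ref{lemktcom} and~\ref{lemcompoleh} for the diagonal), hence invertible. Your matrix $M$ is exactly that matrix, so no further comment is needed.
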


\begin{proof}
By the assumptions, we can find a subset $H' \subseteq H$ such that every $\om'\in K$ has a unique $\om\in H'$ with the properties $\ka(\C(\om))=\om'$ and $c_\om \neq 0$. Note that $H'$ and $K$ have the same cardinality. By Lemma~\ref{lemsamekeyterms} and Remark \ref{rmkincomparableiffsamekeyterm}, the partial order $\prec$ is a linear order when restricted to $H'$. We restrict the Cartier operator to $\Span_k H'$ and then project to $\Span_k K$:
$$ \C |_{H'}: \Span_k H' \to \Span_k K.$$
By Lemma~\ref{lemktzero}, the matrix respresenting $C|_{H'}$ with respect to the basis $H'$ ordered by $\prec$ has only zeros below the diagonal. Furthermore, by Lemma~\ref{lemktcom}, the diagonal entries are $c_{\om}$ in the case $P \in B_1$. These are non-zero by assumption. In the case $P \in B_2 \setminus B_1$, $c_\om$ is non-zero unconditionally by Lemma~\ref{lemcompoleh}. Hence $\C|_{H'}$ is invertible, implying that $\rk(\C) \geq \#K.$
\end{proof}

\begin{remark} \label{rmk:imagenotspan}
Optimistically one might hope to prove the equality $\im(\C) = \Span_k K$. However, both inclusions fail already in the case $p=3$. See Remark~\ref{rmk:p=3imagenotkeyterm}.
\end{remark}

\section{The case \texorpdfstring{$p=3$}{}} \label{sec:p=3}

In this section we use the machinery from the preceding sections to prove Conjecture~\ref{conjnumberminimal} for $\Z/9\Z$-covers of $\PP^1$ in characteristic $3$. In Theorem~\ref{thmp=3}, we prove that the upper bound from Theorem~\ref{thmrkCgeqK} is sharp for $\Z/9\Z$ covers of $\PP^1$.

Let $\phi_2:Y_2 \to \PP^1$ be such a cover, with branching datum
\begin{equation*}
\label{eqnminimalp=3}
  \left[
\begin{array}{rrrrrrrr}
3&  \ldots  & 2 & \ldots & 0 & \ldots & 0 & \ldots \\ 
\undermat{n_1}{7 &  \ldots} & \undermat{n_2}{4 &  \ldots} &  \undermat{n_3}{3 &  \ldots}  &  \undermat{n_4}{{2} &  \ldots}\\ 
\end{array} 
\right]^{\intercal}.
\end{equation*}
\\
Recall the models
\begin{align}
    Y_1:y_1^3-y_1 &= f(x)  \\
    Y_2:y_2^3-y_2 &= g(y_1)+h(x),
\end{align}
where 
\begin{equation*}
g(y_1)= \sum_{i=1}^{2} (-1)^{i} \frac{(2)!}{i!(3-i)!} y_1^{3(3-i)+i} = -y_1^7+y_1^5.
\end{equation*}

As before, let $B_1$ be the branch locus of $\phi_1: Y_1 \to \PP^1$. For $P \in B_1$, let $d_P$ be the unique break in the lower-numbering ramification filtration at $P$. Minimality of the cover (see Definition~\ref{defnminimalcyclic}) implies $d_P | p-1$. We may assume $f$ has a pole of order $d_P$ at every $P \in B_1$ and no poles elsewhere. By Theorem~\ref{theoremcaljumpirred}, the pole order of $h$ at $P$ can be assumed to be less than $pd_P$. Without loss of generality, we assume $\infty \in B_1$.

Similarly, let $B_2$ be the branch locus of the cover $\phi_2: Y_2 \to \PP^1$. For $P \in B_2 \setminus B_1$, let $e_P$ be the ramification jump of the cover $Y_2 \to Y_1$ at $P$. By minimality, we have $e_P | p-1$. 

In Theorem~\ref{thmp=3} we prove that the $a$-number of $Y_2$ depends only on the ramification jumps $d_P$ and $e_P$. We do this by showing that the lower bound provided by Theorem~\ref{thmrkCgeqK} is sharp. We compute the basis of differentials $W_P$ given in Equation~\eqref{eqdefWinf} and Equation~\eqref{eqdefWP}. This results in six cases. We distinguish between $P=\infty$, $P \in B_1 \setminus \{\infty\}$ and $P \in B_2 \setminus B_1$. Moreover, we distinguish different cases depending on whether the ramification jump ($d_P$ or $e_P$) equals $1$ or $2$. These cases are treated in the following subsections. All of this information is combined to prove Theorem~\ref{thmp=3}.

In the case $p=3$, Lemma~\ref{lemcomnon-zero} guarantees that the coefficients $c_\om$ are non-zero. Note that by construction we have $0 \leq \be_P(v) \leq 2$. In the case $\be_P(v)=0$, the condition $\be_P(v) \leq a_1$ is satisfied. Otherwise, the condition $\be_P(v) \geq p-2=1$ is satisfied, so Lemma~\ref{lemcomnon-zero} always applies. This is used throughout this section.

\subsection{A pole of \texorpdfstring{$f$}{} of order \texorpdfstring{$1$}{} at infinity} \label{subsec:P=infd=1}

This section concerns the action of $\C$ on $W_\infty$, assuming $f$ has a pole at $\infty$ of order $d_\infty=1$. The definition of $W_\infty$, from Equation~\eqref{eqdefWinf}, becomes
\begin{align*}
W_\infty &= \begin{aligned}
     \begin{cases} y_2^{a_2} y_1^{a_1} x^v dx \; | \; 0 \leq a_1,a_2 <3, \\  
       0 \leq 9 v \leq  3(2-a_1) + 7(2-a_2)-10 \end{cases}  \Bigg\} \end{aligned} \\
       &= \{dx, xdx, y_1dx, y_1^2 dx, y_2 dx, y_2y_1dx\}.
\end{align*}

We study the restricted Cartier operator $\C_{W_\infty}: \Span_k W_\infty \to \cohom^0(Y_2, \Om_{Y_2}^1)$, using machinery from Section \ref{Sec:Keyterms}. For each element of $W_\infty$, we check whether it is in $H_\infty$. For the differentials $\om$ that lie in $H_\infty$, we compute the key term $\ka(\C(\om))$ and the coefficient $c_\om$. This results in Table~\ref{tablep=3P=infd=1}. 
\begin{table}[h] \label{tablep=3P=infd=1}
\centering
\begin{tabular}{|l|l|l|l|l|l|l|l|l|}
$a_2$ & $a_1$ & $v$ & $\omega$    & $\al_\infty(v)$ & $\be_\infty(v)$ & $\ka(\C(\om))$ & $c_{\om}$                                 \\ \hline
$0$ & $0$ & $0$ & $dx$       & & & &   \\ \hline
$0$ & $0$ & $1$ & $xdx$      & & & &   \\ \hline
$0$ & $1$ & $0$ & $y_1dx$    & & & &   \\ \hline
$0$ & $2$ & $0$ & $y_1^2dx$  & $0$ & $2$ & $dx$ &            $\binom{2}{2}=1$  \\\hline
$1$ & $0$ & $0$ & $y_2 dx$   & $0$ & $2$ & $y_1dx$& $-\frac{2!}{2!1!}\binom{2}{2}=-1$ \\ \hline
$1$ & $1$ & $0$ & $y_2y_1dx$  & $0$ & $2$ & $y_1^2 dx$ &              $\frac{2!}{1!2!}\binom{2}{2}=1$  \\ \hline
\end{tabular}
\caption{The computation of key terms in the case $P=\infty$ and $d_\infty=1$.}
\end{table}
There are three different key terms, which contribute $3$ to the rank of $\C$ via Theorem~\ref{thmrkCgeqK}. In order to show later that the remainig elements of $W_\infty$ do not contribute to the rank, we apply the Cartier operator to the three elements of $W_\infty \setminus H_\infty$.
\begin{align*}
    \C(dx) &= 0 \\
    \C(xdx) &= 0 \\
    \C(y_1dx) &= \C((y_1^p-f(x))dx) = y_1 \C(dx) - \C(f(x)dx) \\
    &= -\sum_{Q \in B} \C (f_Q(x_Q)dx) \in \Span_k \{ x_Qdx \; | \; Q \in B_1 \setminus \{\infty\} \}.
\end{align*}

\subsection{A pole of \texorpdfstring{$f$}{} of order \texorpdfstring{$1$}{} away from infinity} \label{subsec:Pnotinfd=1}

We now consider the case $d_P=1$, where $P \neq \infty$. We compute the basis
$$\begin{aligned}
    W_P=\begin{cases} y_2^{a_2} y_1^{a_1} x_P^v dx \; | \; 0 \leq a_1,a_2 <3, \\  
       0 < 9 v \leq  3(2-a_1) + 7(2-a_2)+8 \end{cases}  \Bigg\}.
\end{aligned}$$

Table~\ref{tablep=3Pnotinfd=1} shows the $14$ differentials and their key terms, if they exist. The coefficients $c_\omega$ are omitted, as Lemma~\ref{lemcomnon-zero} provides that they are non-zero.

\begin{table}[h] \label{tablep=3Pnotinfd=1}
\centering
\begin{tabular}{|l|l|l|l|l|l|l|l|l|}
$a_2$ & $a_1$ & $v$ & $\omega$   & $\al_P(v)$ & $\be_P(v)$ & $\ka(\C(\om))$                              \\ \hline
$0$ & $0$ & $1$ & $x_Pdx$          & $0$ & $0$ & $x_Pdx$         \\ \hline
$0$ & $0$ & $2$ & $x_P^2dx$        & $0$ & $2$ &                 \\ \hline
$0$ & $0$ & $3$ & $x_P^3dx$        & $1$ & $1$ &                 \\ \hline
$0$ & $1$ & $1$ & $y_1x_Pdx$       & $0$ & $0$ & $y_1x_Pdx$      \\ \hline
$0$ & $1$ & $2$ & $y_1x_P^2dx$     & $0$ & $2$ &                 \\ \hline
$0$ & $2$ & $1$ & $y_1^2x_Pdx$     & $0$ & $0$ & $y_1^2x_Pdx$    \\ \hline
$0$ & $2$ & $2$ & $y_1^2x_P^2dx$   & $0$ & $2$ & $x_P^2dx$       \\ \hline
$1$ & $0$ & $1$ & $y_2x_Pdx$       & $0$ & $0$ & $y_2x_Pdx$      \\ \hline
$1$ & $0$ & $2$ & $y_2x_P^2dx$     & $0$ & $2$ & $y_1x_P^2dx$    \\ \hline
$1$ & $1$ & $1$ & $y_2y_1x_Pdx$    & $0$ & $0$ & $y_2y_1x_Pdx$   \\ \hline
$1$ & $1$ & $2$ & $y_2y_1x_P^2dx$ & $0$ & $2$ & $y_1^2x_P^2dx$  \\ \hline
$1$ & $2$ & $1$ & $y_2y_1^2x_Pdx$   & $0$ & $0$ & $y_2y_1^2x_Pdx$ \\ \hline
$2$ & $0$ & $1$ & $y_2^2x_Pdx$     & $0$ & $0$ & $y_2^2x_Pdx$    \\ \hline
$2$ & $1$ & $1$ & $y_2^2y_1x_Pdx$  & $0$ & $0$ & $y_2^2y_1x_Pdx$ \\ \hline
\end{tabular}
\caption{The computation of key terms in the case $P \in B_1 \setminus \{\infty\}$ and $d_P=1$.}
\end{table}

The table shows that all the $11$ key terms are different. The basis elements without a key term require more attention.
\begin{align*}
    \C(x_P^2 dx)     &= 0 \\
    \C(x_P^3 dx)     &= 0 \\ 
    \C(y_1 x_P^2 dx) &= y_1 \C(x_P^2 dx) - \C(x_P^2 f(x) dx) \\
    & \in \begin{cases} 
    \Span_k \{x_P dx, x_Q dx \; | \; Q \in B_1 \setminus \{P,\infty\} \} &\hbox{if $d_\infty = 1$} \\
    \Span_k \{dx, x_P dx, x_Q dx \; | \; Q \in B_1 \setminus \{P, \infty\}\} &\hbox{if $d_\infty = 2$}.
    \end{cases}
\end{align*}

\subsection{A pole of \texorpdfstring{$f$}{} of order \texorpdfstring{$2$}{} at infinity} \label{subsec:P=infd=2}

Assume $f$ has a pole of order $2$ at $P=\infty$. We then compute the basis
$$\begin{aligned}
    W_\infty = \begin{cases} y_2^{a_2} y_1^{a_1} x^v dx \; | \; 0 \leq a_1,a_2 <3, \\  
       0 \leq 9 v \leq  6(2-a_1) + 14(2-a_2)-10 \end{cases}  \Bigg\}.
\end{aligned}$$


Table~\ref{tablep=3P=infd=2} shows these $16$ basis differentials and their key terms, if they exist. 

\begin{table}[h] \label{tablep=3P=infd=2}
\centering
\begin{tabular}{|l|l|l|l|l|l|l|l|l|}
$a_2$ & $a_1$ & $v$ & $\omega$   & $\al_\infty(v)$ & $\be_\infty(v)$ & $\ka(\C(\om))$                              \\ \hline
$0$ & $0$ & $0$ & $dx$         & $0$ & $1$ &           \\ \hline
$0$ & $0$ & $1$ & $xdx$        & $0$ & $2$ &            \\ \hline
$0$ & $0$ & $2$ & $x^2dx$      & $1$ & $0$ & $dx$            \\ \hline
$0$ & $0$ & $3$ & $x^3dx$      & $1$ & $1$ &             \\ \hline
$0$ & $1$ & $0$ & $y_1dx$      & $0$ & $1$ & $dx$            \\ \hline
$0$ & $1$ & $1$ & $y_1xdx$     & $0$ & $2$ &            \\ \hline
$0$ & $1$ & $2$ & $y_1x^2dx$   & $1$ & $0$ & $y_1dx$         \\ \hline
$0$ & $2$ & $0$ & $y_1^2dx$    & $0$ & $1$ & $y_1dx$         \\ \hline
$0$ & $2$ & $1$ & $y_1^2xdx$   & $0$ & $2$ & $xdx$           \\ \hline
$0$ & $2$ & $2$ & $y_1^2x^2dx$ & $1$ & $0$ & $y_1^2dx$       \\ \hline
$1$ & $0$ & $0$ & $y_2dx$      & $0$ & $1$ & $y_1^2dx$       \\ \hline
$1$ & $0$ & $1$ & $y_2xdx$     & $0$ & $2$ & $y_1xdx$        \\ \hline
$1$ & $1$ & $0$ & $y_2y_1dx$   & $0$ & $1$ & $y_2dx$         \\ \hline
$1$ & $1$ & $1$ & $y_2y_1xdx$  & $0$ & $2$ & $y_1^2xdx$      \\ \hline
$1$ & $2$ & $0$ & $y_2y_1^2dx$ & $0$ & $1$ & $y_2y_1dx$      \\ \hline
$2$ & $0$ & $0$ & $y_2^2dx$    & $0$ & $1$ & $y_2y_1^2dx$    \\ \hline
\end{tabular}
\caption{The computation of key terms in the case $P=\infty$ and $d_\infty=2$.}
\end{table}

Note that $12$ differentials have a key term, but there are only $9$ distinct key terms. The differentials outside $H_\infty$ and the pairs of differentials with the same key term require extra attention.
\begin{align*}
    \C(dx) &= 0 \\
    \C(xdx) &= 0 \\
    \C(x^2 dx) &= dx \\
    \C(x^3 dx) &= 0 \\
    \C(y_1dx) &= -\C(f(x)dx) \in \Span_k \{dx, x_Q dx \; | \; Q \in B_1 \setminus \{ \infty\} \} \\
    \C(y_1xdx) &= - \C(xf(x)dx) \in \Span_k \{dx, x_Q dx \; | \; Q \in B_1 \setminus \{ \infty\} \} \\
    \C(y_1 x^2 dx) &= y_1 dx - \C(x^2f(x) dx) \in \Span_k \{dx, y_1 dx, x_Q dx \; | \; Q \in B_1 \setminus \{ \infty \} \} \\
    \C(y_1^2 dx) &= y_1\C(f(x)dx) + \C(f(x)^2dx) \in \Span_k \{dx, y_1 dx, x_Qdx, y_1x_Q dx \; | \; Q \in B_1 \setminus \{ \infty \} \} \\ 
    &\hspace{5mm}  \oplus \Span_k \{x_Q^2dx \: | \; Q \in B_1 \setminus \{ \infty\}, d_Q=2\} \\
    \C(y_1^2x^2dx) &=y_1^2 \C(x^2 dx) + y_1 \C(x^2 f(x)dx) + \C(x^2f(x)^2dx) \\
    &\in \Span_k \{dx, xdx, y_1dx, y_1^2dx, x_Q dx, y_1x_Q dx, \; | \; Q \in B_1 \setminus \{ \infty \} \} 
    \\ &\hspace{5mm} \oplus \Span_k\{x_Q^2dx \; | \; Q \in B_1 \setminus \{ \infty\} , d_Q=2 \} \\
    \C(y_2 dx) &= \C(y_1^7 dx) - \C(y_1^5 dx) - \C(h(x)dx) \\ 
    &= y_1^2 \C(y_1dx) - y_1 \C( y_1^2 dx) - \C(h(x)dx) \\ 
    &\in \Span_k \{dx, xdx, y_1 dx, y_1^2dx, x_Qdx, y_1x_Q dx, y_1^2x_Q dx \; | \; Q \in B_1 \setminus \{ \infty \} \} \\ 
    &\hspace{5mm} \oplus \Span_k \{x_Q^2dx, y_1 x_Q^2 dx \; | \; Q \in B_1 \setminus \{ \infty \}, d_Q = 2\} \\
    &\hspace{5mm} \oplus \Span_k \{x_R dx \; | \; R \in B_2 \setminus B_1\}.
\end{align*}

\subsection{A pole of \texorpdfstring{$f$}{} of order \texorpdfstring{$2$}{} away from infinity} \label{subsec:Pnotinfd=2}

We now consider the case when $f$ has a pole at $P \neq \infty$ of order $d_P=2$. The basis is
$$\begin{aligned}
    W_P=\begin{cases} y_2^{a_2} y_1^{a_1} x_P^v dx \; | \; 0 \leq a_1,a_2 <3, \\  
       0 < 9 v \leq  6(2-a_1) + 14(2-a_2)+8 \end{cases}  \Bigg\}.
\end{aligned}$$

The $24$ differentials and their key terms, if they exist, are compiled in Table~\ref{tablep=3Pnotinfd=2}.

\begin{table}[h] \label{tablep=3Pnotinfd=2}
\centering
\begin{tabular}{|l|l|l|l|l|l|l|l|l|}
$a_2$ & $a_1$ & $v$ & $\omega$   & $\al_P(v)$ & $\be_P(v)$ & $\ka(\C(\om))$                              \\ \hline
$0$ & $0$ & $1$ & $x_Pdx$          & $0$ & $0$ & $x_Pdx$         \\ \hline
$0$ & $0$ & $2$ & $x_P^2dx$        & $0$ & $1$ &                 \\ \hline
$0$ & $0$ & $3$ & $x_P^3dx$        & $0$ & $2$ &                 \\ \hline
$0$ & $0$ & $4$ & $x_P^4dx$        & $1$ & $0$ & $x_P^2dx$       \\ \hline
$0$ & $0$ & $5$ & $x_P^5dx$        & $1$ & $1$ &                 \\ \hline
$0$ & $1$ & $1$ & $y_1x_Pdx$       & $0$ & $0$ & $y_1x_Pdx$      \\ \hline
$0$ & $1$ & $2$ & $y_1x_P^2dx$     & $0$ & $1$ & $x_P^2dx$       \\ \hline
$0$ & $1$ & $3$ & $y_1x_P^3dx$     & $0$ & $2$ &                 \\ \hline
$0$ & $1$ & $4$ & $y_1x_P^4dx$     & $1$ & $0$ & $y_1x_P^2dx$    \\ \hline
$0$ & $2$ & $1$ & $y_1^2x_Pdx$     & $0$ & $0$ & $y_1^2x_Pdx$    \\ \hline
$0$ & $2$ & $2$ & $y_1^2x_P^2dx$   & $0$ & $1$ & $y_1x_P^2dx$    \\ \hline
$0$ & $2$ & $3$ & $y_1^2x_P^3dx$   & $0$ & $2$ & $x_P^3dx$       \\ \hline
$0$ & $2$ & $4$ & $y_1^2x_P^4dx$   & $1$ & $0$ & $y_1^2x_P^2dx$  \\ \hline
$1$ & $0$ & $1$ & $y_2x_Pdx$       & $0$ & $0$ & $y_2x_Pdx$      \\ \hline
$1$ & $0$ & $2$ & $y_2x_P^2dx$     & $0$ & $1$ & $y_1^2x_P^2dx$  \\ \hline
$1$ & $0$ & $3$ & $y_2x_P^3dx$     & $0$ & $2$ & $y_1x_P^3dx$    \\ \hline
$1$ & $1$ & $1$ & $y_2y_1x_Pdx$    & $0$ & $0$ & $y_2y_1x_Pdx$   \\ \hline
$1$ & $1$ & $2$ & $y_2y_1x_P^2dx$ & $0$ & $1$ & $y_2x_P^2dx$    \\ \hline
$1$ & $1$ & $3$ & $y_2y_1x_P^3dx$ & $0$ & $2$ & $y_1^2x_P^3dx$  \\ \hline
$1$ & $2$ & $1$ & $y_2y_1^2x_Pdx$   & $0$ & $0$ & $y_2y_1^2x_Pdx$ \\ \hline
$1$ & $2$ & $2$ & $y_2y_1^2x_P^2dx$ & $0$ & $1$ & $y_2y_1x_P^2dx$ \\ \hline
$2$ & $0$ & $1$ & $y_2^2x_Pdx$     & $0$ & $0$ & $y_2^2x_Pdx$    \\ \hline
$2$ & $0$ & $2$ & $y_2^2x_P^2dx$   & $0$ & $1$ & $y_2y_1^2x_P^2dx$ \\ \hline
$2$ & $1$ & $1$ & $y_2^2y_1x_Pdx$  & $0$ & $0$ & $y_2^2y_1x_Pdx$ \\ \hline
\end{tabular}
\caption{The computation of key terms in the case $P \in B_1 \setminus \{\infty\}$ and $d_P=2$.}
\end{table}

The table displays that $20$ differentials have a key term, but there are only $17$ distinct key terms. The differentials without a key term and the pairs of differentials with the same key term require extra attention.
\begin{align*}
    \C(x_P^2 dx)      &= 0 \\
    \C(x_P^3 dx)      &= 0 \\ 
    \C(x_P^4 dx)      &= x_P^2dx \\
    \C(x_P^5 dx)      &= 0 \\
    \C(y_1x_P^2dx)    &= y_1 \C(x_P^2 dx) - \C(x_P^2f(x) dx) \\ 
    & \hspace{-16mm} \in \begin{cases} 
    \Span_k \{ x_P dx, x_P^2 dx, x_Q dx \; | \; Q \in B_1 \setminus \{P, \infty\} \} &\hbox{if $d_\infty=1$} \\
    \Span_k \{dx,  x_P dx, x_P^2 dx, x_Q dx \; | \; Q \in B_1 \setminus \{P, \infty\} \} &\hbox{if $d_\infty=2$}
    \end{cases} \\
    \C(y_1x_P^3 dx) &= y_1 \C(x_P^3 dx) - \C(x_P^3f(x) dx) \\
    & \hspace{-16mm} \in \begin{cases} 
    \Span_k \{ x_P dx, x_P^2dx, x_Q dx \; | \; \in B_1 \setminus \{P, \infty \} \} &\hbox{if $d_\infty=1$} \\
    \Span_k \{dx,  x_P dx, x_P^2 dx, x_Q dx \; | \; Q \in B_1 \setminus \{P,\infty\}\} &\hbox{if $d_\infty=2$}
    \end{cases} \\
    \C(y_1x_P^4 dx) &= y_1 \C(x_P^4 dx)- \C(x_P^4 f(x) dx) \\
    &\hspace{-16mm} \in \begin{cases} 
    \Span_k \{ y_1x_P^2dx, x_P dx, x_P^2 dx, x_Q dx \; | \; Q \in B_1 \setminus \{P, \infty \} \} &\hbox{if $d_\infty=1$} \\
    \Span_k \{ y_1x_P^2dx, dx, x_P dx, x_P^2 dx, x_Q dx \; | \; Q \in B_1 \setminus \{P,\infty\} \} &\hbox{if $d_\infty=2$}
    \end{cases} \end{align*} \begin{align*}
    \C(y_1^2x_P^2dx) &= y_1^2\C(x_P^2 dx) + y_1 \C(x_P^2 f(x) dx) + \C(x_P^2 f(x)^2 dx) \\
    &\hspace{-16mm}\in \begin{cases} 
    \Span_k \{ y_1x_P^2dx, dx, x_P dx, x_P^2 dx, y_1x_Pdx, x_Q dx, y_1x_Qdx \; | \; Q \in B_1 \setminus \{P, \infty \} \}  \\ 
    \oplus \Span_k \{ x_Q^2dx \; Q \in B_1 \setminus \{P, \infty \} , d_Q=2 \} \hspace{20mm} \hbox{if $d_\infty=1$} \\
    \Span_k \{ y_1x_P^2dx, dx, y_1dx, x_P dx, x_P^2 dx, y_1x_Pdx, x_Q dx, y_1x_Qdx \; | \; Q \in B_1 \setminus \{P, \infty \}\} \\ 
    \oplus \Span_k \{ x_Q^2dx \; Q \in B_1 \setminus \{P, \infty\} , d_Q=2 \}  \hspace{20mm} \hbox{if $d_\infty=2$}
    \end{cases} \\
    \C(y_1^2x_P^4dx) &= y_1^2 \C(x_P^4dx) + y_1 \C(x_P^4f(x)dx) + \C(x_P^4 f(x)^2)dx \\
    &\hspace{-16mm} \in \begin{cases} 
    \Span_k \{ y_1^2x_P^2dx, dx, x_Pdx, x_P^2dx, x_P^3dx, y_1x_P^2dx, x_Qdx, y_1x_Qdx \; | \; Q \in B_1 \setminus \{P, \infty \} \} \\
    \oplus \Span_k\{x_Q^2 dx \; | \; Q\in B_1 \setminus \{P, \infty \},  d_Q = 2 \} \hspace{20mm} \hbox{if $d_\infty=1$} \\
    \Span_k \{ y_1^2x_P^2dx, dx, y_1dx, x_Pdx, x_P^2dx, x_P^3 dx, y_1x_P^2, x_Qdx, y_1x_Qdx \; | \; Q \in B_1 \setminus \{P, \infty \} \} \\ 
    \oplus \Span_k \{ x_Q^2 dx \; | \; Q\in B_1 \setminus \{P, \infty \}, d_Q =2 \} \hspace{20mm} \hbox{if $d_\infty=2$} 
    \end{cases} \end{align*} \begin{align*}
    \C(y_2x_P^2dx) &= y_1^2 \C( y_1 x_P^2 dx) - y_1 \C(y_1^2x_P^2dx) - \C(x_P^2 h(x) dx) \\ 
    & \hspace{-16mm} \in \begin{cases} 
    \Span_k \{ y_1^2x_P^2dx, dx, y_1dx, x_Pdx, x_p^2dx, x_P^3dx, y_1x_Pdx, y_1x_P^2dx, y_1^2x_Pdx \} \\
    \oplus \Span_k \{x_Qdx, y_1x_Q dx, y_1^2x_Qdx \; | \; Q \in B_1 \setminus \{P, \infty \} \} \oplus \Span_k \{x_Rdx \: | \; R \in B_2 \setminus B_1\} \\ 
    \oplus \Span_k \{x_Q^2dx, y_1x_Q^2 dx \; | \; Q\in B_1 \setminus \{P, \infty\},  d_Q = 2 \} \hspace{20mm} \hbox{if $d_\infty=1$} \\
    \Span_k \{ y_1^2x_P^2dx, dx, xdx, y_1dx, y_1^2dx, x_Pdx, x_p^2dx, x_P^3dx, y_1x_Pdx, y_1x_P^2dx, y_1^2x_Pdx \} \\
    \oplus \Span_k \{x_Qdx, y_1x_Q dx, y_1^2x_Qdx \; | \; Q \in B_1 \setminus \{P, \infty\} \} \oplus \Span_k \{x_Rdx \: | \; R \in B_2 \setminus B_1\} \\
    \oplus \Span_k \{x_Q^2dx, y_1x_Q^2 dx \; | \; Q\in B_1 \setminus \{P,\infty\},  d_Q = 2 \}  \hspace{20mm} \hbox{if $d_\infty=2$.} 
    \end{cases}
\end{align*}

\subsection{A pole of \texorpdfstring{$h$}{} of order \texorpdfstring{$1$}{}} \label{subsec:eP=1}

Assume $P \in B_2 \setminus B_1$ and $e_P=1$, meaning $h(x)$ has a pole of order $1$ at $P$. The basis $W_P$ is given by
$$\begin{aligned}
    W_P=\begin{cases} y_2^{a_2} y_1^{a_1} x_P^v dx \; | \; 0 \leq a_1,a_2 <3, \\  
       0 < 3v \leq  4-a_2 \end{cases}  \Bigg\}.
\end{aligned}$$

The value of $a_1$ plays no role in whether $\om$ is regular and in the key term $\ka(\C(\om))$ as defined in Definition~\ref{defktpoleh}. In Table~\ref{tablep=3Pnotinfd=1e=3}, the three possibilities $0 \leq a_1 \leq 2$ are combined into one line. 

\begin{table}[h] \label{tablep=3Pnotinfd=1e=3}
\centering
\begin{tabular}{|l|l|l|l|l|l|l|l|l|}
$a_2$ & $v$ & $\omega$   & $\al_P(v)$ & $\be_P(v)$ & $\ka(\C(\om))$                              \\ \hline
$0$ & $1$ & $y_1^{a_1}x_Pdx$         & $0$ & $0$ & $y_1^{a_1}x_P dx$          \\ \hline
$1$ & $1$ & $y_2y_1^{a_1}x_P dx$      & $0$ & $0$ & $y_2y_1^{a_1}x_P dx$       \\ \hline
\end{tabular}
\caption{The computation of key terms in the case $P \in B_2 \setminus B_1$, and $e_P=1$.}
\end{table}

Observe that these six differentials all have a unique key term.

\subsection{A pole of \texorpdfstring{$h$}{} of order \texorpdfstring{$2$}{}} \label{subsec:eP=2}

Assume $P \in B_2 \setminus B_1$ and $e_P=2$, meaning $h(x)$ has a pole of order $2$ at $P$. The basis is
$$\begin{aligned}
    W_P=\begin{cases} y_2^{a_2} y_1^{a_1} x_P^v dx \; | \; 0 \leq a_1,a_2 <3, \\  
       0 < 3 v \leq  6-2a_2 \end{cases}  \Bigg\}.
\end{aligned}$$

The differentials in $W_P$ with their key terms, if they exist, are presented in Table~\ref{tablep=3Pnotinfd=2e=3}. The basis $W_P$ consists of nine differentials and six of them have a key term. All of the key terms are unique. Again, $a_1$ can take any value in $\{0,1,2\}$.

\begin{table}[h] \label{tablep=3Pnotinfd=2e=3}
\centering
\begin{tabular}{|l|l|l|l|l|l|l|l|l|}
$a_2$ & $v$ & $\omega$   & $\al_P(v)$ & $\be_P(v)$ & $\ka(\C(\om))$                              \\ \hline
$0$ & $1$ & $y_1^{a_1}x_Pdx$         & $0$ & $0$ & $y_1^{a_1}x_P dx$          \\ \hline
$0$ & $2$ & $y_1^{a_1}x_P^2 dx$        & $0$ & $1$ &          \\ \hline
$1$ & $1$ & $y_2y_1^{a_1}x_P dx$      & $0$ & $0$ & $y_2y_1^{a_1}x_P dx$            \\ \hline
\end{tabular}
\caption{The computation of key terms in the case $P \in B_2 \setminus B_1$ and $e_P = 2$.}
\end{table}

The differentials without a key term require extra attention.
\begin{align*}
\C(x_P^2dx) &= 0 \\
\C(y_1x_P^2dx) &= y_1\C(x_P^2dx) - \C(f(x)x_P^2dx) \\
& \in \begin{cases}
    \Span_k\{x_P dx\} \oplus \Span_k \{x_Q dx \; | \; Q \in B_1 \setminus \{\infty\}\} &\hbox{if $d_\infty = 1$} \\
    \Span_k\{dx,x_P dx\} \oplus \Span_k \{x_Q dx \; | \; Q \in B_1 \setminus \{\infty\}\}  &\hbox{if $d_\infty = 2$} 
\end{cases} 
\end{align*}
\begin{align*}
\C(y_1^2x_P^2dx)&= y_1^2\C(x_P^2dx) + y_1 \C(f(x)x_P^2dx) +\C(f(x)^2x_P^2dx) \\
&\in \begin{cases}
\Span_k \{dx,x_Pdx,y_1x_Pdx\} \oplus \Span_k\{x_Qdx, y_1x_Qdx \; | \; Q \in B_1 \setminus \{\infty\} \}  \\
\oplus \Span_k\{x_Q^2dx \; | \; Q \in B_1 \setminus \{\infty\}, d_Q = 2\} \hspace{20mm} \hbox{if $d_\infty =1$} \\
\Span_k \{dx,y_1dx, x_Pdx, y_1x_Pdx\} \oplus \Span_k\{x_Qdx, y_1x_Qdx \; | \; Q \in B_1 \setminus \{\infty\} \}  \\
\oplus \Span_k\{x_Q^2dx \; | \; Q \in B_1 \setminus \{\infty\}, d_Q = 2\} \hspace{20mm} \hbox{if $d_\infty =2$.}
\end{cases}
\end{align*}

\subsection{The main theorem}

Let $Y_2 \to \PP^1$ be a $\Z/9\Z$-cover which is minimal in the sense of Definition~\ref{defnminimalcyclic}. We now prove a formula for the $a$-number of $Y_2$, implying that Conjecture~\ref{conjnumberminimal} holds under the assumptions $p=3$ and $n=2$.

\begin{theorem} \label{thmp=3}
    Suppose $Y_2 \xrightarrow{} \mathbb{P}^1$ has branching datum
\begin{equation*}
  \left[
\begin{array}{rrrrrrrr}
2&  \ldots  & 3 & \ldots & 0 & \ldots & 0 & \ldots \\ 
\undermat{n_1}{4 &  \ldots} & \undermat{n_2}{7 &  \ldots} &  \undermat{n_3}{2 &  \ldots}  &  \undermat{n_4}{{3} &  \ldots}\\ 
\end{array} 
\right]^{\intercal}
\end{equation*}
\vspace{3mm}

\noindent Then the $a$-number of $Y_2$ satisfies the formula
\begin{equation*}
    a_{Y_2}=3n_1+7n_2+0n_3+3n_4.
\end{equation*}
\end{theorem}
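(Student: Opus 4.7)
The plan is to combine three ingredients: the genus formula from Equation~(2.10), the conditional upper bound in Theorem~\ref{thmrkCgeqK}, and a careful check that this bound is sharp using the explicit computations in Sections~\ref{subsec:P=infd=1}--\ref{subsec:eP=2}.

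First, I compute $g_{Y_2}$. Reading off the branching datum, $\sum_j e_{j,1} = 2n_1 + 3n_2$ and $\sum_j e_{j,2} = 4n_1 + 7n_2 + 2n_3 + 3n_4$, so Equation~(2.10) yields $g_{Y_2} = -8 + 14n_1 + 24n_2 + 6n_3 + 9n_4$.

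Second, I verify that Theorem~\ref{thmrkCgeqK} applies and tally $\#K$. In characteristic $3$ we have $\be_P(v) \in \{0,1,2\}$: if $\be_P(v) = 0$ then $a_1 \ge \be_P(v)$ automatically, and otherwise $\be_P(v) \in \{p-2,p-1\}$, so Lemma~\ref{lemcomnon-zero} gives $c_\om \ne 0$ at poles of $f$; at poles of $h$, Lemma~\ref{lemcompoleh} applies unconditionally. Summing distinct key term counts from the six tables, a finite point $P \in B_1$ with $d_P \in \{1,2\}$ contributes $11$ or $17$, a point $P \in B_2 \setminus B_1$ with $e_P \in \{1,2\}$ contributes $6$, and $\infty$ contributes $3$ or $9$ depending on $d_\infty$. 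Either choice for $d_\infty$ produces $\#K = 11n_1 + 17n_2 + 6n_3 + 6n_4 - 8$, so Theorem~\ref{thmrkCgeqK} yields $a_{Y_2} \le g_{Y_2} - \#K = 3n_1 + 7n_2 + 3n_4$.

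Third, I prove the reverse inequality. Fix $H' \subseteq H$ realising each key term exactly once; then $\C|_{\Span_k H'}$ has rank $\#K$. It suffices to establish $\im(\C) = \C(\Span_k H')$. For each leftover differential $\om \in W \setminus H'$, the explicit expression for $\C(\om)$ recorded in the appropriate subsection writes $\C(\om)$ as a linear combination of basis differentials in $W$. I iteratively cancel these terms by subtracting multiples of $\C(\om')$ for $\om' \in H'$, using the upper-triangular structure with respect to the order $\prec$ (Lemma~\ref{lemktzero}) and the non-vanishing of the leading coefficients (Lemma~\ref{lemktcom} together with Lemma~\ref{lemcomnon-zero}). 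This produces $\om'' \in \Span_k H'$ with $\om - \om'' \in \ker(\C)$; the resulting $|W \setminus H'| = 3n_1 + 7n_2 + 3n_4$ kernel elements are linearly independent because their $\om$-coefficients form the identity on the leftover basis, yielding $a_{Y_2} \ge 3n_1 + 7n_2 + 3n_4$. The main obstacle is this third step: the subsections have been arranged so that the only basis differentials appearing in $\C(\om)$ for a leftover $\om$ are either key terms or simpler leftovers already analysed, which is exactly what allows the iterative cancellation to terminate. Carrying this out rigorously across all six configurations is the technical heart of the argument.
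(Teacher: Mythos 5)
Your first two steps coincide with the paper's proof and are correct: the genus count, the tally $\#K = 11n_1+17n_2+6n_3+6n_4-8$, the observation that Lemma~\ref{lemcomnon-zero} (via $\be_P(v)\in\{0,1,2\}$) and Lemma~\ref{lemcompoleh} guarantee the hypothesis of Theorem~\ref{thmrkCgeqK} when $p=3$, and the resulting bound $a_{Y_2}\le 3n_1+7n_2+3n_4$. The gap is the third step, which you yourself defer as ``the technical heart.'' The mechanism you offer there --- cancel the terms of $\C(\om)$, for each leftover $\om\in W\setminus H'$, by subtracting multiples of $\C(\om')$ with $\om'\in H'$, justified by the $\prec$-triangularity of Lemma~\ref{lemktzero} and the nonvanishing of the $c_{\om'}$ --- does not suffice as stated. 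Lemma~\ref{lemktzero} controls only the coefficients of \emph{key terms} in $\C(\om')$; it says nothing about the remaining basis components of $\C(\om')$, and those components are genuinely present and need not lie in $\Span_k K$ at all (Remarks~\ref{rmk:imagenotspan} and \ref{rmk:p=3imagenotkeyterm} record that $\im(\C)\not\subseteq\Span_k K$ already for $p=3$). Hence each cancellation step can introduce new non-key components for which the triangular structure provides no control, and your assertion that ``the only basis differentials appearing in $\C(\om)$ for a leftover $\om$ are either key terms or simpler leftovers already analysed'' is exactly the statement that has to be proved, not a consequence of how the subsections are arranged. For instance, for the leftover $y_2x_P^2dx$ with $d_P=2$, the expression for $\C(y_2x_P^2dx)$ in Section~\ref{subsec:Pnotinfd=2} involves $y_1dx$, $x_P^3dx$, $y_1x_Q^2dx$ and $x_Rdx$, and one must show each of these lies in $\im(\C)$ \emph{independently} of the two differentials sharing the key term $y_1^2x_P^2dx$.

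Establishing that is precisely the bulk of the paper's proof: it fixes an order in which the six local configurations are treated and, for every such extra component, produces an explicit auxiliary computation that is not among the leftover computations of Sections~\ref{subsec:P=infd=1}--\ref{subsec:eP=2}, e.g.\ $\C(x_Q^4dx)=x_Q^2dx$, $\C(y_1x_Qdx)$, $\C(y_1^2x_Qdx)$, $\C(y_1x_Q^4dx)$, $\C(y_1^2x_P^3dx)$, and, in the case $d_\infty=1$, the computations of $\C(y_1^2dx)$ and $\C(y_2dx)$ needed to show $y_1dx\in\im(\C)$. Without carrying out this dependency analysis (or supplying some substitute for it), the inequality $a_{Y_2}\ge 3n_1+7n_2+3n_4$ is not established; as written, your proposal is an outline of the paper's argument with its hardest part asserted rather than proved.
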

\begin{proof}
We apply Theorem~\ref{thmrkCgeqK} to obtain a lower bound for $\rk(\C)$ and hence an upper bound for $a_{Y_2}$. Combining Sections \ref{subsec:P=infd=1}, \ref{subsec:Pnotinfd=1}, \ref{subsec:P=infd=2} and \ref{subsec:Pnotinfd=2}, we find the number of distinct key terms is 
$$\#K = 11 n_1 + 17 n_2 + 6n_3+6n_4 -8.$$
The correcting factor $-8$ comes from the pole at infinity. Applying Theorem~\ref{thmrkCgeqK}, using $g_{Y_2}=\# W$, yields
\begin{align*}
a_{Y_2} &\leq g_{Y_2} - \#K \\ 
&= 3n_1+7n_2+0n_3+3n_4.
\end{align*}
The remainder of the proof revolves around showing that this upper bound is sharp. We do so by showing two things. First, we show that differentials which have no key term do not contribute to the rank of $\C$. Second, we show that differentials with the same key term together contribute $1$ to the rank of $\C$, such that the rank of $\C$ is exactly the number of key terms. We do this locally at the points in $B_2$, by treating the six cases corresponding to Sections~\ref{subsec:P=infd=1}, \ref{subsec:Pnotinfd=1}, \ref{subsec:P=infd=2}, \ref{subsec:Pnotinfd=2}, \ref{subsec:eP=1} and \ref{subsec:eP=2}.

We begin with the differentials in $W_\infty$ in the case $d_\infty=1$, as described in Section \ref{subsec:P=infd=1}. All the key terms occur uniquely, so we treat only the differentials that have no key term. As the differentials $dx$ and $xdx$ are killed by $\C$, they clearly don't contribute to the rank. Next, we consider $y_1dx$, which also does not have a key term. We have
$$\C(y_1 dx) \in \Span_k \{ x_Q dx \; | \; Q \neq \infty \}.$$
However, for $Q\neq \infty$, we have $\C(x_Qdx) = x_Q dx$ and $x_Q dx$ does have a key term. Hence $y_1dx$ also does not contribute to the rank of $\C$. Thus we have exhibited $3$ linearly independent differentials that don't contribute to $\rk(\C)$, meaning they contribute to $a_{Y_2}$.

Second, we consider the case $P\in B_1 \setminus \{\infty\}$ and $d_P=1$. Section \ref{subsec:Pnotinfd=1} shows that all the key terms are unique. The differentials $x_P^2dx$, $x_P^3dx$ and $y_1x_P^2dx$ have no key term. The differentials $x_P^2dx$ and $x_P^3dx$ are killed by $\C$ and therefore don't contribute to $\rk(\C)$. Furthermore, we have computed
$$ \C(y_1 x_P^2 dx)\in \begin{cases} 
    \Span_k \{x_P dx, x_Q dx \; | \; P \neq Q \neq \infty\} &\hbox{if $d_\infty = 1$} \\
    \Span_k \{dx, x_P dx, x_Q dx \; | \; P \neq Q \neq \infty\} &\hbox{if $d_\infty = 2$}
\end{cases}$$
In the case $d_\infty=1$, we have already shown that this does not contribute to $\rk(\C)$. In the case $d_\infty=2$, we have $\C(x^2dx)=dx$. In both cases $y_1 x_P^2 dx$ does not contribute to $\rk(\C)$. Again, the differentials in $W_P$ contribute exactly $3$ to $a_{Y_2}$.

Third, we consider the differentials in $W_\infty$ in the case $d_\infty=2$. As shown in Section~\ref{subsec:P=infd=2}, the differentials $dx$, $xdx$ and $x^3dx$ are killed by $\C$. $x^2dx$ and $y_1dx$ both have the key term $dx$. We've seen that $\C(x^2dx)=dx$ and $$\C(y_1dx) \in \Span_k \{ dx,x_Q dx \; | \; Q \in B_1 \setminus \{\infty\}\}.$$ We have already established that $$\Span_k \{x_Qdx \; | \; Q \in B_1 \setminus \{\infty\}\} \subset \im (\C),$$ so the differentials $x^2dx$ and $y_1dx$ together contribute $1$ to $\rk(\C)$. It then immediately follows that $y_1x dx$, which has no key term, does not contribute to $\rk(\C)$, since we have computed $$\C(y_1 xdx) \in \Span_k \{dx, x_Q dx \; | \; Q \in B_1 \setminus \{\infty\}\}.$$ Next, the two differentials $y_1x^2dx$ and $y_1^2dx$ both have the key term $y_1dx$. We've computed
\begin{align*}
    \C(y_1x^2 dx) & \in \Span_k \{dx, y_1dx, x_Qdx \; | \; Q \in B_1 \setminus\{ \infty\} \} \\ 
    \C(y_1^2 x dx) & \in \Span_k \{ dx, y_1dx, x_Qdx, y_1x_Qdx \; | \; Q \in B_1 \setminus \{ \infty\} \}  \\
    &\hspace{5mm} \oplus \Span_k \{x_Q^2 dx \; | \; Q \in B_1 \setminus \{ \infty\}, d_Q=2\}. 
\end{align*}
We show that they contribute $1$ to $\rk(\C)$ together, by showing that $y_1x_Qdx$ and $x_Q^2 dx$ (for $d_Q=2$) are already in the image of $\C$. We compute
\begin{align*}
    \C(y_1 x_Q dx)&= y_1 x_Q dx - \C(x_Q f(x) dx)  \\
    &\in \Span_k \{ dx, y_1x_Qdx, x_P dx \; | \; P \in B_1 \setminus \{ \infty\} \} \\
    \C(x_Q^4 dx) &= x_Q^2 dx.
\end{align*}
The second computation assumes $d_Q=2$, so that $x_Q^4dx$ is indeed regular. These computations together imply that the differentials $y_1x^2dx$ and $y_1^2dx$, which have the same key term, together contribute $1$ to $\rk(\C)$. Finally, we consider the differentials $y_1^2x^2dx$ and $y_2dx$, which both have the key term $y_1^2dx$. In Section \ref{subsec:P=infd=2}, we have established
\begin{align*}
    \C(y_1^2x^2dx) &\in \Span_k \{dx, xdx, y_1dx, y_1^2dx, x_Q dx, y_1x_Q dx, \; | \; Q \in B_1 \setminus \{ \infty \} \} . 
    \\ &\hspace{5mm} \oplus \Span_k\{x_Q^2dx \; | \; Q \in B_1 \setminus \{ \infty \}, d_Q=2 \} \\
    \C(y_2 dx) &\in \Span_k \{dx, xdx, y_1 dx, y_1^2dx, x_Qdx, y_1x_Q dx, y_1^2x_Q dx \; | \; Q \in B_1 \setminus \{ \infty \} \} \\ 
    &\hspace{5mm} \oplus \Span_k \{x_Q^2dx, y_1 x_Q^2 dx \; | \; Q \in B_1 \setminus \{ \infty \}, d_Q = 2\} \\
    &\hspace{5mm} \oplus \Span_k \{x_R dx \; | \; R \in B_2 \setminus B_1\}.
\end{align*}
It remains to be shown that $xdx$,  $y_1^2x_Q dx$, $y_1 x_Q^2 dx$ (in the case $d_Q=2$) and $x_R dx$ (with $R \in B_2 \setminus B_1$) already lie in the image of $\C$. We compute
\begin{align*}
    \C(y_1^2 x dx) &= y_1^2 \C(xdx) + y_1 \C(xf(x)dx) + \C(xf(x)^2 dx) \\ 
    &\in \Span_k \{ dx, xdx, y_1 dx, x_Q dx, y_1 x_Q dx \; | \; Q \in B_1 \setminus \{ \infty\} \}  \\ 
    & \hspace{5mm} \oplus \Span_k \{ x_Q^2 dx \; | \; Q \in B_1 \setminus \{\infty\}, d_Q =2 \} \end{align*} \begin{align*}
    \C(y_1^2 x_Q dx) &= y_1^2x_Q dx + y_1 \C(x_Q f(x) dx) + \C(x_Q f(x)^2 dx) \\
    & \in \Span_k \{y_1^2x_Qdx, dx, y_1dx, x_Pdx, y_1x_Pdx \; | \; P \in B_1 \setminus \{\infty\} \} \\
    &\hspace{5mm} \oplus \Span_k \{ x_P^2 dx \; | \; P \in B_1 \setminus \{\infty\}, d_P=2 \} \\
    \C(y_1 x_Q^4 dx) &= y_1x_Q^2 dx - \C(x_Q^4 f(x) dx) \\ 
    &\in \Span_k \{ y_1x_Q^2dx, dx, x_Q^2dx, x_P dx \; | \; P \in B_1 \setminus \{ \infty \} \} \\
    \C(x_R dx) &= x_R dx.
\end{align*}
These computations verify that indeed $xdx$, $y_1^2x_Qdx$, $y_1x_Q^2dx$ and $x_Rdx$ already lie in the image of $\C$, without the help of $y_1^2x^2dx$ and $y_2dx$. Thus the differentials $y_1^2x^2dx$ and $y_2dx$ together contribute $1$ to $\rk(\C)$.

As the fourth step, we treat the case where $P \in B_1 \setminus \{\infty\}$ and $d_P=2$. In Section~\ref{subsec:Pnotinfd=2}, we have seen that $x_P^2dx$, $x_P^3dx$ and $x_P^5 dx$ are killed by the Cartier operator. The differentials $x_P^4dx$ and $y_1x_P^2$ have the key term $x_P^2dx$, but the computations in Section~\ref{subsec:Pnotinfd=2}, together with computations earlier in this proof, imply that they together contribute $1$ to $\rk(\C)$. It then follows immediately that $y_1x_P^3$, which has no key term, does not contribute to $\rk(\C)$. Next we consider the differentials $y_1x_P^4dx$ and $y_1^2x_P^2dx$, which both have the key term $y_1x_P^2dx$. The computations in Section~\ref{subsec:Pnotinfd=2} show that they together contribute $1$ to $\rk(\C)$; they both extend the imagee of $\C$ only by $\Span_k \{y_1x_P^2dx\}$ and nothing else. Finally, we consider the differentials $y_1^2x_P^4dx$ and $y_2x_P^2dx$, which both have the key term $y_1^2x_P^2dx$. They introduce not only $y_1^2x_P^2dx$, but also $x_P^3dx$ and $y_1dx$. We show that these differentials already lie in the image of $\C$ independently. For $x_P^3dx$, we compute
\begin{align*}
    \C(y_1^2x_P^3) &= y_1^2 \C(x_P^3dx) + y_1 \C(x_P^3f(x)dx) + \C(x_P^3f(x)^2dx) \\ 
    & \in \begin{cases} 
    \Span_k \{x_P^3dx, dx, x_Pdx, x_P^2dx, y_1x_Pdx, y_1x_P^2dx  \} \\
    \oplus \Span_k \{x_Qdx, y_1x_Q dx \; | \; Q \in B_1 \setminus \{P, \infty\} \} \\
    \oplus \Span_k \{x_Q^2dx \; | \; Q \in B_1 \setminus \{ \infty \},  d_Q = 2 \} \hspace{30mm} \hbox{if $d_\infty=1$} \\
    \Span_k \{ x_P^3dx, dx, y_1dx, x_Pdx, x_P^2dx, y_1x_Pdx, y_1x_P^2dx  \} \\
    \oplus \Span_k \{x_Qdx, y_1x_Q dx, y_1^2x_Qdx \; | \; Q \in B_1 \setminus \{P, \infty\} \} \\
    \oplus \Span_k \{x_Q^2dx, y_1x_Q^2 dx \; | \; Q\in B_1 \setminus \{\infty\},  d_Q = 2 \}  \hspace{17mm} \hbox{if $d_\infty=2$} 
    \end{cases}
\end{align*}
Hence $x_P^3dx$ was already in the image of $\C$. It remains to be shown that $y_1dx$ is already in the image of $\C$ in the case $d_\infty=1$. We compute 
\begin{align*}
    \C(y_1^2 dx) &= y_1^2\C(dx) + y_1 \C(f(x)dx) + \C(f(x)^2dx) \\
    &\in \span \{dx, x_Qdx, y_1x_Q dx \: | \; Q \in B_1 \setminus \{ \infty \} \} \oplus \Span_k \{ x_Q^2 dx \; | \; Q \in B_1 \setminus \{ \infty\}, d_Q=2 \} \\
    \C(y_2dx) &= y_1^2 \C(dx) - y_1 \C(y_1^2 dx) - \C(h(x)dx) \\
    & \in \Span_k \{ dx, y_1dx, x_Q dx, y_1x_Qdx, y_1^2x_Qdx \; | \; Q \in B_1 \setminus \{\infty\} \} \\
    &\oplus \Span_k \{ x_Q^2dx, y_1x_Q^2dx \; | \; Q \in B_1 \setminus \{ \infty \}, d_Q=2 \}.
\end{align*}
This shows that $y_1dx$ was already in the image of $\C$. It follows that the differentials $y_1^2x_P^4dx$ and $y_2x_P^2dx$ together contribute $1$ to $\rk(\C)$, which concludes the fourth step.

As the fifth step, we treat the case $P \in B_2 \setminus B_1$ and $e_P=1$, meaning $h$ has a pole of order $1$. In that case, Section~\ref{subsec:eP=1} shows that the differentials in $W_P$ all have different key terms. Hence these differentials do not contribute to the $a$-number.

As the sixth and last step, we treat the case $P \in B_2 \setminus B_1$ and $e_P=2$, meaning $h$ has a pole of order $2$. Section~\ref{subsec:eP=2} shows that $6$ out of the $9$ differentials in $W_P$ have a key term, and these are all unique. This implies that the rank of the Cartier operator restricted to $\Span_k W_P$ is at least $6$. We now show that it's exactly $6$, by analyzing the basis differentials that do not have a key term. Since $\C(x_P^2 dx)=0$, the differential $x_P^2dx$ does not contribute to the rank of $\C$. Moving on to $y_1x_P^2dx$, recall
\begin{align*}
    \C(yx_P^2dx) &= y\C(x_P^2dx) - \C(f(x)x_P^2dx) \\
& \in \begin{cases}
    \Span_k\{x_P dx\} \oplus \Span_k \{x_Q dx \; | \; Q \in B_1 \setminus \{\infty\}\} &\hbox{if $d_\infty = 1$} \\
    \Span_k\{dx,x_P dx\} \oplus \Span_k \{x_Q dx \; | \; Q \in B_1 \setminus \{\infty\}\}  &\hbox{if $d_\infty = 2$} 
\end{cases}
\end{align*}
Since $\C(x_P dx) = x_Pdx$, these differentials are already in the image of $\C$. Hence $y_1x_P^2dx$ does not contribute to the rank of $\C$. Finally, we treat $y_1^2x_P^2dx$:
\begin{align*}
\C(y_1^2x_P^2dx)&= y_1^2\C(x_P^2dx) + y_1 \C(f(x)x_P^2dx) +\C(f(x)^2x_P^2dx) \\
&\in \begin{cases}
\Span_k \{dx,x_Pdx,y_1x_Pdx\} \oplus \Span_k\{x_Qdx, y_1x_Qdx \; | \; Q \in B_1 \setminus \{\infty\} \}  \\
\oplus \Span_k\{x_Q^2dx \; | \; Q \in B_1 \setminus \{\infty\}, d_Q = 2\} \hspace{20mm} \hbox{if $d_\infty =1$} \\
\Span_k \{dx,y_1 dx, x_Pdx, y_1 x_Pdx\} \oplus \Span_k\{x_Qdx, y_1 x_Qdx \; | \; Q \in B_1 \setminus \{\infty\} \}  \\
\oplus \Span_k\{x_Q^2dx \; | \; Q \in B_1 \setminus \{\infty\}, d_Q = 2\} \hspace{20mm} \hbox{if $d_\infty =2$.}
\end{cases}
\end{align*}
To show that this does not contribute to the rank of $\C$, we only need to show that $y_1 x_P dx$ was already in the image of $\C$. This is clear since
$$\C(y_1 x_P dx) = y_1x_Pdx.$$
Thus $P$ contributes $3$ to the $a$-number of $Y_2$.

We have treated all six cases of $W_P$, depending on the ramification at $P$ and on whether $P=\infty$. We have shown that a pole of $f$ of order $2$ contributes $7$ to the $a$-number, while a pole of $f$ of order $1$ and a pole of $h$ of order $2$ both contribute $3$ to the $a$-number. A pole of $h$ of order $1$ contributes nothing to the $a$-number. In conclusion, we've proved the formula
$$a_{Y_2} = 3n_1 + 7n_2 + 3n_4.$$
\end{proof}

\begin{remark} \label{rmk:p=3imagenotkeyterm}
Seeing this proof, it would be tempting to think there is an inclusion $\im(\C) \subseteq \Span_k K$, where $K$ is the set of key terms. However, this is not true, even in the case $p=3$. To show this, take the differential $y_2^2dx$, assuming $d_\infty=2$. The differential $\C(y_2^2dx)$ has a term $\C(h(x)^2dx)$. The rational function $h(x)$ has a pole of order at most $5$ at infinity. This means for a suitable choice of $h(x)$, we have a term $\C(x^8dx)=x^2dx$, which does not occur as a key term.

Since $\im(\C)$ and $\Span_k K$ have the same dimension, by virtue of Theorem~\ref{thmp=3}, the opposite inclusion also fails to hold.
\end{remark}

\bibliographystyle{alpha}
\bibliography{mybibcopy}

\end{document}